\DeclareMathOperator{\lcf}{lcf}
\def\MPB{{\mathbb{P}}}
\def\MQB{{\mathbb{Q}}}
\def\k{\kappa}
\def\l{\lambda}
\def\a{\alpha}
\def\b{\beta}
\newtheorem{theorem}{Theorem}[section]
\newtheorem{lemma}[theorem]{Lemma}
\newtheorem{proof of Theorem 2.22}[theorem]{Proof of Theorem 2.22}
\newtheorem{corollary}[theorem]{Corollary}
\newtheorem{definition}[theorem]{Definition}
\newtheorem{example}[theorem]{Example}
\newtheorem{remark}[theorem]{Remark}
\newtheorem{claim}[theorem]{Claim}
\numberwithin{equation}{section}
\def\l{\lambda}
\def\rmark{\mbox{$\rm\bf\rule{0.06em}{1.45ex}\kern-0.05em R$}}
\def\pmark{\mbox{$\rm\bf\rule{0.06em}{1.45ex}\kern-0.05em P$}}
\def\nmark{\mbox{$\rm\bf\rule{0.06em}{1.45ex}\kern-0.05em N$}}
\def\vdash{\mbox{$\rm\| \kern-0.13em -$}}
\newcommand{\lusim}[1]{\smash{\underset{\raisebox{1.2pt}[0cm][0cm]{$\sim$}}
{{#1}}}}
\begin{document}

\title[On cuts in ultraproducts of linear orders I]{On cuts in ultraproducts of  linear orders I}

\author[ M. Golshani and S. Shelah]{ Mohammad Golshani and Saharon Shelah}

\thanks{The first author's research was in part supported by a grant from IPM (No. 91030417). The second author's research has been partially supported by the European Research Council
grant 338821. This is publication 1075 of second author} \maketitle



\begin{abstract}
For an ultrafilter $D$ on a cardinal $\kappa,$ we wonder for which pair $(\theta_1, \theta_2)$ of regular cardinals,  we have: for any $(\theta_1+\theta_2)^+-$saturated dense linear order $J, J^{\kappa}/ D$ has a cut of cofinality $(\theta_1, \theta_2).$ We deal mainly with the case $\theta_1, \theta_2 > 2^\k.$
\end{abstract}

\section{introduction}
Let $D$ be an ultrafilter  on a cardinal $\kappa.$ We consider the class $\mathscr{C}(D)$ consisting of pairs $(\theta_1, \theta_2),$ where  $(\theta_1, \theta_2)$ is the cofinality of  a cut in $J^\k/D$ and $J$ is some (equivalently any) $(\theta_1+\theta_2)^+-$saturated dense linear order.  The works \cite{malliaris-shelah1}, \cite{malliaris-shelah2} and \cite{malliaris-shelah3} of Malliaris and Shelah have started  the study of this class for the case $\theta_1 + \theta_2 \leq 2^\k$.
In this paper we continue these works; in particular we will concentrate on the case where $\theta_1 + \theta_2$ is above $2^\k$.
As the results of the paper show, the study of the class $\mathscr{C}_{> 2^{\kappa}}(D)$
is very different from the case $\mathscr{C}_{\leq 2^{\kappa}}(D),$ and it is related to the universe of set theory we discuss in it. So most of our results are proved under some set theoretic assumptions, like the existence of large cardinals or the validity of some combinatorial principles, or
are considered in suitable generic extensions of the universe.
We also prove some results about the depth and depth$^+$ of Boolean algebras, which continue the works of Garti-Shelah \cite{garti-shelah1}, \cite{garti-shelah2}, \cite{garti-shelah3}, \cite{garti-shelah4} and Shelah \cite{shelah4}.

In particular, we  prove the following:
\begin{itemize}
\item  Suppose $\k_1>\aleph_0$ and $D$ is a  $\k_1-$complete (but not $\k_1^+-$complete) ultrafilter on $\k_2$ and $(\theta_1, \theta_2)\in \mathscr{C}(D).$ Then $\theta_1, \theta_2 >\k_1$ (Theorem 2.3).
\item Suppose that $D$ is an ultrafilter on $\kappa$, $\kappa<\mu \leq \lambda, \theta,$ where $\mu$ is a strongly compact cardinal and $\lambda, \theta$ are regular. Then $(\lambda, \theta)\notin \mathscr{C}(D)$ (Theorem 2.11).
\item Suppose that $D$ is an ultrafilter on $\kappa$ and $\theta, \lambda$ are regular cardinals such that $\theta^\kappa <\lambda.$ Then $(\lambda, \theta) \notin \mathscr{C}(D)$ (Theorem 2.16).
\item  Assume $V=L$, and  suppose that $D$ is a uniform ultrafilter on some cardinal $\k.$ Then $\mathscr{C}(D)$ is a proper class (Corollary 3.5).
\item If in $V$, there is a class of supercompact cardinals, then for some class forcing $\mathbb{P},$ in $V^\mathbb{P}$ we have: for any infinite cardinal $\k$, and  any ultrafilter $D$ on $\k,$ if $(\l_1, \l_2)\in \mathscr{C}(D),$ then $\l_1 +\l_2 < 2^{2^\k}$ (Theorem 5.21).
\end{itemize}
It follows from our results that the study of the class $\mathscr{C}(D)$ is closely related to large cardinals, and  combinatorial principles like square and diamond; so that in the presence of large cardinals, the class $\mathscr{C}(D)$ is small, while it is a proper class in all known core models like $L$.

We now give some of the main definitions that appear  in the paper.
\begin{definition}
A linear order $J$ is $\tau^+$-saturated, if for every subset $A$ of $J$ of size $\leq \tau$ and every type $\Gamma$ in the language of $J$
with parameters from $A$, if $\Gamma$ is finitely satisfiable, then $\Gamma$ is realized by some element of $J$.
\end{definition}
If $J$ is $\tau^+$-saturated and if $A$ and $B$ are subsets of $J$ of size $\leq \tau$ such that $a <_J b$ for all $a \in A$ and $b \in B,$
then it is easily seen that there are $s_1, s_2$ and $s_3 \in J$ such that for all $a \in A$ and $b \in B,~ s_1 <_J a <_J s_2 <_J b <_J s_3.$ To see this, consider the types $\Gamma_1 = \{ x <_J a : a \in A     \}$, $\Gamma_2 = \{a <_J x <_J b: a \in A$ and $b \in B    \}$ and $\Gamma_3 = \{b <_J x: b \in B    \}$.
They are easily seen to be finitely satisfiable, and hence by the saturation of $J$, they are realized by some $s_1, s_2$ and $s_3$ respectively. Then
$s_1, s_2$ and $s_3$ are as required.
\begin{definition}
Let $J$ be a linear order, and $C_1, C_2 \subseteq J.$

$(a)$ $(C_1, C_2)$ is a pre-cut of $J$ if  $C_1 <_J C_2$ (i.e. for all $s_1\in C_1$ and $s_2\in C_2, s_1 <_J s_2$),

$\hspace{.5cm}$ and there is no $t\in J$ such that $C_1 <_J t <_J C_2.$

$(b)$ $(C_1, C_2)$ is a cut of $J$, if it is a pre-cut of $J$ and $J=C_1 \cup C_2$.

$(c)$ For a pre-cut $(C_1, C_2)$ of $J$, let $cf(C_1, C_2)=(\theta_1, \theta_2)$ where
\begin{itemize}
\item $\theta_1$ is the cofinality of $C_1, \theta_1=cf(C_1),$
\item $\theta_2$ is the initiality (or downward cofinality) of $C_2, \theta_2= dcf(C_2)$.
\end{itemize}

$(d)$ Suppose $(C_1, C_2)$ is a pre-cut of $J$ and  $cf(C_1, C_2)=(\theta_1, \theta_2).$
\begin{itemize}
\item $\bar{s}$ witnesses $cf(C_1)=\theta_1,$ if $\bar{s}=\langle s_\alpha: \alpha<\theta_1  \rangle$ is $<_J-$increasing and unbounded in
 $C_1$.
\item $\bar{t}$ witnesses $dcf(C_2)=\theta_2,$ if $\bar{t}=\langle t_\b: \b<\theta_2  \rangle$ is $<_J-$decreasing and for any
 $t\in C_2$ there exists $\b<\theta_2$ such that $t_\b <_J t.$

\item $(\bar{s}, \bar{t})$ witnesses $cf(C_1, C_2)=(\theta_1, \theta_2),$ if $\bar{s}$ witnesses $cf(C_1)=\theta_1$ and $\bar{t}$ witnesses
$dcf(C_2)=\theta_2.$
\end{itemize}

\end{definition}
\begin{definition}
Suppose $D$ is an ultrafilter on a cardinal $\kappa.$ Then:

$(a)$ $\mathscr{C}(D)=\{ (\theta_1, \theta_2): (\theta_1, \theta_2)=cf(C_1, C_2)$ for some cut $(C_1, C_2)$ of $J^\kappa / D$, where $J$ is

$\hspace{.5cm}$ some (equivalently any) $(\theta_1+\theta_2)^+-$saturated dense linear order $ \}.$

 $(b)$ $\mathscr{C}_{\geq \lambda}(D)=\{ (\theta_1, \theta_2) \in \mathscr{C}(D): \theta_1+\theta_2 \geq \lambda \}.$

$(c)$  $\mathscr{C}_{\leq \lambda}(D)=\{ (\theta_1, \theta_2) \in \mathscr{C}(D): \theta_1+\theta_2 \leq \lambda \}.$

\end{definition}
\begin{remark}
$(a)$ $\mathscr{C}(D)$ is symmetric, i.e. $ (\theta_1, \theta_2)\in \mathscr{C}(D)  \Leftrightarrow    (\theta_2, \theta_1)\in \mathscr{C}(D),$ for all regular cardinals $\theta_1, \theta_2.$

$(b)$ There are $(\theta_1, \theta_2)-$cuts with $\theta_1\in \{0,1\},$ but they do not arise in our work, because if a $\l^+-$saturated dense linear order has a $(\theta_1, \theta_2)-$cut and $\theta_1\in \{0,1\},$ then $\theta_2 > \l.$

$(c)$ By ultrafilter, we always mean a non-principal ultrafilter.
\end{remark}
We also use the following combinatorial principles that are valid in known core models, and  will use them to show that the class $\mathscr{C}(D)$
can be large.
\begin{definition}
Assume $\kappa$ is a regular uncountable cardinal and $S \subseteq \kappa$ is stationary. The diamond principle $\Diamond_S$ is the assertion ``there exists
a sequence $\langle  s_\alpha: \alpha \in S    \rangle$ such that each $s_\alpha$ is a subset of $\alpha$
and for any $X \subseteq \kappa,$ the set $\{\alpha \in S: X \cap \alpha = s_\alpha          \}$
is stationary in $\kappa$''.

\end{definition}
The following is a version of square principle that will be used through this paper.
\begin{definition}
$(a)$ A set $S\subseteq \kappa$ has a square, if $\kappa$ is a regular uncountable cardinal and there exists a set $S^+\subseteq\kappa$ and a sequence $\bar{C}=\langle C_\alpha: \alpha\in S^+ \rangle$ such that:

$\hspace{1.5cm}$$(\alpha)$ $S\setminus S^+$ in a non-stationary subset of $\kappa,$

$\hspace{1.5cm}$$(\beta)$ $ C_\alpha$ is a club of $\alpha,$

$\hspace{1.5cm}$$(\gamma)$ $\beta\in C_\alpha \Rightarrow \beta\in S^+$ and $C_\beta=C_\alpha\cap\beta,$

$\hspace{1.5cm}$$(\delta)$ $otp(C_\a) <\a.$

We may assume $C_\alpha=\emptyset,$ for $\alpha\notin S^+.$

$(b)$ Given a club subset $C$ of a limit ordinal $\alpha,$ let $nacc(C)$ be the set of non-–accumulation points of $C$, i.e.,
$nacc(C)=\{\beta\in C: sup(C\cap\beta)<\beta  \}$.
\end{definition}

For a forcing notion $\MPB,~ p \leq q$ means that $p$ gives more information  than $q$, or $p$ is stronger than $q$. The forcing notion used in this paper is
 the Cohen forcing described below.
\begin{definition}
Assume $\theta$ is a regular cardinal and $I$ is a  set with $|I| \geq \theta$. The Cohen forcing for adding $|I|$-many Cohen subsets of
$\theta,$ indexed by $I$, denoted $Add(\theta, I),$ consists of partial functions $p: I \to \{ 0, 1\}$ of size less than $\theta$,
ordered by reverse inclusion.
\end{definition}
The forcing notion $Add(\theta, I)$ is $\theta$-closed and satisfies  $(2^{<\theta})^+$-c.c., hence if $2^{<\theta}=\theta,$
then it preserves all cardinals.

\section{On $\mathscr{C}(D)$ being small}
In this section we consider the cases where $\mathscr{C}(D)$ is small,  by showing  that $\mathscr{C}(D)$ may not contain some pairs $(\theta, \sigma),$ for some suitable regular cardinals $\theta, \sigma.$ In particular we show that if $D$ is an ultrafilter on $\kappa$ and $\mu>\k$ is strongly compact, then $\mathscr{C}(D)$ is a set (in fact $\mathscr{C}(D) \subseteq \mu\times \mu$).

The following lemma will be useful in some of our arguments. It says instead of a saturated dense linear order, we can work with its completion.
\begin{lemma}
Assume $J_*$ is a $\lambda^+-$saturated dense linear order, $J$ is its completion and $D$ is an ultrafilter on $\kappa.$ Then:

$(a)$ If a cut of $J_*^\kappa/D$ has both cofinalities $\leq \lambda,$ then this cut is not filled in $J^\kappa/D.$

$(b)$ If $(C_1, C_2)$ is a cut of $J^\kappa / D$ of cofinality $(\theta_1, \theta_2),$ where $\theta_1, \theta_2$ are infinite, then it is induced by a cut of $J_*^\kappa/D.$
\end{lemma}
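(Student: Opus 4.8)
The plan is to first set up the basic relationship between $J_*$, its completion $J$, and the corresponding ultrapowers. Recall that $J_*$ sits inside $J$ as a \emph{dense} suborder: between any two elements of $J$ there is an element of $J_*$, and every $a\in J\setminus J_*$ realizes a cut $(L_a,R_a)$ of $J_*$ in which $L_a$ has no last element and $R_a$ no first element. Passing to ultrapowers, $J_*^\kappa/D$ is then (identified with) a dense suborder of $J^\kappa/D$: given $[f]<_D[g]$ in $J^\kappa/D$, on the $D$-large set $\{i:f(i)<_J g(i)\}$ one chooses $h(i)\in J_*$ with $f(i)<_J h(i)<_J g(i)$, and then $[f]<_D[h]<_D[g]$. (As is harmless for dense linear orders, we may assume $J_*$ and $J$ have no endpoints.)

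For $(a)$ I argue by contradiction. Let $(C_1,C_2)$ be a cut of $J_*^\kappa/D$ with $\cf(C_1)=\theta_1\le\lambda$ and $\mathrm{dcf}(C_2)=\theta_2\le\lambda$, witnessed by a $<_D$-increasing sequence $\langle[f_\alpha]:\alpha<\theta_1\rangle$ unbounded in $C_1$ and a $<_D$-decreasing sequence $\langle[g_\beta]:\beta<\theta_2\rangle$ coinitial in $C_2$, all functions mapping $\kappa$ into $J_*$; suppose some $[f]\in J^\kappa/D$ satisfies $[f_\alpha]<_D[f]<_D[g_\beta]$ for all $\alpha,\beta$. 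For each $i<\kappa$ consider the type over $J_*$
\[
p_i(x)=\{\,f_\alpha(i)<x : \alpha<\theta_1,\ f_\alpha(i)<_J f(i)\,\}\cup\{\,x<g_\beta(i) : \beta<\theta_2,\ f(i)<_J g_\beta(i)\,\}.
\]
It has size $\le\theta_1+\theta_2\le\lambda$ — this is the one place where the hypothesis $\theta_1,\theta_2\le\lambda$ of $(a)$ is used — and it is finitely satisfiable, since the parameters on the left all lie $<_J f(i)$, those on the right all lie $>_J f(i)$, and $J_*$ is dense in $J$. By $\lambda^+$-saturation of $J_*$, fix $h(i)\in J_*$ realizing $p_i$. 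Then $[h]\in J_*^\kappa/D$, and since $\{i:f_\alpha(i)<_J f(i)\}\in D$ for each $\alpha$ (as $[f_\alpha]<_D[f]$) and $h(i)$ satisfies $f_\alpha(i)<h(i)$ on that set, we get $[f_\alpha]<_D[h]$; symmetrically $[h]<_D[g_\beta]$ for all $\beta$. Thus $[h]$ is an element of $J_*^\kappa/D=C_1\cup C_2$ lying strictly above every $[f_\alpha]$ (hence outside $C_1$, by unboundedness of $\langle[f_\alpha]\rangle$ in $C_1$) and strictly below every $[g_\beta]$ (hence outside $C_2$) — a contradiction.

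For $(b)$, put $D_\ell=C_\ell\cap(J_*^\kappa/D)$ for $\ell=1,2$. Since $\theta_1,\theta_2$ are infinite, $C_1$ and $C_2$ have no extremal points, so by density of $J_*^\kappa/D$ in $J^\kappa/D$ both $D_1,D_2$ are nonempty; they partition $J_*^\kappa/D$, and an element of $J_*^\kappa/D$ strictly between $D_1$ and $D_2$ would lie in $C_1$ or in $C_2$, hence in $D_1$ or $D_2$, which is impossible — so $(D_1,D_2)$ is a cut of $J_*^\kappa/D$. To compute its cofinalities, take a $<_D$-increasing cofinal sequence $\langle[f_\alpha]:\alpha<\theta_1\rangle$ in $C_1$; as $\theta_1$ is a limit ordinal, for each $\alpha$ use density to choose $[h_\alpha]\in J_*^\kappa/D$ with $[f_\alpha]<_D[h_\alpha]<_D[f_{\alpha+1}]$. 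Then $\langle[h_\alpha]:\alpha<\theta_1\rangle$ is $<_D$-increasing, lies in $C_1\cap(J_*^\kappa/D)=D_1$, and is cofinal both in $D_1$ and in $C_1$; in particular $\cf(D_1)\le\theta_1$, and $C_1$ equals the downward closure of $D_1$ in $J^\kappa/D$. Conversely, any cofinal sequence in $D_1$ is, again by density, unbounded in $C_1$, so the cofinality of its length is $\ge\cf(C_1)=\theta_1$; hence $\cf(D_1)=\theta_1$. The symmetric argument with a coinitial sequence in $C_2$ gives $\mathrm{dcf}(D_2)=\theta_2$ and that $C_2$ is the upward closure of $D_2$. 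Therefore $(C_1,C_2)$ is exactly the cut of $J^\kappa/D$ induced by the cut $(D_1,D_2)$ of $J_*^\kappa/D$, with the same cofinality pair $(\theta_1,\theta_2)$.

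The main obstacle is the bookkeeping in $(a)$: one must choose the coordinatewise types $p_i$ carefully enough that their size stays $\le\lambda$, that they are finitely satisfiable at \emph{every} coordinate, and that $D$-largeness of the sets $\{i:f_\alpha(i)<_J f(i)\}$ and $\{i:f(i)<_J g_\beta(i)\}$ genuinely transfers to $[f_\alpha]<_D[h]$ and $[h]<_D[g_\beta]$. In $(b)$ the only delicate point is the hypothesis that $\theta_1,\theta_2$ are infinite: when some cofinality is $0$ or $1$, the extremal point of the cut of $J^\kappa/D$ need not belong to $J_*^\kappa/D$, and then the induced cut can acquire a different cofinality — exactly the degenerate situation that the remark in the introduction sets aside; with both cofinalities infinite this cannot happen.
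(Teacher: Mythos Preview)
Your proof is correct and follows essentially the same approach as the paper. For $(a)$ the paper also argues by contradiction, defining for each $i<\kappa$ the type $\Gamma_i=\{f_\alpha(i)<_{J_*}x<_{J_*}g_\beta(i):\ i\in A_{\alpha,\beta}\}$ where $A_{\alpha,\beta}=\{i:f_\alpha(i)<_J h(i)<_J g_\beta(i)\}\in D$, realizes it by saturation, and obtains a filler $h_*\in J_*^\kappa/D$; your type $p_i$ is an equivalent repackaging of $\Gamma_i$ (you split the left and right constraints, but both record exactly the inequalities certified by the putative filler), and for $(b)$ the paper simply remarks that it ``follows easily from the fact that $J_*$ is dense in $J$'', which is precisely the argument you spell out.
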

\begin{proof}
$(a)$ Suppose $(C_1, C_2)$ is a cut of $J_*^\kappa/D$ of cofinality $(\theta, \sigma),$ where $\theta, \sigma\leq \lambda,$ and let $\langle \langle f_\alpha/D: \alpha<\theta \rangle,  \langle g_\beta/D: \beta<\sigma \rangle  \rangle$ witness it. Assume the cut is filled in $J^\kappa/D$ by some $h\in J^\kappa.$ Then for all $\alpha<\theta, \beta<\sigma, f_\alpha <_D h <_D g_\beta,$ and hence $A_{\alpha, \beta}=\{i<\kappa: f_\alpha(i) <_J h(i) <_J g_\beta(i)  \}\in D.$ For $i<\kappa$ set
\begin{center}
$\Gamma_i=\{f_\alpha(i) <_{J_*} x <_{J_*} g_\beta(i): (\a, \beta)\in \theta\times \sigma$ such that $ i\in A_{\alpha,\beta} \}.$
\end{center}
$\Gamma_i$ is easily seen to be finitely satisfiable in $J$, hence also in $J_*$, so it is realized by some $h_*(i)\in J_*.$ Then $h_*\in J_*^\kappa/D,$ and for all $\alpha<\theta, \beta<\sigma$ we have $f_\alpha <_D h_* <_D g_\beta,$ a contradiction.

$(b)$ follows easily from the fact that $J_*$ is dense in $J$.
\end{proof}

\begin{theorem}
Suppose $\kappa$ is a measurable cardinal, and let $D$ be a $\kappa-$complete ultrafilter on $\kappa$. Then:

$(a)$ $\mathscr{C}_{\leq \kappa}(D)=\emptyset.$

$(b)$ $(\theta, \sigma)\notin \mathscr{C}(D),$ where $\theta<\kappa.$
\end{theorem}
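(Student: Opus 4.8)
The plan is to exploit $\kappa$-completeness of $D$ to ``push'' any short sequence of functions through the ultrapower into a pointwise argument. Fix a $\lambda^+$-saturated dense linear order $J$ where $\lambda \geq \theta+\sigma$; by Lemma~2.1 we may freely pass between $J$ and its completion when convenient. Suppose toward a contradiction that $(\theta,\sigma) \in \mathscr{C}(D)$ with, say, $\theta < \kappa$ (for part $(a)$, both cofinalities are $\leq \kappa$, and after possibly using the symmetry of $\mathscr{C}(D)$ from Remark~2.4$(a)$ together with the fact that a regular cardinal equal to $\kappa$ on one side forces the behaviour we analyze, we reduce to the case that at least one side is $<\kappa$; the case where one side equals $\kappa$ and is the cofinality of a cut in a $\kappa$-complete ultrapower is handled the same way since $\kappa$-completeness still lets us diagonalize over $<\kappa$ conditions on the other side). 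So assume $(\bar f, \bar g) = \langle \langle f_\alpha/D : \alpha < \theta\rangle, \langle g_\beta/D : \beta < \sigma\rangle\rangle$ witnesses a cut of $J^\kappa/D$ with $\theta < \kappa$.

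First I would observe that since $\theta < \kappa$ and $D$ is $\kappa$-complete, for each $i < \kappa$ the sequence $\langle f_\alpha(i) : \alpha < \theta\rangle$ need not be increasing in $J$, but for each pair $\alpha < \alpha' < \theta$ the set $A_{\alpha,\alpha'} = \{ i < \kappa : f_\alpha(i) <_J f_{\alpha'}(i)\} \in D$, and $\kappa$-completeness gives $A = \bigcap_{\alpha<\alpha'<\theta} A_{\alpha,\alpha'} \in D$ (an intersection of $\leq \theta^2 < \kappa$ sets). Similarly intersect in the $g$'s and in the mixed comparisons $f_\alpha <_J g_\beta$, getting a single set $A \in D$ on which, for every $i \in A$, $\langle f_\alpha(i) : \alpha<\theta\rangle$ is genuinely $<_J$-increasing, $\langle g_\beta(i) : \beta<\sigma\rangle$ is genuinely $<_J$-decreasing, and every $f_\alpha(i) <_J g_\beta(i)$. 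Now for each $i \in A$ consider the type $\Gamma_i = \{ f_\alpha(i) <_J x : \alpha < \theta\} \cup \{ x <_J g_\beta(i) : \beta < \sigma\}$ over the parameter set $\{f_\alpha(i) : \alpha<\theta\} \cup \{g_\beta(i):\beta<\sigma\}$, which has size $\leq \theta + \sigma \leq \lambda$; it is finitely satisfiable by density, so by $\lambda^+$-saturation it is realized by some $h(i) \in J$. Define $h$ arbitrarily off $A$. Then for all $\alpha < \theta$ and $\beta < \sigma$, $\{i : f_\alpha(i) <_J h(i) <_J g_\beta(i)\} \supseteq A \in D$, so $f_\alpha/D <_D h/D <_D g_\beta/D$, contradicting that $(\bar f, \bar g)$ witnesses a \emph{cut} (equivalently, a pre-cut with nothing in between).

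For part $(b)$, the argument is identical once $\theta < \kappa$: the side of size $\theta$ is the only place $\kappa$-completeness is needed to get the single $D$-large set $A$ on which everything is genuinely monotone, and the other side of arbitrary regular size $\sigma$ is handled purely by the $\lambda^+$-saturation of $J$ when realizing $\Gamma_i$ (there is no cardinality obstruction there because we chose $\lambda \geq \theta + \sigma$). For part $(a)$, both $\theta$ and $\sigma$ are $\leq \kappa$, and one checks that the only problematic subcase, both equal to $\kappa$, cannot produce a cut: if $\langle f_\alpha/D : \alpha<\kappa\rangle$ is increasing then by $\kappa$-completeness and a standard reflection/diagonalization (for each $\alpha$, $\{i : f_\alpha(i) <_J f_{\alpha+1}(i)\} \in D$, and one uses the normality or just completeness to find, for cofinally many $i$, that $\langle f_\alpha(i) : \alpha < \kappa\rangle$ is increasing of length $\geq$ some prescribed ordinal, then realizes the upper type), the cut gets filled exactly as above.

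The main obstacle I expect is the bookkeeping in the ``both sides $\leq \kappa$'' case of $(a)$: when $\theta$ or $\sigma$ equals $\kappa$ itself, one cannot intersect $\kappa$-many $D$-sets, so the clean pointwise monotonization fails and one must instead argue that no such increasing/decreasing $\kappa$-sequence in the ultrapower can actually bound a cut — this is where the measurability (as opposed to mere completeness) and the structure of $\kappa$-complete ultrafilters is really used, via the fact that $\kappa^\kappa/D$-type diagonal functions dominate appropriately. Everything else is a routine combination of Łoś-style reflection of inequalities into a single $D$-large set and the saturation hypothesis on $J$, with Lemma~2.1 available to move to the completion if realizing $\Gamma_i$ is more convenient there.
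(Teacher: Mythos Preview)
Your argument for part $(b)$ has a real gap when $\sigma \geq \kappa$. You write that you ``similarly intersect in the $g$'s and in the mixed comparisons $f_\alpha <_J g_\beta$,'' but those are $\sigma$ and $\theta\cdot\sigma$ many sets respectively, and $\kappa$-completeness does not let you intersect them once $\sigma \geq \kappa$. Without those intersections the type $\Gamma_i$ need not be finitely satisfiable: at a given $i \in A$ nothing prevents $g_\beta(i) <_J f_\alpha(i)$ for some particular $\alpha,\beta$, so there may be no $x$ with $f_{\alpha}(i) <_J x <_J g_\beta(i)$ for the finitely many $\alpha,\beta$ in question. Your later remark that ``the side of size $\theta$ is the only place $\kappa$-completeness is needed'' is thus not borne out by the argument you actually wrote; saturation of $J$ only helps after finite satisfiability is established, and that is exactly what fails.

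The repair is close to what you hinted at via Lemma~2.1. Intersect only on the $f$-side to get $A\in D$ on which $\langle f_\alpha(i):\alpha<\theta\rangle$ is genuinely increasing, and in the completion set $h(i)=\sup_{\alpha<\theta}f_\alpha(i)$. Then $f_\alpha <_D h$ is immediate, and for each \emph{fixed} $\beta<\sigma$ one intersects the $\theta<\kappa$ many sets $\{i:f_\alpha(i)<_J g_\beta(i)\}$ to get $h\leq_D g_\beta$, hence $h<_D g_\beta$ via $g_{\beta+1}$. The paper does not prove Theorem~2.2 directly (it cites Malliaris--Shelah), but its generalization Theorem~2.3 implements exactly this idea through the ultrapower embedding: the short sequence $\langle f_\alpha/D:\alpha<\theta\rangle$ becomes a single element $s$ of $\mathfrak{A}_2$, and the least upper bound of the range of $s$ in the completed order $j(J)$ fills the cut. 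Your treatment of the $(\kappa,\kappa)$ subcase of $(a)$ remains only a sketch; as the paper's footnote indicates, that case does require a separate argument and is not a triviality.
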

\begin{proof}
$(a)$ follows from \cite{malliaris-shelah2} Claim 9.1\footnote{In \cite{malliaris-shelah2} Claim 9.1, it is proved that $(\theta,\sigma)\notin \mathscr{C}(D)$ when $\theta, \sigma<\kappa$ or $\theta=\sigma=\kappa$. The case $\theta<\sigma=\kappa$ can be proved similarly. }, and $(b)$ is \cite{malliaris-shelah2} Claim 10.3.
\end{proof}
We now give a generalization of Theorem 2.2.
\begin{theorem}
Suppose $\k_1>\aleph_0$ and $D$ is a  $\k_1-$complete (but not $\k_1^+-$complete; hence $\k_1$ is a measurable cardinal) ultrafilter on $\k_2$ and $(\theta_1, \theta_2)\in \mathscr{C}(D).$ Then $\theta_1, \theta_2 >\k_1.$
\end{theorem}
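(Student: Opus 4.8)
The plan is to prove the (equivalent, since $\mathscr{C}(D)$ is symmetric in its two coordinates) statement: \emph{if $\theta_1\le\kappa_1$ then $(\theta_1,\theta_2)\notin\mathscr{C}(D)$ for every regular $\theta_2$}. So suppose toward a contradiction that for some $(\theta_1+\theta_2)^+$-saturated dense linear order $J$ the ultrapower $J^{\kappa_2}/D$ has a cut $(C_1,C_2)$ with $cf(C_1,C_2)=(\theta_1,\theta_2)$ and $\theta_1\le\kappa_1$; fix witnesses $\langle f_\alpha/D:\alpha<\theta_1\rangle$ (a $<_J$-increasing sequence cofinal in $C_1$) and $\langle g_\beta/D:\beta<\theta_2\rangle$ (witnessing $dcf(C_2)=\theta_2$), so that $f_\alpha/D<_J g_\beta/D$ for all $\alpha,\beta$.

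The hypothesis ``$D$ not $\kappa_1^+$-complete'' enters exactly once. Let $j_D:V\to\Ult(V,D)$ be the ultrapower embedding. Since $D$ is $\kappa_1$-complete, $j_D(\alpha)=\alpha$ for all $\alpha<\kappa_1$, hence $\sup_{\alpha<\kappa_1}j_D(\alpha)=\kappa_1\le j_D(\kappa_1)$, while since $D$ is not $\kappa_1^+$-complete, $j_D(\kappa_1)>\kappa_1$. As every ordinal below $j_D(\kappa_1)$ is of the form $[f]_D$ for some $f:\kappa_2\to\kappa_1$, I may fix $g:\kappa_2\to\kappa_1$ with $[g]_D=\kappa_1$. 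Two facts get used throughout: (i) $\{i<\kappa_2:g(i)>\alpha\}\in D$ for every $\alpha<\kappa_1$ (because $[g]_D=\kappa_1>\alpha=j_D(\alpha)$); and (ii) if $Y\in D$ and $\rho:Y\to\kappa_1$ satisfies $\rho(i)<g(i)$ on $Y$, then $\rho$ is constant on a set in $D$ (because $[\rho]_D<[g]_D=\kappa_1$ forces $[\rho]_D=j_D(\delta)$ for some $\delta<\kappa_1$, i.e. $\rho\equiv\delta\pmod D$). From (ii) one gets the ``$g$-diagonal intersection'' principle: if $X_\alpha\in D$ for all $\alpha<\kappa_1$, then $\{i<\kappa_2:i\in X_\alpha\text{ for all }\alpha<g(i)\}\in D$. (As a by-product, (i)--(ii) show $g_*D$ is a normal $\kappa_1$-complete ultrafilter on $\kappa_1$, so $\kappa_1$ is measurable.)

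Next I build $h\in J^{\kappa_2}$ with $f_\alpha/D<_J h/D<_J g_\beta/D$ for all $\alpha<\theta_1,\ \beta<\theta_2$; since $C_1\cup C_2=J^{\kappa_2}/D$ this is impossible — an $h/D$ strictly above the cofinal sequence $\langle f_\alpha/D\rangle$ cannot lie in $C_1$, and one strictly below $\langle g_\beta/D\rangle$ cannot lie in $C_2$ — and that contradiction finishes the proof. For $\alpha<\theta_1$ set $A_\alpha:=\bigcap_{\gamma<\alpha}\{i:f_\gamma(i)<_J f_\alpha(i)\}$, which lies in $D$ by $\kappa_1$-completeness as $|\alpha|<\kappa_1$; by the $g$-diagonal intersection principle, $A:=\{i:i\in A_\alpha\text{ for all }\alpha<\min(g(i),\theta_1)\}\in D$. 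Put $n(i):=\min(g(i),\theta_1)$, so for $i\in A$ the sequence $\langle f_\alpha(i):\alpha<n(i)\rangle$ is $<_J$-increasing; let $S_i:=\{\beta<\theta_2:f_\alpha(i)<_J g_\beta(i)\text{ for all }\alpha<n(i)\}$ and let $h(i)\in J$ realize the type $\Gamma_i:=\{f_\alpha(i)<_J x:\alpha<n(i)\}\cup\{x<_J g_\beta(i):\beta\in S_i\}$. This type is readily seen to be finitely satisfiable in $J$ (by density of $J$ and the definition of $S_i$; note $0\in S_i$ for $D$-almost all $i$ by a $g$-diagonal argument on $\{i:f_\alpha(i)<_J g_0(i)\}$), and it has at most $\theta_1+\theta_2$ parameters, so $(\theta_1+\theta_2)^+$-saturation of $J$ supplies $h(i)$; for $i\notin A$ (a set not in $D$) let $h(i)$ be arbitrary.

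Finally, the contradiction. For each $\alpha<\theta_1$: if $i\in A$ and $g(i)>\alpha$ then $\alpha<n(i)$, hence $h(i)>_J f_\alpha(i)$; as $A\cap\{i:g(i)>\alpha\}\in D$ by (i), we get $f_\alpha/D<_J h/D$. Therefore $h/D\notin C_1$, so $h/D\in C_2$, so there is $\beta_0<\theta_2$ with $g_{\beta_0}/D<_J h/D$. On the set $Y:=A\cap\{i:g_{\beta_0}(i)<_J h(i)\}\in D$ we must have $\beta_0\notin S_i$ (else $h(i)<_J g_{\beta_0}(i)$ by construction of $h(i)$), i.e. $g_{\beta_0}(i)\le_J f_\alpha(i)$ for some $\alpha<n(i)\le g(i)$; let $\rho(i)$ be the least such $\alpha$. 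Then $\rho:Y\to\kappa_1$ with $\rho(i)<g(i)$ and $\rho(i)<\theta_1$, so by (ii) there is $\alpha^*<\theta_1$ with $\rho\equiv\alpha^*$ on a set in $D$, giving $g_{\beta_0}/D\le_J f_{\alpha^*}/D$ — contradicting $f_{\alpha^*}/D\in C_1<_J C_2\ni g_{\beta_0}/D$. The step I expect to be the crux is the passage to a $g$ with $[g]_D=\kappa_1$ together with (ii): this is precisely what forces the a priori wildly varying index $\rho(i)<g(i)$ to collapse to a single $\alpha^*<\theta_1$, and it is what lets one dispense with any constraint on the size of $\theta_2$, replacing the appeal to normality that would be available in the classical case ($D$ on a measurable $\kappa$).
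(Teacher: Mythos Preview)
Your proof is correct and takes a genuinely different route from the paper's.

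The paper argues abstractly via the ultrapower of $H(\chi)$: it observes that the function $\xi\mapsto\langle f_\alpha(\xi):\alpha<\theta_1\rangle$ represents, in $\mathfrak{A}_2=\mathfrak{A}_1^{\kappa_2}/D$, an internal sequence $s$ with $s(\alpha)=f_\alpha/D$ for $\alpha<\theta_1$ (here $\kappa_1$-completeness is what makes the internal domain equal $\theta_1$); it then passes to the completion $J$ of the saturated order, so that in $\mathfrak{A}_2$ the order $j(J)$ is complete and the least upper bound $b_*$ of $\operatorname{ran}(s)$ exists as an element; this $b_*=h/D$ fills the cut. By contrast, you stay entirely at the combinatorial level: you pick $g:\kappa_2\to\kappa_1$ with $[g]_D=\kappa_1$, extract the $g$-diagonal-intersection and regressive-function principles, build $h(i)$ coordinatewise by realizing a type in the saturated order $J$ itself, and close with a Fodor-style collapse of the witness index $\rho(i)<g(i)$ to a fixed $\alpha^*<\theta_1$.

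What each buys: the paper's argument is shorter and conceptually uniform (``the sequence is internal, the order is internally complete, take the lub''), at the price of invoking Lemma~2.1 to pass to the completion. Your argument is more elementary---no completion, no ultrapower of $H(\chi)$---and makes explicit exactly how non-$\kappa_1^+$-completeness enters (through the existence of $g$ and the normality of $g_*D$). Your final paragraph correctly identifies the crux: the regressive-function step is precisely what replaces the appeal to normality one would have if $D$ were already normal on a measurable.
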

\begin{remark}
If $\k_1=\aleph_0,$ then $(\aleph_0, \lcf(\aleph_0, D)) \in  \mathscr{C}(D),$ and $\lcf(\aleph_0, D)$ may be $\aleph_1,$ where $\lcf(\aleph_0, D)$ denotes the lower cofinality \footnote{See \cite{shelah3}, Chapter VI for the definition of $\lcf(\aleph_0, D)$ and more information about it.}.
\end{remark}
\begin{proof}
Toward contradiction, assume that $\theta_1, \theta_2$ are regular, $\theta_1 \leq \k_1$ and $(\theta_1, \theta_2)\in \mathscr{C}(D)$ (recall that $\mathscr{C}(D)$ is symmetric, so we can assume w.l.o.g. that $\theta_1\leq \k_1$). Let $\l=\k_2+\theta_1+\theta_2$, let $J_*$ be a $\l^+-$saturated dense linear order, and suppose that $\langle  \langle f_\a/D : \a<\theta_1 \rangle, \langle g_\b/D: \b<\theta_2 \rangle\rangle  $ witnesses  $(\theta_1, \theta_2)\in \mathscr{C}(D)$, where $f_\a, g_\b \in J_*^{\k_2}.$ Let $\bar{f}/D=\langle f_\a/D : \a<\theta_1 \rangle,$ $\bar{g}/D=\langle g_\b/D : \b<\theta_2 \rangle$ and let $J$ be the completion of $J_*$. By Lemma 2.1, $\langle\bar{f}, \bar{g} \rangle$ also witnesses a cut of $J^{\k_2}/D.$

Let $\chi$ be a large enough regular cardinal such that $\bar{f}, \bar{g}\in H(\chi)$, and consider the structure $\mathfrak{A}_1=(H(\chi), \in)$ and let $\mathfrak{A}_2=\mathfrak{A}_1^{\k_2}/D.$ Also let $j: \mathfrak{A}_1  \rightarrow \mathfrak{A}_2$ be the canonical elementary embedding. Clearly $j$ is identity on $H(\k_1)$ and $crit(j)= \k_1$.

\begin{claim}
There exists $s\in \mathfrak{A}_2$ such that:

$\hspace{1.cm}$$(a)$ $\mathfrak{A}_2\models $`` $s$ is a function from $j(\theta_1)$ into $j(J_*)$'',

$\hspace{1.cm}$$(b)$ $\mathfrak{A}_2\models$`` $s(\a)=f_\a/D$'' for every $\a<\theta_1.$
\end{claim}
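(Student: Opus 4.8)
The plan is to build $s$ by reflecting the sequence $\bar f$ through the ultrapower embedding $j$ in the standard way, but being careful that $\theta_1 \le \kappa_1$, so the index set $\theta_1$ may not be moved by $j$. First I would fix, for each $\alpha < \theta_1$, a function $\hat f_\alpha : \kappa_2 \to J_*$ representing $f_\alpha/D$ in the ultrapower, so that $\mathfrak{A}_2 \models$``$f_\alpha/D \in j(J_*)$'' holds because $\{i < \kappa_2 : \hat f_\alpha(i) \in J_*\} = \kappa_2 \in D$. The natural candidate for $s$ is the element of $\mathfrak{A}_2$ represented by the function $i \mapsto (\text{some code for the map } \alpha \mapsto \hat f_\alpha(i) \text{ on } \theta_1)$; call this function $F$, where $F(i) : \theta_1 \to J_*$ is defined by $F(i)(\alpha) = \hat f_\alpha(i)$. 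Since $H(\chi)$ is closed under the relevant operations and $\theta_1, \bar f \in H(\chi)$, each $F(i)$ is a genuine element of $H(\chi)$, so $F \in H(\chi)^{\kappa_2}$ and $s := F/D$ is a legitimate element of $\mathfrak{A}_2$.

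Next I would verify clause $(a)$: by {\L}o\'s's theorem, $\mathfrak{A}_2 \models$``$s$ is a function with domain $j(\theta_1)$ and range $\subseteq j(J_*)$'' because for $D$-almost every $i$ (in fact all $i$) $\mathfrak{A}_1 \models$``$F(i)$ is a function from $\theta_1$ into $J_*$'', and applying $j$ turns $\theta_1$ into $j(\theta_1)$ and $J_*$ into $j(J_*)$ inside $\mathfrak{A}_2$. For clause $(b)$, fix $\alpha < \theta_1$. Here is where I use $\theta_1 \le \kappa_1 = \crit(j)$: since $\alpha < \kappa_1$, we have $j(\alpha) = \alpha$, and the ordinal $\alpha$ (viewed as an element of $\mathfrak{A}_2$) is represented by the constant function $i \mapsto \alpha$. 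Then $\mathfrak{A}_2 \models$``$s(\alpha) = $ (the class of $i \mapsto F(i)(\alpha)$)'' $= $ (the class of $i \mapsto \hat f_\alpha(i)$) $= f_\alpha/D$, again by {\L}o\'s applied to the trivially true statement $\mathfrak{A}_1 \models$``$F(i)(\alpha) = \hat f_\alpha(i)$'' holding for every $i$.

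The one genuine subtlety — the step I expect to be the main obstacle, or at least the point requiring the most care — is making sure that in clause $(b)$ the element ``$\alpha$'' appearing as the argument of $s$ in $\mathfrak{A}_2$ really is $j(\alpha)$ and not some other ordinal of the ultrapower below $j(\theta_1)$; this is exactly what $\alpha < \crit(j)$ guarantees, and it is the reason the hypothesis $\theta_1 \le \kappa_1$ is used here rather than a weaker bound. If instead $\theta_1 > \kappa_1$ one would only get the analogous statement about $s(j(\alpha))$ for $\alpha < \theta_1$, and the indices $j(\alpha)$ would not exhaust $j(\theta_1)$, which is precisely the phenomenon the later argument needs to rule out. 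Once Claim 2.6 is in hand, the intended continuation is presumably to look at the cut in $\mathfrak{A}_2$'s internal copy of $j(J_*)^{\kappa}/D$ determined by $s$ together with an analogous object $t$ coming from $\bar g$, derive that $j(\theta_1)$ and $j(\theta_2)$ are the cofinalities of this cut, and then exploit the extra measurability/completeness of $D$ to contradict Theorem 2.2 (or a variant of it) applied inside $\mathfrak{A}_2$ at the measurable cardinal $\kappa_1$.
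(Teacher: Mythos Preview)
Your construction of $s$ as $F/D$ with $F(i)(\alpha)=f_\alpha(i)$ is exactly the paper's approach (the paper calls this function $h$), and your verification of (a) and (b) via \L o\'s's theorem together with $\alpha<\theta_1\le\kappa_1=\crit(j)$ is correct and matches the paper's reasoning. Your closing speculation about the continuation is off, however: the paper does not appeal to Theorem~2.2 inside $\mathfrak{A}_2$, but rather uses that $\mathfrak{A}_2\models\text{``}j(J)\text{ is complete''}$ to take the $<_{j(J)}$-least upper bound $b_*$ of $\{s(\alpha):\alpha<\theta_1\}$, which then fills the cut.
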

\begin{proof}
Define $h:\k_2 \rightarrow \mathfrak{A}_1$ by
\begin{center}
$h(\xi)= \langle f_\a(\xi): \a < \theta_1 \rangle.$
\end{center}
Let $s=h/D\in \mathfrak{A}_2.$ We show that $s$ is as required. First note that $s=j(h)(\k_1)$

It is clear that
\begin{center}
$s=j(h)(\k_1)= \langle j(\bar{f})_\a(\k_1): \a< j(\theta_1) \rangle.$
\end{center}
But for $\a<\theta_1,$ we have $j(\bar{f})_\a(\k_1)=j(\bar{f})_{j(\a)}(\k_1)=j(f_\a)(\k_1)=f_\a/D.$
The result follows immediately.
\end{proof}
So we have the following:
\begin{claim}
There exist $s\in \mathfrak{A}_2$ and $\k_*$ such that:

$\hspace{1.cm}$$(a)$ $\mathfrak{A}_2\models $`` $s$ is a function with domain $\k_*$'',

$\hspace{1.cm}$$(b)$ $\k_*$ is \footnote{In fact it is easily seen that $\k_*=\theta_1.$} $j(\theta_1)$ if $\theta_1<\k_1$ and the least upper bound of $\{j(\a): \a<\theta_1\}$ if $\theta_1=\k_1,$

$\hspace{1.cm}$$(c)$ $\mathfrak{A}_2\models$`` $s(\a)=f_\a/D$'' for every $\a<\theta_1.$
\end{claim}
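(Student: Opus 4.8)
The plan is to read off $s$ and $\k_*$ from the preceding claim by restricting to the correct initial segment of the domain, distinguishing the two cases permitted by the standing hypothesis $\theta_1 \le \k_1$. First I would name by $s_0$ the function supplied by the preceding claim, so that $\mathfrak{A}_2 \models$ ``$s_0$ is a function from $j(\theta_1)$ into $j(J_*)$'' and $\mathfrak{A}_2 \models$ ``$s_0(\alpha) = f_\alpha/D$'' for every $\alpha < \theta_1$. Since $D$ is $\k_1$-complete with $\k_1 > \aleph_0$, the ultrapower $\mathfrak{A}_2$ is well-founded; I would identify it with its transitive collapse, so that its ordinals are genuine ordinals and $\dom$, $\restriction$ are absolute. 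I would also recall from the construction of $j$ that $\crit(j) = \k_1$, hence $j(\alpha) = \alpha$ for all $\alpha < \k_1$ while $j(\k_1) > \k_1$.

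In the case $\theta_1 < \k_1$ one has $j(\theta_1) = \theta_1$, so $s_0$ already has domain $\theta_1$, and I would simply put $s = s_0$, $\k_* = \theta_1 = j(\theta_1)$; conditions $(a)$--$(c)$ then hold verbatim. In the case $\theta_1 = \k_1$ I would set $\k_* = \sup\{j(\alpha) : \alpha < \theta_1\}$, which equals $\k_1$ because $j$ is the identity below $\k_1$; since $\crit(j) = \k_1$ we get $\k_* = \k_1 < j(\k_1) = j(\theta_1)$, so $\k_*$ is a proper initial segment of the domain of $s_0$ as computed in $\mathfrak{A}_2$. As $s_0, \k_* \in \mathfrak{A}_2$ and $\mathfrak{A}_2$ models a sufficient fragment of $\mathrm{ZFC}$, the restriction $s := s_0 \restriction \k_*$ lies in $\mathfrak{A}_2$ and $\mathfrak{A}_2 \models$ ``$s$ is a function with domain $\k_*$'', which is $(a)$ and $(b)$; and for $\alpha < \theta_1 = \k_1 = \k_*$ we have $\mathfrak{A}_2 \models$ ``$s(\alpha) = s_0(\alpha) = f_\alpha/D$'', which is $(c)$. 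In either case $\k_* = \theta_1$, as recorded in the footnote.

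I do not expect any genuine obstacle: the mathematical substance is wholly contained in the preceding claim, and what remains is the bookkeeping that turns a function of domain $j(\theta_1)$ into one whose domain is the ``intended'' ordinal $\k_*$. The only points worth a moment's attention are the elementary computation $\sup\{j(\alpha) : \alpha < \k_1\} = \k_1$, which uses $\crit(j) = \k_1$, and the routine absoluteness observation that a transitive model of a suitable fragment of $\mathrm{ZFC}$ is closed under restricting a function to an ordinal. The exhaustiveness of this two-case split is precisely why the main argument only ever needs to treat $\theta_1 \le \k_1$.
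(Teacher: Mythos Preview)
Your proposal is correct and follows the paper's approach: the paper gives no separate proof for this claim, introducing it with ``So we have the following'' immediately after the previous claim, thereby treating it as an immediate consequence obtained exactly as you describe---by taking the function $s_0$ from that claim and, in the case $\theta_1=\k_1$, restricting it to $\k_*=\sup\{j(\a):\a<\k_1\}=\k_1$. Your added remarks on well-foundedness and absoluteness of restriction simply make explicit what the paper leaves tacit.
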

Fix $s$ as in Claim 2.6.
\begin{claim}
If $A_2 \subseteq \mathfrak{A}_2, |A_2| \leq \l$ and $b\in A_2 \Rightarrow \mathfrak{A}_2\models$`` $ b\in j(J_*)$ and $s(\a) <_{j(J_*)} b$'' for all $\a<\theta_1$, then for some $b_*\in j(J),$ we have
\begin{center}
$b\in A_2$ and $\a<\theta_1 \Rightarrow \mathfrak{A}_2\models$`` $f_\a/D <_{j(J)} b_* \leq_{j(J)} b$''.
\end{center}
\end{claim}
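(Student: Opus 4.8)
The statement says: given a small family $A_2$ of elements of $\mathfrak{A}_2 = \mathfrak{A}_1^{\kappa_2}/D$ all lying in $j(J_*)$ and all bounding $s(\alpha) = f_\alpha/D$ from above (for every $\alpha < \theta_1$), we can find a single $b_* \in j(J)$ with $f_\alpha/D <_{j(J)} b_*$ for all $\alpha < \theta_1$ and $b_* \le_{j(J)} b$ for every $b \in A_2$. The idea is to use the saturation of the completion $j(J)$ inside $\mathfrak{A}_2$. Note $j$ is elementary, so $\mathfrak{A}_2 \models$ ``$j(J_*)$ is a $j(\lambda)^+$-saturated dense linear order and $j(J)$ is its completion''. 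Since $\operatorname{crit}(j) = \kappa_1$ and $\lambda = \kappa_2 + \theta_1 + \theta_2 \ge \kappa_1$... wait, actually $\lambda \ge \kappa_2 \ge \kappa_1$, so $j(\lambda) \ge \lambda$ — but the point is that $|A_2| \le \lambda \le j(\lambda)$, so the saturation level of $j(J_*)$ inside $\mathfrak{A}_2$ is enough to handle $A_2$ plus the set $\{s(\alpha) : \alpha < \theta_1\}$, which (via $s$, a single element coding the whole sequence) is "internally" a set of size $\kappa_* \le j(\theta_1) \le j(\lambda)$.

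The key move: work inside $\mathfrak{A}_2$. There, $s$ is a function with domain $\kappa_*$, and $A_2$ is (from the external viewpoint) a set of size $\le \lambda$. The subtlety is that $A_2$ need not be an element of $\mathfrak{A}_2$ — it is an external subset. But $\lambda^+$-saturation, or rather its consequence noted right after Definition 1.1 in the excerpt (the "three-element interpolation" remark), is exactly what we need: for each finite subset of $\{s(\alpha)/D : \alpha < \theta_1\} \cup A_2$ — or better, using that $\operatorname{ran}(s)$ is internally bounded and $A_2$ bounds it — we can find interpolants. Concretely, I would consider, inside $\mathfrak{A}_2$, for each $b \in A_2$ the statement that the set $X_b = \{x \in j(J_*) : (\forall \alpha < \kappa_*)\, s(\alpha) <_{j(J_*)} x <_{j(J_*)} b\}$ is the interval strictly between $\operatorname{ran}(s)$ (internally a set) and $b$. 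By the interpolation remark applied inside $\mathfrak{A}_2$ (legitimate since $\mathfrak{A}_2$ thinks $j(J_*)$ is $j(\lambda)^+$-saturated and $\kappa_* \le j(\lambda)$), this interval is nonempty: there is $s_2 \in j(J_*)$ with $s(\alpha) <_{j(J_*)} s_2 <_{j(J_*)} b$ for all $\alpha$. Then take $b_*$ to be the supremum in $j(J)$ (which exists, $j(J)$ being complete in $\mathfrak{A}_2$) of all elements $>_{j(J_*)} s(\alpha)$ for every $\alpha$ — i.e. $b_* = \inf_{j(J)}\{ c \in j(J) : (\forall \alpha)\, s(\alpha) <_{j(J)} c\}$, the "least filler" of the cut determined by $\operatorname{ran}(s)$ upstairs. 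This $b_*$ is automatically $\le_{j(J)} b$ for every such $b$, and the nonemptiness of each $X_b$ guarantees $f_\alpha/D = s(\alpha) <_{j(J)} b_*$.

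The main obstacle — the part to be careful about — is the interplay between "external" quantification over $A_2$ (a set of size $\le \lambda$ in $V$, not necessarily internal) and "internal" saturation of $j(J_*)$ in $\mathfrak{A}_2$. The clean way around it is: first produce $b_*$ purely internally from $s$ alone, as the infimum in $j(J)$ of the upper cut of $\operatorname{ran}(s)$; this needs only that $\mathfrak{A}_2 \models$ ``$j(J)$ is a complete linear order'' and that $\operatorname{ran}(s)$ has an upper bound in $j(J_*)$ — and it does, since any single $b \in A_2$ is one. Then $b_* \le_{j(J)} b$ for all $b \in A_2$ is immediate from $b$ being an upper bound of $\operatorname{ran}(s)$ and $b_*$ being the least such. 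Finally $s(\alpha) <_{j(J)} b_*$ needs $b_*$ to not equal any $s(\alpha)$, equivalently that $\operatorname{ran}(s)$ has no maximum — which follows because inside $\mathfrak{A}_2$, $s$ is increasing (it's the image of an increasing sequence under $j$, or: $\mathfrak{A}_2$ models that $\langle s(\alpha) : \alpha < \kappa_*\rangle$ is $<_{j(J_*)}$-increasing with no last element, as $\kappa_*$ is a limit) and $j(J_*)$ is dense with no endpoints, so strict interpolants exist and $b_*$ is strictly above each $s(\alpha)$. I would also double-check the edge case $\theta_1 = \kappa_1$ in Claim 2.6(b), where $\kappa_*$ is the sup of $\{j(\alpha) : \alpha < \theta_1\}$ rather than $j(\theta_1)$: the argument is the same, one only uses that $\kappa_* \le j(\theta_1) \le j(\lambda)$ and that $\operatorname{ran}(s) \restriction \theta_1$ is cofinal below the relevant bounds.
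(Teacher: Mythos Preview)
Your final approach --- take $b_*$ to be, internally in $\mathfrak{A}_2$, the $<_{j(J)}$-least upper bound of $\operatorname{ran}(s)$, using that $s\in\mathfrak{A}_2$ and $\mathfrak{A}_2\models$ ``$j(J)$ is complete'' --- is exactly the paper's proof; then $b_*\le_{j(J)} b$ for each $b\in A_2$ is immediate, and the strict inequality $f_\alpha/D<_{j(J)} b_*$ follows from $s(\alpha)<s(\alpha+1)\le b_*$. One small caution: your assertion that $s$ is increasing \emph{internally} in $\mathfrak{A}_2$ is not obviously justified (the $f_\alpha$ are only $<_D$-increasing, not pointwise increasing, so $\{\xi:\langle f_\alpha(\xi):\alpha<\theta_1\rangle$ is increasing$\}$ need not lie in $D$), but you do not actually need it --- the single externally verified inequality $s(\alpha)<s(\alpha+1)$ for each standard $\alpha<\theta_1$ already gives what you want.
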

\begin{proof}
Let $b_*\in j(J)$ be such that $\mathfrak{A}_2\models$``$b_*$ is the $<_{j(J)}-$least upper bound of $s(\a), \a<\theta_1$''. Note that such a $b_*$ exists as $\mathfrak{A}_2\models$``$j(J)$ is a complete dense linear order'' and $s\in \mathfrak{A}_2$. It is easily seen that  for $b\in A_2$ and $\a<\theta_1$ we have  $\mathfrak{A}_2\models$`` $f_\a/D <_{j(J)} b_* \leq_{j(J)} b$''.
\end{proof}
Let $A_2=\{g_\b/D : \b<\theta_2\}.$ As $\theta_2\leq\lambda,$ and for $\b<\theta_2, g_\b/D\in j(J_*),$ so we can apply Claim 2.6 to find some $b_*\in j(J)$ such that for all $\a<\theta_1, \b<\theta_2, \mathfrak{A}_2\models$`` $f_\a/D <_{j(J)} b_* \leq_{j(J)} g_\b/D$''. Let $h\in J^\k$ be such that $b_*=h/D.$ Then
\begin{center}
$\a<\theta_1, \b<\theta_2 \Rightarrow \mathfrak{A}_2\models$`` $f_\a/D <_{j(J)} h/D \leq_{j(J)} g_{\b+1}/D <_{j(J)} g_\b/D$''.
\end{center}
It follows that the cut $\langle \bar{f}, \bar{g} \rangle$ is filled in $J^\k/D,$ which is in contradiction with Lemma 2.1$(a)$. The theorem follows.
\end{proof}

The next theorem is implicit in \cite{malliaris-shelah2} Theorem 11.3. We give a proof for completeness.
\begin{theorem}
Suppose $D$ is an ultrafilter on $\kappa$ and $\theta>\kappa$ is weakly compact. Then $(\theta, \theta)\notin \mathscr{C}(D).$
\end{theorem}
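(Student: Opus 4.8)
The plan is to exploit weak compactness of $\theta$ via the tree property, in a way that parallels the measurable case of Theorem 2.3 but uses the combinatorial strength of $\theta$ rather than an embedding. By Lemma 2.1 we may replace the given $(\theta^++\kappa)^+$-saturated dense linear order $J_*$ by its completion $J$, so it suffices to show that no cut of $J^\kappa/D$ can have cofinality $(\theta,\theta)$ when $J$ is a complete dense linear order which is sufficiently saturated. Suppose toward a contradiction that $\bar f/D=\langle f_\alpha/D:\alpha<\theta\rangle$ and $\bar g/D=\langle g_\beta/D:\beta<\theta\rangle$ witness such a cut, with $f_\alpha,g_\beta\in J^\kappa$. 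Since $\theta>\kappa$ and $\theta$ is regular, for each $i<\kappa$ and each $\alpha<\theta$ the sequence $\langle f_\gamma(i):\gamma<\alpha\rangle$ and $\langle g_\delta(i):\delta<\alpha\rangle$ live in $J$, and we want to use completeness of $J$ to manufacture, levelwise, an approximation to a filling element.

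The key step is to build a tree of height $\theta$ recording "local consistency" of initial segments of the cut. For $\alpha<\theta$, consider the function $c_\alpha:\kappa\to J$ defined by letting $c_\alpha(i)$ be the $<_J$-least upper bound in $J$ of $\{f_\gamma(i):\gamma<\alpha\}$, which exists by completeness of $J$; dually let $d_\alpha(i)$ be the greatest lower bound of $\{g_\delta(i):\delta<\alpha\}$. Then $c_\alpha/D$ is a $<_D$-least upper bound of $\langle f_\gamma/D:\gamma<\alpha\rangle$ and $d_\alpha/D$ a greatest lower bound of $\langle g_\delta/D:\delta<\alpha\rangle$; since the cut is not filled, we must have $c_\alpha/D >_D$ something or, more to the point, for cofinally many $\alpha$ the pair $(c_\alpha/D,d_\alpha/D)$ fails to "close up." I would partition $\kappa$ according to the finitely-many order-relations that can hold at coordinate $i$ among $f_\alpha(i),c_\alpha(i),g_\beta(i),d_\alpha(i)$, using that $D$ is an ultrafilter to select, for each $\alpha<\theta$, a single $D$-large "pattern set" $A_\alpha\subseteq\kappa$ together with the pattern. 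Since there are only $2^\kappa<\theta$ possible patterns (here is where $\theta>\kappa$, indeed $\theta$ inaccessible as a weakly compact cardinal, is used) and $\theta$ is regular, a cofinal set of $\alpha$'s share one pattern; restricting to a club/cofinal $E\subseteq\theta$ of such $\alpha$ and using continuity of $c_\alpha,d_\alpha$ at limits, one gets a $\theta$-branching obstruction.

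The main obstacle I expect is turning this "pattern stabilization" into an actual contradiction: stabilizing the pattern on a $D$-large set does not by itself produce a single element filling the cut, because the witnessing coordinate sets $A_\alpha$ may shrink as $\alpha$ grows even though their pattern is fixed. The fix is to encode the situation as a $\theta$-tree: nodes at level $\alpha$ are pairs $(i, s)$ where $s$ is a finite approximation (a choice of one of finitely many order-types realized on $A_\alpha$ below $\alpha$), with the tree ordering refining the coordinate and the approximation; $\theta$ being weakly compact yields a cofinal branch, and the branch assembles — coordinate by coordinate, via completeness of $J$ once more — into a function $h\in J^\kappa$ with $f_\alpha/D<_D h/D<_D g_\beta/D$ for all $\alpha,\beta<\theta$, contradicting that $\langle\bar f,\bar g\rangle$ is a cut. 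Alternatively, and perhaps more cleanly, one can argue by reflection: weak compactness of $\theta$ gives, for any structure on $H(\theta)$ coding $\bar f,\bar g$ (which, after the Lemma 2.1 reduction, can be arranged to have size $<\theta$ below level $\alpha$ for each $\alpha$), a transitive elementary substructure / tree-property witness that reflects the statement "the initial cut is fillable" down to a point where it can be realized in $J^\kappa/D$ itself; pushing this realization back up fills the whole cut and gives the contradiction. I would develop whichever of these two presentations (explicit tree vs. reflection) makes the coordinatewise completeness argument shortest, and I anticipate the bookkeeping of the pattern sets $A_\alpha$ under $D$ to be the only genuinely delicate point.
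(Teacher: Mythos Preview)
Your proposal has a genuine gap: you never actually bring the Ramsey-theoretic strength of weak compactness to bear. Everything concrete in your outline---the pointwise suprema $c_\alpha(i)$, the ``pattern sets'' $A_\alpha$, the observation that there are at most $2^\kappa<\theta$ patterns so a cofinal set of $\alpha$'s share one---uses only that $\theta$ is regular and strongly inaccessible. You recognize this yourself (``stabilizing the pattern on a $D$-large set does not by itself produce a single element filling the cut''), but the proposed fixes do not close the gap. The tree you describe has no clear definition (nodes at level $\alpha$ are ``pairs $(i,s)$'' with $s$ a ``finite approximation''---approximation to what, and why is this a tree of height $\theta$ with levels of size $<\theta$?), and the reflection sketch does not explain what first-order statement about a structure of size $<\theta$ is being reflected. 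Note also that $c_\alpha/D$ is an upper bound of $\{f_\gamma/D:\gamma<\alpha\}$, but there is no reason it should be a \emph{least} upper bound in $J^\kappa/D$.

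The missing idea is to color \emph{pairs}, not single indices. For $\alpha<\beta<\theta$ set
\[
A_{\alpha,\beta}=\{i<\kappa: f_\alpha(i)<_J f_\beta(i)<_J g_\beta(i)<_J g_\alpha(i)\}\in D,
\]
and apply $\theta\to(\theta)^2_{2^\kappa}$ to the coloring $d(\alpha,\beta)=A_{\alpha,\beta}$. This yields $X\in[\theta]^\theta$ and a single $A_*\in D$ with $A_{\alpha,\beta}=A_*$ for all $\alpha<\beta$ in $X$. Now for each $i\in A_*$ the sequences $\langle f_\alpha(i):\alpha\in X\rangle$ and $\langle g_\alpha(i):\alpha\in X\rangle$ are genuinely monotone and nested, so the type $\Gamma_i=\{f_\alpha(i)<_J x<_J g_\alpha(i):\alpha\in X\}$ is finitely satisfiable; $\theta^+$-saturation of $J$ realizes it by some $h(i)$, and $h$ fills the cut. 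The paper remarks immediately after this proof that strong inaccessibility alone (even with $\theta\to(\theta,\alpha)^2$ for every $\alpha<\theta$) is provably insufficient to exclude $(\theta,\theta)$, which confirms that your single-index stabilization cannot be completed without essentially reproducing the pair coloring.
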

\begin{proof}
Suppose not. Let $J$ be a $\theta^+-$saturated dense linear order and suppose $\langle  \langle f_\a/D: \a<\theta  \rangle, \langle g_\a/D: \a<\theta  \rangle \rangle$ witnesses $(\theta, \theta)\in \mathscr{C}(D).$ For $\a <\b <\theta$ we have
\begin{center}
$A_{\a,\b}=\{i<\k: f_\a(i) <_J f_\b(i) <_J g_\b(i) <_J g_\a(i)   \}\in D.$
\end{center}
Define $d: [\theta]^2 \rightarrow D$ by $d(\a,\b)=A_{\a,\b}.$ Since $2^\k <\theta$ and $\theta$ is weakly compact, we can find $X\in [\theta]^\theta$ and $A_*\in D$ such that for all $\a<\b$ in $X, A_{\a,\b}=A_*.$ For $i\in A_*$ consider the type
\begin{center}
$\Gamma_i=\{f_\a(i) <_J x <_J g_\a(i): \a\in X  \}.$
\end{center}
\begin{claim}
$\Gamma_i$ is finitely satisfiable in $J$.
\end{claim}
\begin{proof}
Let $\a_0 < ... <\a_n$ be in $X$. Then
\begin{center}
$f_{\a_0}(i) <_J ... <_J f_{\a_n}(i) <_J g_{\a_n}(i) <_J ... <_J g_{\a_0}(i).$
\end{center}
So it suffices to pick some element in $(f_{\a_n}(i), g_{\a_n}(i))_J,$ which is possible as $J$ is dense.
\end{proof}
As $J$ is $\theta^+-$saturated, there is $h(i)\in J$ realizing $\Gamma_i.$ So $h(i)$ is defined in this way for every $i \in A_*.$
Let $h: A_* \to J$ be the resulting function. Extend $h$ to a function in $J^\k.$ Then
\begin{center}
$\forall i\in A_*, \forall \a\in X, f_\a(i) <_J h(i) <_J g_\a(i) \Rightarrow \forall\a\in X, f_\a <_D h <_D g_\a.$
\end{center}
Since $X$ is unbounded in $\theta,$ we have $\forall \a<\theta, f_\a <_D h <_D g_\a,$ and we get a contradiction.
\end{proof}
We may note that the use of the partition relation $\theta \to (\theta)^2$ in the above proof is optimal in the following sense: If $\theta$ is strongly inaccessible but not weakly compact, then $\theta \to (\theta, \alpha)^2$ for every $\alpha < \theta,$ yet, this is not enough for excluding the pair $(\theta, \theta)$,
as proved in 3.4.$($b$)$.

The next theorem generalizes the above result.
\begin{theorem}
Suppose that $D$ is an ultrafilter on $\k,$
 $\k<\mu\leq\l, \theta,$ where
 $\mu$ is a supercompact cardinal and $\l, \theta$ are regular.
Then $(\l,\theta)\notin \mathscr{C}(D).$
\end{theorem}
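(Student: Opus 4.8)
The plan is to mimic the proof of Theorem~2.9 (the weakly compact case), replacing the partition-relation argument by an argument using a $\mu$-complete fine normal ultrafilter on $\mathcal{P}_\mu(\lambda+\theta)$, or equivalently the elementary embedding $j\colon V\to M$ with critical point $\mu$, $j(\mu)>\lambda,\theta$, and $M^{\lambda+\theta}\subseteq M$. Suppose toward a contradiction that $J$ is a $(\lambda+\theta)^+$-saturated dense linear order and $\langle\langle f_\alpha/D:\alpha<\lambda\rangle,\langle g_\beta/D:\beta<\theta\rangle\rangle$ witnesses $(\lambda,\theta)\in\mathscr{C}(D)$, with $f_\alpha,g_\beta\in J^\kappa$. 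As in the earlier proofs I would pass to the completion $J'$ of $J$ using Lemma~2.1, so that it suffices to fill the cut in $J'^\kappa/D$. The key point is that $\kappa<\mu$, so $D\in H(\mu)$ and $j\restriction H(\mu)$ is the identity; in particular $j(D)=D$, $j(\kappa)=\kappa$, and $j(J'^\kappa/D)=j(J')^\kappa/D$.

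Next I would transport the cut into $M$ and close it off there. Since $\operatorname{crit}(j)=\mu>\kappa$, for each fixed $\alpha<\lambda$ we have $j(f_\alpha/D)=j(f_\alpha)/D$, and the sequence $\langle j(f_\alpha)/D:\alpha<\lambda\rangle$ lies in $M$ because $M$ is closed under $\lambda$-sequences; similarly $\langle j(g_\beta)/D:\beta<\theta\rangle\in M$. Inside $M$, $j(J')$ is a complete dense linear order (being the completion of $j(J)$ as computed in $M$, and $M$ computes completions correctly for orders of size ${<}j(\mu)$), and $j(J)$ is $j((\lambda+\theta)^+)$-saturated in $M$; since $j((\lambda+\theta)^+)>\lambda+\theta$, and since $\{j(f_\alpha)/D:\alpha<\lambda\}$ and $\{j(g_\beta)/D:\beta<\theta\}$ form a pre-cut in $j(J')^\kappa/D$ of ``size'' $\lambda$ below and $\theta$ above, I can argue in $M$ exactly as in the saturation-witness discussion after Definition~1.6: pointwise on the index set $\kappa$ one realizes the appropriate type in $j(J)$ (using $j((\lambda+\theta)^+)$-saturation and $|j(J)|$-many parameters, which is fine as $\lambda,\theta<j((\lambda+\theta)^+)$), obtaining some $h_*\in j(J)^\kappa$ with $j(f_\alpha)/D<_D h_*<_D j(g_\beta)/D$ for all $\alpha<\lambda$, $\beta<\theta$ — all of this holding in $M$, hence by elementarity describable by a first-order statement about $j$ of the data.

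Then I would pull back. The statement ``there is $h\in J'^\kappa$ with $f_\alpha/D<_D h<_D g_\beta/D$ for all $\alpha<\lambda$ and $\beta<\theta$'' is not literally the image of a single statement in $V$, so instead I would argue directly: the element $h_*\in j(J)^\kappa$ found in $M$ gives, via $h_*=h/D$ for some $h\in j(J)^\kappa$ in $M$, and using that $j$ is the identity on $H(\mu)\ni\kappa,D$ together with $M^{\lambda+\theta}\subseteq M$, a single function filling the cut — but to land back in $V$ I should instead phrase the realization step as: for club-many (indeed all) $i<\kappa$, a suitable type over $J$ is finitely satisfiable in $V$ (this is the direct analogue of Claim~2.10, proved using density of $J$ and the fact that $\langle f_\alpha(i):\alpha<\lambda\rangle$, $\langle g_\beta(i):\beta<\theta\rangle$ interleave correctly on a set in $D$), so that by $(\lambda+\theta)^+$-saturation of $J$ there is $h(i)\in J$ realizing it; the role of supercompactness is precisely to supply, via $j$ and $\mu$-completeness of the underlying ultrafilter, the single set $A_*\in D$ on which the interleaving pattern is uniform, which is what the weak compactness / partition relation provided in Theorem~2.9. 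With such an $h$ we get $f_\alpha<_D h<_D g_\beta$ for all $\alpha,\beta$, contradicting that $\langle\bar f,\bar g\rangle$ is a cut.

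The main obstacle is the ``uniformization'' step: extracting from the supercompactness embedding a \emph{single} $A_*\in D$ (or an equivalent homogeneity statement) that plays the role of the homogeneous set in the weakly compact proof. Concretely, one considers the colouring $(\alpha,\beta)\mapsto A_{\alpha,\beta}:=\{i<\kappa:f_\alpha(i)<_J f_\beta(i)\wedge g_\beta(i)<_J g_\alpha(i)\}\in D$ on pairs from $\lambda$ (and likewise on $\lambda\times\theta$), with at most $2^\kappa<\mu$ colours; supercompactness of $\mu$ (even just the fact that $\mu\to(\mu)^2_{2^\kappa}$ for the relevant index set, or a direct normal-ultrafilter argument on $\mathcal P_\mu(\lambda+\theta)$) yields a set of size $\lambda$ and a set of size $\theta$ on which the colour is constantly some $A_*\in D$. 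Once that homogeneity is in hand, the rest is the routine type-realization-and-pullback argument already carried out in Theorems~2.9 and~2.1; I expect the write-up to reduce the $\mu$-supercompact hypothesis to exactly this partition/ultrafilter statement and then proceed verbatim as in~2.9.
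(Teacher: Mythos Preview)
Your proposal contains a genuine gap in the second paragraph and, once you abandon it, drifts toward a different (workable) argument than the paper's.

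The flaw: you claim that inside $M$ one can ``pointwise on the index set $\kappa$'' realize the type $\{\,j(f_\alpha)(i)<x<j(g_\beta)(i):\alpha<\lambda,\beta<\theta\,\}$ using $j((\lambda+\theta)^+)$-saturation of $j(J)$. But saturation only realizes \emph{finitely satisfiable} types, and for a fixed $i<\kappa$ there is no reason this type is consistent: we only know $f_\alpha(i)<_J g_\beta(i)$ on a set in $D$ depending on $(\alpha,\beta)$. Saturation of $j(J)$ says nothing about filling pre-cuts in $j(J)^\kappa/D$; indeed, by $(*)_3$ of the paper the pair $(\langle j(f_\alpha)\rangle,\langle j(g_\beta)\rangle)$ \emph{is} a genuine pre-cut there. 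So this step is circular.

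Your fallback (paragraphs 3--4), namely to colour $(\alpha,\beta)\mapsto A_{\alpha,\beta}\in D$ and use $\mu$-completeness of a fine ultrafilter on $\lambda\times\theta$ to uniformize to a single $A_*\in D$, is correct and is exactly the proof of the \emph{next} theorem in the paper (Theorem~2.11, the strongly compact case). That argument works under supercompactness too, but it is not the paper's proof of the present theorem.

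The paper's proof of this theorem uses the embedding in a different and more direct way: by elementarity, $j(\bar f),j(\bar g)$ witness a $(j(\lambda),j(\theta))$-pre-cut in $j(J)^\kappa/D$ inside $M$; by closure $M^{\lambda+\theta}\subseteq M$, the short sequences $\langle j(f_\alpha):\alpha<\lambda\rangle,\langle j(g_\gamma):\gamma<\theta\rangle$ also witness a pre-cut in $j(J)^\kappa/D$ inside $M$. Now use the \emph{discontinuity} of $j$: since $\theta\geq\mu$ is regular, $\sup j[\theta]<j(\theta)$, so choose $\delta$ strictly between them. The element $j(\bar g)_\delta\in j(J)^\kappa$ lies strictly below every $j(g_\gamma)=j(\bar g)_{j(\gamma)}$ (as $j(\gamma)<\delta$) and strictly above every $j(\bar f)_\alpha$, hence fills the short cut---contradiction. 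No colouring, no pointwise type realization, no pull-back is needed; the filler is manufactured inside $M$ from the \emph{long} $j$-image sequence itself. The paper keeps this proof precisely because the discontinuity trick survives in forcing contexts (Theorems~5.10 and~5.21) where the uniformization argument of~2.11 does not.
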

\begin{proof}
Suppose not. Let $J$ be a $(\lambda+\theta)^+-$saturated dense linear order, $J \subseteq Ord,$ and let $\langle \langle f_\alpha/D: \alpha<\lambda \rangle,  \langle g_\gamma/D: \gamma<\theta \rangle  \rangle$ witness a pre-cut in $J^\k/D,$ which is a pre-cut in $J_*^\k/D,$ for each linear order $J_* \supseteq J.$

Let $\bar{f}/D=\langle f_\alpha/D: \alpha<\lambda \rangle$ and $\bar{g}/D= \langle g_\gamma/D: \gamma<\theta \rangle.$ Let $\eta =(\l+\theta), U$ be a normal measure on $P_\mu(\eta)$ and let $j: V \rightarrow M\simeq Ult(V, U)$ be the corresponding elementary embedding. So we have $crit(j)=\mu$ and $M^{\eta} \subseteq M.$ It follows that $j(\k)=\k$ and $j(D)=D.$

Note that $j[J]$ is also a $(\lambda+\theta)^+-$saturated dense linear order and for $f\in J^\k, j(f)=j[f]\in j[J]^\k,$ as $|f|=\k < \eta=crit(j).$ It follows that:

$\hspace{1.5cm}$ ``$\langle \langle j(f_\alpha)/D: \alpha<\lambda \rangle,  \langle j(g_\gamma)/D: \gamma<\theta \rangle  \rangle$  witnesses a pre-cut in

$(*)_1$$\hspace{1.cm}$  $j[J]^\k/D,$ which is also a pre-cut in $J_*^\k/D,$ for each linear order

$\hspace{1.5cm}$ $J_* \supseteq j[J]$''.

Also, as $j$ is an elementary embedding, the following hold in $M$:

$\hspace{1.5cm}$ ``$j(J)$ is a $j((\lambda+\theta)^+)-$saturated dense linear order, $j(J) \supseteq j[J],$

$(*)_2$$\hspace{.9cm}$ and $\langle \langle j(\bar{f})_\alpha/D: \alpha<j(\lambda) \rangle,  \langle j(\bar{g})_\gamma/D: \gamma<j(\theta) \rangle  \rangle$ witnesses a pre-

$\hspace{1.5cm}$ cut in $j(J)^\k/D$''.

On the other hand  $M^\eta \subseteq M$, so the sequences $\langle j(f_\alpha)/D: \alpha<\lambda \rangle$ and  $\langle j(g_\gamma)/D: \gamma<\theta \rangle$ are in $M$, and by $(*)_1$, the following holds in $M$:

$(*)_3$ $\hspace{.7cm}$ ``$\langle \langle j(f_\alpha)/D: \alpha<\lambda \rangle,  \langle j(g_\gamma)/D: \gamma<\theta \rangle  \rangle$  witnesses a pre-cut in $j(J)^\k/D$''.

We now derive a contradiction from  $(*)_2$ and $(*)_3.$

Assume that $\theta\geq \l.$ Note that $\sup\{j(\gamma): \gamma< \theta\} < j(\theta),$ so pick $\delta$ such that $\sup\{j(\gamma): \gamma< \theta\} < \delta < j(\theta).$ Consider $j(\bar{f})_\delta \in j(J)^\k.$ Then by $(*)_2$, for all $\a<\l$ and $\gamma<\theta$
\begin{center}
$j(f_\a)=j(\bar{f})_{j(\a)} <_D j(\bar{f})_\delta <_D j(\bar{g})_{j(\gamma)}=j(g_\gamma).$
\end{center}
This contradicts $(*)_3$. The theorem follows.
\end{proof}
In fact we can weaken the above assumptions, as  shown in the next theorem. We have given the above proof, as it appears in later sections of the paper, where the methods of the proof of Theorem 2.11 are not applicable (see Theorem 5.10 and Claim 5.23).
\begin{theorem}
Suppose that $D$ is an ultrafilter on $\kappa$, $\kappa<\mu \leq \lambda, \theta,$ where $\mu$ is a strongly compact cardinal and $\lambda, \theta$ are regular. Then $(\lambda, \theta)\notin \mathscr{C}(D).$
\end{theorem}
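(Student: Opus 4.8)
The plan is to run the proof of Theorem~2.11 almost verbatim, but with the normal measure on $P_\mu(\eta)$ replaced by a fine, $\mu$-complete ultrafilter $U$ on $P_\mu(\eta)$, which exists because $\mu$ is strongly compact and $\eta:=(\lambda+\theta)\ge\mu$. Let $j:V\to M\simeq\Ult(V,U)$ be the induced embedding; then $\crit(j)=\mu$ and ${}^{<\mu}M\subseteq M$, and since $\kappa<\mu$ this already yields ${}^\kappa M\subseteq M$, that $j$ is the identity on $V_\mu$, $j(\kappa)=\kappa$, and $j(D)=D$ (here $2^\kappa<\mu$, so $D\in V_\mu$). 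The price of dropping supercompactness is the loss of $M^\eta\subseteq M$; what survives, by fineness of $U$, is a set $A\in M$ with $j[\eta]\subseteq A$ and $M\models|A|<j(\mu)$.

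Assume toward a contradiction that $\langle\langle f_\alpha/D:\alpha<\lambda\rangle,\langle g_\gamma/D:\gamma<\theta\rangle\rangle$ witnesses $(\lambda,\theta)\in\mathscr{C}(D)$, with $J\subseteq\mathrm{Ord}$ a $(\lambda+\theta)^+$-saturated dense linear order. The first point I would make is that, although $M^\eta\subseteq M$ now fails — so the sequences $\langle j(f_\alpha)/D:\alpha<\lambda\rangle$, $\langle j(g_\gamma)/D:\gamma<\theta\rangle$ need not lie in $M$, and the analogue of $(*)_3$ from the proof of Theorem~2.11 is unavailable — we do not need $(*)_3$. Indeed ${}^\kappa M\subseteq M$ together with $j(J)\in M$ ensures that every function $\kappa\to j(J)$ of $V$ lies in $M$, so $j(J)^\kappa/D$ is literally the same linear order whether computed in $V$ or in $M$. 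Next, $j[J]$ with the order induced from $j(J)$ is isomorphic to $J$ via $j$, hence is a $(\lambda+\theta)^+$-saturated dense linear order with $j[J]\subseteq j(J)$; transporting the cut along this isomorphism, $\langle\langle j\circ f_\alpha/D:\alpha<\lambda\rangle,\langle j\circ g_\gamma/D:\gamma<\theta\rangle\rangle$ is a pre-cut of cofinality $(\lambda,\theta)$ in $j[J]^\kappa/D$. By the density-plus-saturation argument already underlying Lemma~2.1$(a)$ — a pre-cut of cofinality $(\lambda,\theta)$ in $J_0^\kappa/D$, with $J_0$ a $(\lambda+\theta)^+$-saturated dense linear order, stays a pre-cut in $J_*^\kappa/D$ for every linear order $J_*\supseteq J_0$ — this is a pre-cut in $j(J)^\kappa/D$; call this statement $(*)_1$, a statement of $V$.

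To obtain the contradiction, by symmetry of $\mathscr{C}(D)$ I would assume $\lambda\le\theta$, so $\eta=\theta$ and $j(\lambda)\le j(\theta)$. Since $\lambda$ is regular, $\mu\le\lambda\le\eta$, and $j[\lambda]\subseteq A$, in $M$ the set $A\cap j(\lambda)$ has size $<j(\mu)\le j(\lambda)$ while $j(\lambda)$ is regular, so $\sup j[\lambda]<j(\lambda)$. Choose $\delta$ with $\sup j[\lambda]<\delta<j(\lambda)\le j(\theta)$ and set $c:=j(\bar f)_\delta\in{}^\kappa(j(J))$, an element of $M$ and hence of $V$. By elementarity, $(*)_2$ holds: in $M$, $\langle\langle j(\bar f)_\alpha/D:\alpha<j(\lambda)\rangle,\langle j(\bar g)_\gamma/D:\gamma<j(\theta)\rangle\rangle$ is a pre-cut in $j(J)^\kappa/D$. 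Then for $\alpha<\lambda$ we have $j(\alpha)\le\sup j[\lambda]<\delta$, so $j(f_\alpha)/D=j(\bar f)_{j(\alpha)}/D<_D c/D$; and for $\gamma<\theta$, since $\delta<j(\lambda)$ and $j(\gamma)<j(\theta)$, the pre-cut $(*)_2$ gives $c/D<_D j(\bar g)_{j(\gamma)}/D=j(g_\gamma)/D$. Thus $c/D$ lies strictly between $\{j\circ f_\alpha/D:\alpha<\lambda\}$ and $\{j\circ g_\gamma/D:\gamma<\theta\}$ in $j(J)^\kappa/D$, contradicting $(*)_1$.

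The hard part is really only the large-cardinal bookkeeping that makes strong compactness enough: one must check carefully that ${}^\kappa M\subseteq M$ genuinely makes $j(J)^\kappa/D$ absolute between $V$ and $M$ (so that the filler $c/D$ produced inside $M$ is a bona fide counterexample to $(*)_1$ back in $V$), and that mere fineness of $U$ — without normality — already delivers $\sup j[\lambda]<j(\lambda)$ for our regular $\lambda\ge\mu$. Beyond that, the argument is just elementarity together with the density/saturation reasoning already present in Lemma~2.1.
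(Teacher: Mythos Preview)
Your argument is correct and takes a genuinely different route from the paper. The paper does \emph{not} adapt the ultrapower proof of Theorem~2.10; instead it argues combinatorially: choose a $\mu$-complete fine ultrafilter $E$ on $\lambda\times\theta$ with $\{(\bar\alpha,\bar\gamma):\alpha<\bar\alpha,\ \gamma<\bar\gamma\}\in E$ for each $(\alpha,\gamma)$; since $|D|=2^\kappa<\mu$ and $E$ is $\mu$-complete, for each $(\alpha,\gamma)$ there is a single $A_{\alpha,\gamma}\in D$ with $\{(\bar\alpha,\bar\gamma):\alpha<\bar\alpha,\ \gamma<\bar\gamma,\ A_{\alpha,\bar\alpha,\gamma,\bar\gamma}=A_{\alpha,\gamma}\}\in E$; the type $\Gamma_i=\{f_\alpha(i)<_J x<_J g_\gamma(i):i\in A_{\alpha,\gamma}\}$ is then finitely satisfiable, and saturation of $J$ produces the filler directly. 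Your approach instead shows that the embedding method of Theorem~2.10 already works under mere strong compactness: the two key observations are that $\mu$-completeness of $U$ alone gives ${}^{<\mu}M\subseteq M$, hence ${}^\kappa M\subseteq M$, making $j(J)^\kappa/D$ literally the same ordered set in $V$ and in $M$; and that the covering property coming from fineness (rather than the full closure ${}^\eta M\subseteq M$) suffices to bound $j[\lambda]$ below $j(\lambda)$. The paper's argument is more elementary and self-contained (no ultrapower machinery, just a pigeonhole on colors in $D$); yours has the virtue of unifying Theorems~2.10 and~2.11 under a single method---which is worth noting, since the paper explicitly remarks that it retained the ultrapower proof of Theorem~2.10 because that style of argument is what is needed later in Section~5.
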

\begin{proof}
Suppose not. Let $J$ be a $(\lambda+\theta)^+-$saturated dense linear order, and let $\langle \langle f_\alpha/D: \alpha<\lambda \rangle,  \langle g_\gamma/D: \gamma<\theta \rangle  \rangle$ witness $(\lambda, \theta) \in \mathscr{C}(D).$ For $\alpha_1 <\alpha_2 <\lambda$ and $\gamma_1<\gamma_2 <\theta$ set
\begin{center}
$A_{\alpha_1,\alpha_2,\gamma_1,\gamma_2}=\{i<\kappa: f_{\alpha_1}(i)<_J f_{\alpha_2}(i) <_J g_{\gamma_2}(i) <_J g_{\gamma_1}(i) \} \in D.$
\end{center}
Let $E$ be a $\mu-$complete uniform fine ultrafilter on $\lambda\times \theta$ such that
\begin{center}
$(\a, \gamma)\in \lambda\times \theta \Rightarrow \{(\bar{\a}, \bar{\gamma})\in \lambda\times \theta: \a<\bar{\a}, \gamma <\bar{\gamma}   \} \in E.$
\end{center}
We can find such ultrafilter $E,$ as the cardinals $\l, \theta$ are regular $\geq \mu$ and $\mu$ is a strongly compact cardinal. As $E$ is $\mu-$complete and $|D|=2^\k <\mu,$ for each $(\alpha,\gamma) \in \lambda\times\theta,$ there exists a unique set $A_{\alpha,\gamma}\in D$ such that
\begin{center}
$X_{\alpha,\gamma}=\{ (\bar{\alpha}, \bar{\gamma})\in \lambda\times\theta:  \alpha<\bar{\alpha}, \gamma<\bar{\gamma}, A_{\alpha,\bar{\alpha},\gamma, \bar{\gamma}}=A_{\alpha,\gamma}  \} \in E.$
\end{center}
For $i<\kappa$ consider the type
\begin{center}
$\Gamma_i=\{ f_\alpha(i) <_J x <_J g_\gamma(i): \alpha <\lambda$ and $\gamma<\theta$ are such that $i\in A_{\alpha,\gamma} \}.$
\end{center}
\begin{claim}
$\Gamma_i$ is finitely satisfiable in $J$.
\end{claim}
\begin{proof}
Suppose $\alpha_1, ..., \alpha_n <\lambda, \gamma_1, ..., \gamma_n <\theta$ and $i\in A_{\alpha_1,\gamma_1} \cap ... \cap A_{\alpha_n, \gamma_n}.$ Choose $\alpha>\alpha_1, ..., \alpha_n$ and $\gamma>\gamma_1, ..., \gamma_n$ such that for all $1\leq l\leq n, A_{\alpha_l,\alpha,\gamma_l,\gamma}=A_{\alpha_l,\gamma_l}.$ Then
\begin{center}
$f_{\alpha_l}(i) <_J f_\alpha(i) <_J g_\gamma(i) <_J g_{\gamma_l}(i).$
\end{center}
So it suffices to take some $x\in (f_\alpha(i), g_\gamma(i))_J.$
\end{proof}
It follows that $\Gamma_i$ is realized by some $h(i).$ As usual, extend $h$ to a function on $\kappa.$ Then
\begin{center}
$\forall i<\kappa, \forall (\alpha,\gamma)\in \lambda\times\theta (i\in A_{\alpha,\gamma} \Rightarrow f_\alpha(i) <_J h(i) <_J g_\gamma(i)).$
\end{center}
Thus for $\alpha<\lambda, \gamma<\theta,$
\begin{center}
$f_\alpha <_D h <_D g_\gamma,$
\end{center}
and we get a contradiction.
\end{proof}

\begin{definition} (\cite{erdos})
$\begin{pmatrix}\k \\
\l\end{pmatrix}\to\begin{pmatrix}\mu\\\nu\end{pmatrix}^{1,1}_\rho$ means: if $d: \k\times\l \rightarrow \rho,$ then for some $A\subseteq \k$ of order type $\mu$ and some $B\subseteq \l$ of order type $\nu, d \upharpoonright A\times B$ is constant.
\end{definition}
\begin{theorem}
Suppose  $\begin{pmatrix}\l \\
\theta\end{pmatrix}\to\begin{pmatrix}\l\\\theta\end{pmatrix}^{1,1}_{2^\k},$ and $D$ is an ultrafilter on $\k.$ Then $(\l, \theta)\notin \mathscr{C}(D).$
\end{theorem}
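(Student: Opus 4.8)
The plan is to run the same argument as in the proofs of Theorems 2.8 and 2.11, with the hypothesized unbalanced partition relation on $\lambda\times\theta$ playing the role that weak compactness (a partition relation on $[\theta]^2$) or a strongly compact fine ultrafilter played there. Suppose toward a contradiction that $(\lambda,\theta)\in\mathscr{C}(D)$; since members of $\mathscr{C}(D)$ are pairs of regular cardinals, fix a $(\lambda+\theta)^+$-saturated dense linear order $J$ and sequences $\bar f/D=\langle f_\alpha/D:\alpha<\lambda\rangle$, $\bar g/D=\langle g_\gamma/D:\gamma<\theta\rangle$ witnessing that $(\lambda,\theta)$ is the cofinality of a cut of $J^\kappa/D$. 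In particular $f_\alpha<_D g_\gamma$ for all $\alpha<\lambda$, $\gamma<\theta$, so $A_{\alpha,\gamma}:=\{i<\kappa: f_\alpha(i)<_J g_\gamma(i)\}\in D$.

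Next I would color $\lambda\times\theta$ by $d(\alpha,\gamma)=A_{\alpha,\gamma}$. Since $|D|\le 2^\kappa$, the hypothesis $\begin{pmatrix}\lambda\\\theta\end{pmatrix}\to\begin{pmatrix}\lambda\\\theta\end{pmatrix}^{1,1}_{2^\kappa}$ gives $X\subseteq\lambda$ of order type $\lambda$ and $Y\subseteq\theta$ of order type $\theta$ with $d\upharpoonright X\times Y$ constant, with value some $A_*\in D$; thus $f_\alpha(i)<_J g_\gamma(i)$ whenever $i\in A_*$, $\alpha\in X$, $\gamma\in Y$. For $i\in A_*$ I would then consider the type $\Gamma_i=\{f_\alpha(i)<_J x<_J g_\gamma(i):\alpha\in X,\ \gamma\in Y\}$ over $J$. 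Finite satisfiability is exactly where linearity of $J$ enters: given $\alpha_1,\dots,\alpha_n\in X$ and $\gamma_1,\dots,\gamma_m\in Y$, let $f_{\alpha_{k_0}}(i)$ be $<_J$-largest among the $f_{\alpha_k}(i)$ and $g_{\gamma_{l_0}}(i)$ be $<_J$-least among the $g_{\gamma_l}(i)$; then $f_{\alpha_{k_0}}(i)<_J g_{\gamma_{l_0}}(i)$ because $i\in A_{\alpha_{k_0},\gamma_{l_0}}=A_*$, so $\max_k f_{\alpha_k}(i)<_J\min_l g_{\gamma_l}(i)$, and density of $J$ produces a point strictly in between.

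Then by $(\lambda+\theta)^+$-saturation (the parameter set for $\Gamma_i$ has size $\le\lambda+\theta$) each $\Gamma_i$ with $i\in A_*$ is realized by some $h(i)\in J$; extend $h$ arbitrarily to an element of $J^\kappa$. Now $A_*\subseteq\{i<\kappa: f_\alpha(i)<_J h(i)<_J g_\gamma(i)\}$ for all $\alpha\in X$ and $\gamma\in Y$, hence $f_\alpha<_D h<_D g_\gamma$ for such $\alpha,\gamma$. Since a subset of $\lambda$ (resp.\ $\theta$) of order type $\lambda$ (resp.\ $\theta$) is cofinal, and $\bar f$ is $<_D$-increasing while $\bar g$ is $<_D$-decreasing, it follows that $f_\alpha<_D h<_D g_\gamma$ for \emph{all} $\alpha<\lambda$ and $\gamma<\theta$; so $h/D$ fills the cut $\langle\bar f,\bar g\rangle$, a contradiction (one could also phrase the last step via Lemma 2.1(a) after passing to a completion, but it is not needed here). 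I do not expect a serious obstacle; the only points requiring attention are (i) encoding the comparison data $A_{\alpha,\gamma}$ as a single $2^\kappa$-coloring of $\lambda\times\theta$ — rather than of pairs from a single index set, as in the weakly compact case — so that the unbalanced partition relation applies directly, and (ii) the observation that pairwise comparisons $f_\alpha(i)<_J g_\gamma(i)$ already suffice for the finite-satisfiability step, which is special to linear orders.
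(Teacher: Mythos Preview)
Your proposal is correct and follows essentially the same approach as the paper's proof: define $A_{\alpha,\gamma}\in D$, color $\lambda\times\theta$ by $d(\alpha,\gamma)=A_{\alpha,\gamma}$, apply the partition relation to get large homogeneous $X\times Y$ with value $A_*$, use saturation to realize the resulting types $\Gamma_i$, and conclude by cofinality of $X,Y$. The only difference is cosmetic---your finite-satisfiability step is spelled out in a bit more detail than the paper's, which simply asserts it.
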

\begin{proof}
Suppose not. Let $J$ be some $(\l+\theta)^+-$saturated dense linear order and let $\langle \langle f_\alpha/D: \alpha<\l \rangle,  \langle g_\beta/D: \beta<\theta \rangle  \rangle$ witness $(\l, \theta) \in \mathscr{C}(D),$ where $f_\alpha, g_\beta \in J^\k.$ For $\alpha<\l, \beta<\theta$ set
\begin{center}
$A_{\alpha, \beta}=\{i<\k: f_\alpha(i) <_J g_\beta(i)    \}\in D.$
\end{center}
Define $d: \l\times\theta \rightarrow D$ by $d(\a,\b)=A_{\a,\b}.$ By our assumption, there are $A\in [\l]^\l$, $B\in [\theta]^\theta$ and $A_*\in D$
such that for all $\a\in A, \b\in B, A_{\a,\b}=A_*.$ For $i\in A_*$ set
\begin{center}
$\Gamma_i=\{f_\alpha(i) <_J x <_J g_\b(i): \a\in A, \b\in B\}.$
\end{center}
$\Gamma_i$ is easily seen to be finitely satisfiable, and hence it is realized by some $h(i).$ Extend $h$ to a function in $J^\k.$
\begin{claim}
For all $\a<\l, \b<\theta, f_\a<_D h <_D g_\b.$
\end{claim}
\begin{proof}
Let $\a<\l, \b<\theta.$ Choose $\a^*\in A, \b^* \in B$ such that $\a<\a^*, \b<\b^*.$ Then
\begin{center}
$\{i<\k: f_{\a^*}(i) <_J h(i) <_J g_{\b^*}(i)  \} = A_*\in D,$
\end{center}
so $f_\a <_D f_{\a^*} <_D h <_D g_{\b^*} <_D g_\b.$
\end{proof}
We get a contradiction, and the theorem follows.
\end{proof}

\begin{theorem}
Suppose that $D$ is an ultrafilter on $\kappa$ and $\theta, \lambda$ are regular cardinals such that $\theta^\kappa <\lambda.$ Then $(\lambda, \theta) \notin \mathscr{C}(D).$
\end{theorem}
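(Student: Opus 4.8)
The plan is to argue by contradiction via a pigeonhole on the long ($\lambda$) side of the cut, using $\theta^\kappa<\lambda$ to make the relevant invariant cheap enough. Suppose $(\lambda,\theta)\in\mathscr{C}(D)$. By Lemma 2.1 fix a $(\lambda+\theta)^+$-saturated dense linear order $J_*$ with completion $J$, and sequences $\bar f=\langle f_\alpha:\alpha<\lambda\rangle$, $\bar g=\langle g_\gamma:\gamma<\theta\rangle$ in $J_*^\kappa$ with $\langle f_\alpha/D\rangle$ increasing and cofinal in $C_1$ and $\langle g_\gamma/D\rangle$ decreasing and coinitial in $C_2$, for some cut $(C_1,C_2)$ of $J_*^\kappa/D$. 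Note $\lambda>\theta^\kappa\geq\theta,\kappa$.

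First I would pigeonhole. For $\alpha<\lambda$ define $c_\alpha\colon\kappa\to\theta+1$ by letting $c_\alpha(i)$ be the least $\gamma<\theta$ with $g_\gamma(i)\leq_J f_\alpha(i)$, and $c_\alpha(i)=\theta$ if no such $\gamma$ exists. There are only $(\theta+1)^\kappa=\theta^\kappa<\lambda$ such functions, so since $\lambda$ is regular there is $S\in[\lambda]^\lambda$ and a single $c$ with $c_\alpha=c$ for all $\alpha\in S$. The basic observation, used throughout, is that $\{i:c(i)=\gamma\}\notin D$ for every $\gamma<\theta$: otherwise, for any $\alpha\in S$ we would get $g_\gamma(i)\leq_J f_\alpha(i)$ on a $D$-set, contradicting $f_\alpha/D<_D g_\gamma/D$. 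Note also that $\gamma<c(i)$ and $\alpha\in S$ imply $g_\gamma(i)>_J f_\alpha(i)$.

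Now set $T=\{\gamma<\theta:\{i:c(i)>\gamma\}\in D\}$, an initial segment of $\theta$ (this subsumes the ``$\{i:c(i)=\theta\}\in D$'' case, in which $T=\theta$). If $T=\theta$ I would finish directly: for each $i$ the type $\Gamma_i=\{f_\alpha(i)<_J x:\alpha\in S\}\cup\{x<_J g_\gamma(i):\gamma<c(i)\}$ has size $\leq\lambda+\theta$ and is finitely satisfiable in $J$ (all the relevant $f_\alpha(i)$ lie strictly below all the relevant $g_\gamma(i)$, and $J$ is dense), so by saturation it is realized by some $h(i)\in J$; then $h/D>_D f_\alpha/D$ for every $\alpha<\lambda$ by cofinality of $\langle f_\alpha/D:\alpha\in S\rangle$ in $C_1$, while for each $\gamma<\theta$ we have $\{i:h(i)<_J g_\gamma(i)\}\supseteq\{i:c(i)>\gamma\}\in D$, so $h/D<_D g_\gamma/D$; thus $h/D$ fills the cut, contradicting Lemma 2.1.

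The remaining case $T\neq\theta$ is the crux. Put $\delta_*=\sup T<\theta$; using the basic observation one checks $\delta_*$ is a limit ordinal, $\delta_*\notin T$, and $V=\{i:c(i)<\delta_*\}\in D$. Here the restricted type only controls $g_\gamma$ for $\gamma<\delta_*$, so I would pass to the completion $J$ and work with $\mu(i)=\inf\{g_\gamma(i):\gamma<c(i)\}$ computed in $J$: one shows $\mu/D$ lies strictly above $C_1$ in $J^\kappa/D$, hence by Lemma 2.1$(a)$ (the cut is still unfilled there) there is a least $\gamma_1<\theta$ with $\mu/D\geq_D g_{\gamma_1}/D$, and combining this with $\mu/D\leq_D g_\gamma/D$ for $\gamma<\delta_*$ and re-applying the basic observation pins down $\gamma_1$ in a contradictory way. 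I expect the genuinely delicate point of the whole proof to be exactly this last case — in particular excluding the configuration $\gamma_1\geq\delta_*$ — which may well require an extra device, for instance iterating the pigeonhole against the tail $\langle g_{\delta_*+\gamma}:\gamma<\theta\rangle$ of $\bar g$ (still coinitial in $C_2$), or replacing $c_\alpha$ by a somewhat finer invariant of the same cardinality.
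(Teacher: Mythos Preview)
Your argument in the case $T=\theta$ is correct, but the case $T\neq\theta$ is a genuine gap, as you yourself concede. The invariant $c_\alpha(i)=\min\{\gamma:g_\gamma(i)\leq_J f_\alpha(i)\}$ is too coarse: it records only one index per coordinate, while the sequence $\langle g_\gamma(i):\gamma<\theta\rangle$ need not be monotone pointwise. Once you pass to $\mu(i)=\inf\{g_\gamma(i):\gamma<c(i)\}$ and find $\gamma_1\geq\delta_*$ with $g_{\gamma_1}/D\leq_D\mu/D$, you learn only that $g_{\gamma_1}(i)\leq g_\gamma(i)$ for $\gamma<c(i)$ on a $D$-set; this says nothing useful about the relation of $g_{\gamma_1}(i)$ to $f_\alpha(i)$, and there is no evident contradiction to extract. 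Iterating against the tail $\langle g_{\delta_*+\gamma}\rangle$ does not help: each iteration produces a new $\delta_*$ but there is no reason the process terminates with $T=\theta$.

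The paper's proof avoids the case split by using a stronger pigeonhole. Let $I=\{g_\gamma(i):\gamma<\theta,\ i<\kappa\}\cup\{s_{-\infty},s_{+\infty}\}$, so $|I^\kappa|=\theta^\kappa<\lambda$. Rather than stabilizing a $(\theta{+}1)$-valued invariant, the paper stabilizes the \emph{full cut of $I^\kappa$} determined by $f_\alpha/D$: since the sets $\{g\in I^\kappa:g<_D f_\alpha\}$ are $\subseteq$-increasing in $\alpha$ and there are fewer than $\lambda$ of them, there is $\beta_*<\lambda$ such that for every $g\in I^\kappa$ and every $\beta\geq\beta_*$, $g<_D f_\beta\iff g<_D f_{\beta_*}$ (and dually). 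Now define $g_*(i)$ in the completion $J$ as the greatest lower bound of $\{t\in I:f_{\beta_*}(i)\leq_{J_*}t\}$. Then $f_{\beta_*}\leq_D g_*$ trivially; $g_*\leq_D g_\gamma$ because $g_\gamma(i)$ sits in the set whose glb is $g_*(i)$ on a $D$-set; and if some $f_\alpha$ with $\alpha\geq\beta_*$ had $g_*<_D f_\alpha$, one could read off from the glb a $g\in I^\kappa$ with $f_{\beta_*}\leq_D g<_D f_\alpha$, contradicting the stabilization. The point is that the ``finer invariant'' you allude to is exactly the position of $f_\alpha/D$ relative to all of $I^\kappa$, and the bound $|I^\kappa|<\lambda$ is precisely what makes this affordable.
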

\begin{proof}
Suppose not. Let $J_*$ be a $\lambda^+-$saturated dense linear order and suppose that $(C_1, C_2)$ is a cut of $J_*^\kappa/D$ with cofinality $(\lambda, \theta).$ Also let $J$ be the completion of $J_*$. Let $\langle \langle f_\alpha/D: \alpha<\lambda \rangle,  \langle g_\gamma/D: \gamma<\theta \rangle  \rangle$ witness $cf(C_1, C_2)=(\lambda, \theta),$ where $f_\alpha, g_\gamma \in J_*^\kappa, \alpha<\lambda, \gamma<\theta.$ By $\lambda^+-$saturation of $J_*$ and the remarks after Definition 1.1, we can find $s_{-\infty}, s_{+\infty} \in J_*$ such that for all $\alpha<\lambda, \gamma<\theta$ and $i<\kappa,$
\begin{center}
$s_{-\infty} <_{J_*} f_\alpha(i), g_\gamma(i) <_{J_*} s_{+\infty}.$
\end{center}
Let $I=\{g_\gamma(i): \gamma<\theta, i<\kappa  \}\cup \{s_{-\infty}, s_{+\infty}   \}.$ Then $|I|^\kappa=(\theta+\k)^\k  =\theta^\k <\lambda$ and $g_\gamma \in I^\kappa$ for all $\gamma<\theta.$
\begin{claim}
There is $\beta_*<\lambda$ such that for all $g\in I^\kappa$ and $\beta \in [\beta_*, \lambda):$
\begin{itemize}
\item $g <_D f_\beta \Leftrightarrow g <_D f_{\beta_*},$
\item $f_\beta <_D g \Leftrightarrow f_{\beta_*} <_D g.$
\end{itemize}
\end{claim}
\begin{proof}
Suppose not. So we can find an increasing sequence $\langle  \beta_\xi: \xi<\lambda  \rangle$ of ordinals $<\lambda$ and a sequence $\langle  g_\xi: \xi<\lambda  \rangle$
of elements of $I^\kappa$ such that for all $\xi<\lambda, f_{\beta_\xi} \leq_D g_\xi <_D f_{\beta_{\xi+1}}.$

It follows that for $\xi < \zeta <\lambda, g_\xi \neq g_\zeta,$ hence $|I|^\kappa \geq \lambda,$ a contradiction.
\end{proof}
Fix $\beta_*$ as above. We define a function $g_* \in J^\kappa$ as follows: Let $i<\kappa.$ Consider the set
\begin{center}
$I_i=\{ t\in I: f_{\beta_*}(i)\leq_{J_*} t \}.$
\end{center}
We have $s_{+\infty} \in I_i$ and $I_i$ is bounded from below , so as $J$ is complete,
\begin{center}
$g_*(i)=$the $<_J-$greatest lower bound of $I_i$
\end{center}
is well-defined, so $g_*\in J^\kappa.$ It is clear that for all $i<\kappa, f_{\beta_*}(i) \leq_J g_*(i)$ so

$(*)$ $\hspace{5.cm}$ $f_{\beta_*} \leq_D g_*.$

We show that for all $\alpha<\lambda, \gamma<\theta,$ $f_\alpha \leq_D g_* \leq_D g_\gamma,$ which will give us the desired contradiction.
\begin{claim}
For all $\alpha<\lambda, f_\alpha \leq_D g_*.$
\end{claim}
\begin{proof}
Since $\langle f_\alpha/D: \alpha<\lambda \rangle$ is increasing, we may suppose that $\alpha \in [\beta_*,\lambda).$ Suppose on the contrary that $g_* <_D f_\alpha.$ So
\begin{center}
$u=\{i<\kappa: g_*(i) <_J f_\alpha(i)  \} \in D.$
\end{center}
For $i\in u, g_*(i) <_J f_\alpha(i),$ so by our definition of $g_*(i),$ we can find $g(i)\in [g_*(i), f_\alpha(i))_{J_*}\cap I.$ For $i\in \kappa\setminus u$ set $g(i)=s_{+\infty}.$ Then $g\in I^\kappa$ and $g_* \leq_D g <_D f_\alpha.$ By $(*)$, $f_{\beta_*} \leq_D g <_D f_\alpha$ which is in contradiction with Claim 2.17.
\end{proof}
\begin{claim}
For all $\gamma<\theta, g_* \leq_D g_\gamma.$
\end{claim}
\begin{proof}
Fix any ordinal $\gamma < \theta$. Since $f_{\beta_*} <_D g_\gamma,$ we have
\begin{center}
$u=\{i<\kappa: f_{\beta_*}(i) <_J g_\gamma(i)  \} \in D.$
\end{center}
Hence for $i\in u, g_\gamma(i) \in I_i$ and it follows that $g_*(i) \leq_{J_*} g_\gamma(i).$ So $g_* \leq_D g_\gamma.$
\end{proof}
Since $g_* \neq_D f_\alpha$ for every
$\alpha < \lambda$ (as $\langle f_\alpha / D: \alpha < \lambda      \rangle$ is increasing) and $g_* \neq_D g_\gamma$ for every
 $\gamma < \theta$ (by a similar argument), we have  $\forall \alpha<\lambda \forall \gamma<\theta, f_\alpha \leq_D g_* \leq_D g_\gamma,$   and we get a contradiction. The theorem follows.
\end{proof}

Recall that the \emph{singular cardinals hypothesis} ($SCH$) says that if $2^{cf(\k)}<\k,$ then $\k^{cf(\k)}=\k^+.$ It follows from $SCH$ that if $\theta < \lambda$ are regular cardinals and $ \l> 2^\k,$ then $\theta^\k <\l$ (see \cite{jech}, Theorem 5.22). The following corollary follows from Theorem 2.16.
\begin{corollary}
$(a)$ ($GCH$) Suppose that $D$ is an ultrafilter on $\kappa$ and $\theta\leq \lambda$ are regular cardinals such that $\l>\kappa^+.$ If $(\lambda, \theta)\in \mathscr{C}(D),$ then $\lambda=\theta.$

$(b)$ ($SCH$) Suppose that $D$ is an ultrafilter on $\kappa$ and $\theta\leq \lambda$ are regular cardinals such that $\l> 2^\k.$ If $(\lambda, \theta)\in \mathscr{C}(D),$ then $\lambda=\theta.$
\end{corollary}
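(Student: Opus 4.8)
The plan is to deduce both parts directly from Theorem 2.16, the only extra ingredient being the elementary cardinal arithmetic needed to convert the hypotheses ``$\lambda$ large relative to $\kappa$, and $\theta<\lambda$'' into the single hypothesis ``$\theta^\kappa<\lambda$'' that Theorem 2.16 requires. In particular I would use the consequence of $SCH$ recorded just before the statement: if $\theta<\lambda$ are regular and $\lambda>2^\kappa$, then $\theta^\kappa<\lambda$ (cf.\ \cite{jech}, Theorem 5.22).

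First I would prove $(b)$. Assume $SCH$, let $D$ be an ultrafilter on $\kappa$, let $\theta\leq\lambda$ be regular with $\lambda>2^\kappa$, and suppose $(\lambda,\theta)\in\mathscr{C}(D)$. If $\lambda=\theta$ there is nothing to prove, so assume toward a contradiction that $\theta<\lambda$. Then $\theta<\lambda$ are regular and $\lambda>2^\kappa$, so the cited consequence of $SCH$ gives $\theta^\kappa<\lambda$, whence Theorem 2.16 yields $(\lambda,\theta)\notin\mathscr{C}(D)$, a contradiction; hence $\lambda=\theta$. If one wishes to avoid citing Jech's theorem as a black box, the inequality $\theta^\kappa<\lambda$ can be argued on the spot by a case split: if $\theta\leq 2^\kappa$ then $\theta^\kappa\leq(2^\kappa)^\kappa=2^\kappa<\lambda$, while if $\theta>2^\kappa$ then in particular $\theta>\kappa$, so $\cf(\theta)=\theta>\kappa$ and $SCH$ gives $\theta^\kappa=\theta<\lambda$.

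For $(a)$, observe that $GCH$ implies $SCH$ and that under $GCH$ we have $2^\kappa=\kappa^+$, so the hypothesis $\lambda>\kappa^+$ is precisely $\lambda>2^\kappa$; thus $(a)$ is simply the instance of $(b)$ that holds under $GCH$, and the conclusion $\lambda=\theta$ follows. I do not anticipate any real obstacle: all the work lies in Theorem 2.16, and the corollary merely isolates the two familiar hypotheses ($GCH$; or $SCH$ together with $\lambda>2^\kappa$) under which ``$\theta<\lambda$'' suffices to reach ``$\theta^\kappa<\lambda$''. The single point deserving a moment's attention is the $\theta\leq 2^\kappa$ versus $\theta>2^\kappa$ distinction underlying the $SCH$ computation, which is routine.
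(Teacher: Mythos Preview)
Your proposal is correct and follows exactly the route the paper takes: the paper simply records the consequence of $SCH$ (that $\theta<\lambda$ regular with $\lambda>2^\kappa$ implies $\theta^\kappa<\lambda$, citing \cite{jech}, Theorem~5.22) and then states that the corollary follows from Theorem~2.16, without spelling out the contrapositive or the reduction of $(a)$ to $(b)$ via $2^\kappa=\kappa^+$ under $GCH$. Your write-up merely fills in those routine details.
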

The next corollary follows from Theorems 2.11 and 2.16:
\begin{corollary}
Suppose that $D$ is an ultrafilter on $\kappa$, $\mu>\k$ is strongly compact and $(\theta, \sigma)\in \mathscr{C}(D).$ Then $\theta, \sigma <\mu,$ in particular $\mathscr{C}(D)$ is a set.
\end{corollary}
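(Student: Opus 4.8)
The plan is to combine Theorems 2.11 and 2.16, using nothing more than the symmetry of $\mathscr{C}(D)$ and the fact that a strongly compact cardinal is inaccessible. By Remark 1.4$(a)$ it suffices to prove the one-sided statement: whenever $(\theta,\sigma)\in\mathscr{C}(D)$, one has $\theta<\mu$. Applying this both to $(\theta,\sigma)$ and to $(\sigma,\theta)$ then gives $\theta,\sigma<\mu$, whence $\mathscr{C}(D)\subseteq\mu\times\mu$ and in particular $\mathscr{C}(D)$ is a set.

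To prove the one-sided statement, I would argue by contradiction: suppose $(\theta,\sigma)\in\mathscr{C}(D)$ with $\theta\geq\mu$. Recall that $\theta$ and $\sigma$ are regular infinite cardinals and that $\kappa<\mu$. I would then split into two cases according to the size of $\sigma$.

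If $\sigma\geq\mu$, then $\kappa<\mu\leq\theta,\sigma$ with $\theta,\sigma$ regular, so Theorem 2.11 (applied with $\lambda:=\theta$ and with its ``$\theta$'' taken to be $\sigma$) yields $(\theta,\sigma)\notin\mathscr{C}(D)$, a contradiction. If $\sigma<\mu$, I would use that $\mu$, being strongly compact, is measurable and hence inaccessible, so in particular a strong limit cardinal; since $\sigma,\kappa<\mu$ this gives $\sigma^\kappa\leq 2^{\sigma+\kappa}<\mu\leq\theta$, and now Theorem 2.16 (applied with $\lambda:=\theta$ and with its ``$\theta$'' taken to be $\sigma$) yields $(\theta,\sigma)\notin\mathscr{C}(D)$, again a contradiction. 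In either case we are done.

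There is essentially no hard step: the corollary is a bookkeeping combination of the two earlier theorems. The only points that deserve a word of care are $(i)$ that the two coordinates of a member of $\mathscr{C}(D)$ are regular cardinals, so that Theorems 2.11 and 2.16 genuinely apply, and $(ii)$ the implication $\sigma<\mu\Rightarrow\sigma^\kappa<\mu$, which rests on $\mu$ being a strong limit, itself a standard consequence of strong compactness. Once $\theta,\sigma<\mu$ is established, the inclusion $\mathscr{C}(D)\subseteq\mu\times\mu$ is immediate, so $\mathscr{C}(D)$ is a set.
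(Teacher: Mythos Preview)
Your proposal is correct and is exactly the argument the paper has in mind: the paper states only that the corollary ``follows from Theorems 2.11 and 2.16,'' and your case split (both coordinates $\geq\mu$ handled by Theorem~2.11, one coordinate $<\mu$ handled by Theorem~2.16 via $\sigma^\kappa<\mu$ from strong-limitness) is the intended way to combine them.
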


\begin{theorem}
Suppose $D$ is an ultrafilter on $\kappa,$ $\kappa <\lambda=cf(\lambda),$ and suppose that $(*)^{\partial, \bar{n}}_{\lambda, \theta}$ holds for $\theta=2^\kappa, \partial\leq \aleph_0, \bar{n} \leq \omega,$ where:

$\hspace{1.5cm}$ If $c:[\lambda]^2 \rightarrow \theta,$ then there are $u\subseteq \theta, |u|< 1+ \partial$ and $S\in [\lambda]^\lambda$ such that

$(*)^{\partial, \bar{n}}_{\lambda, \theta}:$$\hspace{.4cm}$ if $\alpha<\beta$ are in $S$, then for some $n< 1+\bar{n}$ and $\gamma_0<\gamma_1 <...<\gamma_n$ we

$\hspace{1.5cm}$  have $\gamma_0=\alpha, \gamma_n=\beta$ and for $l<n, c\{\gamma_l,\gamma_{l+1}\}\in u.$

Then $(\lambda,\lambda)\notin \mathscr{C}(D).$
\end{theorem}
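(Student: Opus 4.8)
The plan is to mimic the structure of the proofs of Theorems 2.13 and 2.16, where a partition/coloring hypothesis is used to thin out the witnessing sequences enough that the relevant types become finitely satisfiable, and then invoke saturation to fill the cut. So, assuming toward a contradiction that $(\lambda,\lambda)\in\mathscr{C}(D)$, I would fix a $\lambda^+$-saturated dense linear order $J$ and a witnessing pair $\langle\langle f_\alpha/D:\alpha<\lambda\rangle,\langle g_\alpha/D:\alpha<\lambda\rangle\rangle$ of a cut in $J^\kappa/D$. For $\alpha<\beta<\lambda$ the set $A_{\alpha,\beta}=\{i<\kappa: f_\alpha(i)<_J f_\beta(i)<_J g_\beta(i)<_J g_\alpha(i)\}$ is in $D$, so define $c:[\lambda]^2\to\theta$ (with $\theta=2^\kappa$, or rather $c$ valued in $\mathcal{P}(\kappa)$ which has size $\theta$) by $c\{\alpha,\beta\}=A_{\alpha,\beta}$.

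Now I would apply $(*)^{\partial,\bar n}_{\lambda,\theta}$ to this coloring to obtain $u\subseteq\theta$ with $|u|<1+\partial\leq\aleph_1$ (so $u$ is countable) and $S\in[\lambda]^\lambda$ such that any two points of $S$ are connected by a short $c$-path ($\leq\bar n+1$ steps) using only colors in $u$. Writing $u=\{A^k_*:k<|u|\}\subseteq D$ and letting $A_*=\bigcap_{k<|u|}A^k_*$, since $D$ is only $\aleph_1$-incomplete in general I cannot conclude $A_*\in D$ — so instead I would argue pointwise. The key observation should be: if $\alpha<\beta$ are in $S$ and $i\in A_*$ (or more carefully, if $i$ lies in all the colors along the connecting path), then chaining the inequalities $f_{\gamma_l}(i)<_J f_{\gamma_{l+1}}(i)<_J g_{\gamma_{l+1}}(i)<_J g_{\gamma_l}(i)$ along the path $\alpha=\gamma_0,\dots,\gamma_n=\beta$ yields $f_\alpha(i)<_J f_\beta(i)<_J g_\beta(i)<_J g_\alpha(i)$ for \emph{all} pairs from $S$, on the common domain. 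Hence for $i\in A_*$ the type $\Gamma_i=\{f_\alpha(i)<_J x<_J g_\alpha(i):\alpha\in S\}$ is linearly ordered and finitely satisfiable by density of $J$; realize it by $h(i)$, extend $h$ to all of $\kappa$, and conclude $f_\alpha<_D h<_D g_\alpha$ for all $\alpha\in S$. Since $A_*\in D$ only requires $D$ to be $|u|^+$-complete, which fails in general, the honest route is: for each pair $\alpha<\beta$ in $S$, the path has $\leq\bar n$ edges each colored in the fixed countable set $u$, and the \emph{finite} intersection of the relevant colors along a \emph{single} path is in $D$ since $D$ is an ultrafilter (finite intersections); but as $S$ ranges over pairs the paths vary, so instead I would note that for a \emph{fixed finite} subset $\{\alpha_0<\dots<\alpha_m\}$ of $S$, there are only finitely many pairs, hence finitely many paths, hence finitely many colors involved, whose intersection $B\in D$ is nonempty; picking $i\in B$ gives $f_{\alpha_j}(i)<_J f_{\alpha_{j+1}}(i)<_J g_{\alpha_{j+1}}(i)<_J g_{\alpha_j}(i)$, so the finite sub-type $\{f_{\alpha_j}(i)<_J x<_J g_{\alpha_j}(i):j\le m\}$ is realized — but this gives finite satisfiability of each $\Gamma_i$ only for $i$ in a pair-dependent set, which is not uniform. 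The clean fix, as in 2.16, is to use that $\partial\leq\aleph_0$ so $|u|$ is finite or countable and work with $A_*=\bigcap u$: if $u$ is finite then $A_*\in D$ automatically; the case $|u|=\aleph_0$ requires a small extra argument, most likely reducing $\bar n$ and $u$ by a further pigeonhole on $S$, or simply observing the principle's strength already ensures $u$ can be taken finite when $\partial\le\aleph_0$ means $|u|<1+\partial\le\aleph_0$ hence $|u|$ finite.

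Given that reading, the argument is short: $|u|$ finite $\Rightarrow A_*=\bigcap u\in D$; for $i\in A_*$ and any $\alpha<\beta$ in $S$, the connecting path witnesses $f_\alpha(i)<_J f_\beta(i)<_J g_\beta(i)<_J g_\alpha(i)$ (telescoping the path inequalities, all valid at $i$ since each edge color contains $i$); so $\Gamma_i$ is a type over a linearly ordered family, finitely satisfiable by density; realize by $h(i)$; then $\forall\alpha\in S$, $f_\alpha<_D h<_D g_\alpha$; since $S$ is unbounded in $\lambda$ and the sequences are monotone, $\forall\alpha<\lambda$, $f_\alpha<_D h<_D g_\alpha$, contradicting that the pair witnesses a cut.

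The main obstacle I anticipate is exactly the bookkeeping around $|u|$ and the completeness of $D$: one must be careful that $|u|<1+\partial$ with $\partial\le\aleph_0$ genuinely delivers a \emph{finite} $u$ (so that $\bigcap u\in D$), and that the path-length bound $\bar n\le\omega$ interacts correctly — if $\bar n=\omega$ the paths can be arbitrarily long finite sequences, which is harmless for the telescoping since each path is still finite, but one should double-check that the colors along each individual path all lie in the fixed finite $u$, which is exactly what the principle guarantees. A secondary, purely expository point is matching the orientation conventions ($f$'s increasing and below, $g$'s decreasing and above) so the chained inequalities compose in the right direction; this is routine but must be stated correctly to make density applicable.
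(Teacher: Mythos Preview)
Your proposal is correct and follows essentially the same approach as the paper's proof: define the coloring $c\{\alpha,\beta\}=A_{\alpha,\beta}$, apply the partition principle to obtain $u$ and $S$, observe that $u$ is finite so $A_*=\bigcap u\in D$, telescope the path inequalities at each $i\in A_*$ to see the type $\Gamma_i$ is finitely satisfiable, realize it by $h(i)$, and derive the contradiction. Your detour worrying about countable $u$ is unnecessary --- the hypothesis $\partial\le\aleph_0$ gives $|u|<1+\partial\le\aleph_0$, so $u$ is finite outright, exactly as you eventually note; the paper simply states ``a finite set $u$'' without comment.
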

\begin{remark} (\cite{shelah4}, Remark 2.3)
If $\kappa<\mu\leq\lambda=cf(\lambda), (\forall \a<\lambda) |\a|^\k <\l$ and $\mu$ is a strongly compact cardinal, then the following holds:

$\hspace{1.cm}$ If $c:[\lambda]^2 \rightarrow \k,$ then there are $i,j<\k$ and $S\in [\lambda]^\lambda$ such that for all

$\hspace{1.cm}$ $\alpha<\beta$ in $S$, there is $\alpha<\gamma<\beta$ such that $c\{\alpha,\gamma\}=i$ and $c\{\gamma,\beta\}=j.$

Hence $(*)^{2,2}_{\lambda,\k}$ holds.
\end{remark}
The following example shows that we can not remove the assumption $(\forall \a<\lambda) |\a|^\k <\l$ from Remark 2.23.
\begin{example}
Suppose that:

$\hspace{1.cm}$$(a)$ $\l=cf(\l) >\mu > \theta=cf(\mu),$

$\hspace{1.cm}$$(b)$ $\bar{\l}=\langle  \l_i: i<\theta \rangle $ is an increasing  unbounded sequence of regular cardinals in

$\hspace{1.5cm}$$(\theta, \mu),$

$\hspace{1.cm}$$(c)$ $D$ is an ultrafilter on $\theta,$

$\hspace{1.cm}$$(d)$ $\l=tcf(\prod_{i<\theta}\l_i, <_D),$ as witnessed \footnote{By \cite{shelah-g}, if $\l=\mu^+,$ then there exist a sequence $\bar{\l}$ as in $(b)$ and an ultrafilter $D$ on $\theta,$ such that $D$  contains all co-bounded subsets of $\theta$ and $\l=tcf(\prod_{i<\theta}\l_i, <_D)$.} by the scale $\bar{f}=\langle f_\a: \a<\l  \rangle.$

$\hspace{1.cm}$$(e)$ $c: [\l]^2 \rightarrow D$ is defined such that for $\a<\b<\l, f_\a \upharpoonright c\{\a,\b\} < f_\b \upharpoonright c\{\a,\b\}.$

Then for every $\mathcal{U}\in [\l]^\l$ and every finite $u \subseteq D$ there are some $\a<\b$ in $\mathcal{U}$ such that for no $\a=\gamma_0 < \gamma_1 < ... <\gamma_n=\b$ do we have $l<n \Rightarrow c\{\gamma_l, \gamma_{l+1}\}\in u.$

To see this, suppose that the claim fails. Pick $\xi\in \bigcap\{ A: A\in u\}$. Then for all $\a<\b$ in $\mathcal{U},$ we can find $\a=\gamma_0 < \gamma_1 < ... <\gamma_n=\b$ such that $l<n \Rightarrow c\{\gamma_l, \gamma_{l+1}\}\in u.$ But then $\xi\in \bigcap\{c\{\gamma_l, \gamma_{l+1}\}: l<n\},$ and hence
\begin{center}
 $f_\a(\xi)=f_{\gamma_0}(\xi) <f_{\gamma_1}(\xi)< ...< f_{\gamma_{n-1}}(\xi) < f_{\gamma_n}(\xi)=f_\b(\xi).$
\end{center}
Thus the sequence $\langle f_\a(\xi):\xi\in \mathcal{U} \rangle$ is a strictly increasing sequence of ordinals in $\l_\xi.$ But $\l_\xi < \l=|\mathcal{U}|,$ and we get a contradiction.
\end{example}

\begin{proof} (of Theorem 2.22).
Suppose not. Let $J$ be a $\lambda^+-$saturated dense linear order, and let  $\langle \langle f_\alpha/D: \alpha<\lambda \rangle,  \langle g_\a/D: \a<\lambda \rangle  \rangle$ witness $(\lambda, \lambda) \in \mathscr{C}(D).$ For $\alpha<\gamma<\lambda$ set
\begin{center}
$A_{\alpha,\gamma}=\{i<\kappa: f_\alpha(i) <_J f_\gamma(i) <_J g_\gamma(i) <_J g_\alpha(i) \} \in D.$
\end{center}
Define $c:[\lambda]^2  \rightarrow D$ by
\begin{center}
$c\{\alpha,\gamma \}=A_{\alpha,\gamma}.$
\end{center}
By $(*)^{\partial, \bar{n}}_{\lambda, 2^\kappa}$ we can find a finite set $u\subseteq D$ and a set $S\in[\lambda]^\lambda$ such that
if $\alpha<\beta$ are in $S$, then for some $n< 1+\bar{n}$ and $\alpha=\gamma_0< ... <\gamma_n=\beta$ we have $l<n \Rightarrow A_{\gamma_l, \gamma_{l+1}} \in u.$ Since
$u$ is finite, $A=\bigcap u$ belongs to $D$.
It follows immediately that for $\alpha<\beta$ in $S$ and $i\in A,$ we have

$(*)$ $\hspace{4.cm}$$f_\alpha(i) <_J f_\beta(i) <_J g_\beta(i) <_J g_\alpha(i).$

For $i\in A$ set
\begin{center}
$\Gamma_i=\{ f_\alpha(i) <_J x <_J g_\alpha(i): \alpha\in S  \}.$
\end{center}
By$(*)$, $\Gamma_i$ is finitely satisfiable, so it is realized by some $h(i)\in J.$ Then $h\in J^\kappa,$ and for all $\alpha<\lambda, f_\alpha<_D h <_D g_\alpha,$ and we get a contradiction.
\end{proof}

\section{on $\mathscr{C}(D)$ being large}
In this section we show that under some extra set theoretic assumptions, $\mathscr{C}(D)$ can be large. In particular, we show that if $V=L$, then $\mathscr{C}(D)$ can be a proper class. The following is proved in \cite{malliaris-shelah2}:
\begin{theorem} (\cite{malliaris-shelah2} Claim 10.1)
Suppose $\k$ is a measurable cardinal, and $D$ is a normal measure on $\kappa.$ Then $(\k^+, \k^+) \in \mathscr{C}(D).$
\end{theorem}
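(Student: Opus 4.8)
The statement to prove is Theorem 3.1 (labelled as \cite{malliaris-shelah2} Claim 10.1): if $\kappa$ is measurable and $D$ is a normal measure on $\kappa$, then $(\kappa^+, \kappa^+) \in \mathscr{C}(D)$.

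\medskip

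The plan is to build a cut of cofinality $(\kappa^+,\kappa^+)$ in $J^\kappa/D$ directly, exploiting normality via the identity function. First I would fix a $\kappa^{++}$-saturated dense linear order $J$; by Remark 1.4 and symmetry it suffices to produce a $\kappa^+$-increasing sequence $\langle f_\alpha/D : \alpha<\kappa^+\rangle$ and a $\kappa^+$-decreasing sequence $\langle g_\beta/D : \beta<\kappa^+\rangle$ with $f_\alpha <_D g_\beta$ for all $\alpha,\beta$, such that nothing in $J^\kappa$ lies strictly between them. The natural idea is to let the functions "track" the diagonal: because $D$ is normal, for $f\in \kappa^\kappa$ we have $f <_D \mathrm{id}$ iff $f(i)<i$ on a $D$-set iff (by Fodor) $f$ is constant on a $D$-set, i.e. $f =_D c_\xi$ for some $\xi<\kappa$. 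So the ordinals of $J$ (identifying $J$, or a cofinal/coinitial copy of it, with an interval of ordinals) that are $<_D \mathrm{id}$ from below are exactly represented by the constants, giving a naturally ordered system indexed by $\kappa$ sitting below $\mathrm{id}/D$. To get cofinality $\kappa^+$ rather than $\kappa$, I would instead work one level up: use the ultrapower $M = \mathrm{Ult}(V,D)$ with embedding $j\colon V\to M$, $\mathrm{crit}(j)=\kappa$, and inside $M$ pick an increasing continuous sequence converging to $\kappa = [\mathrm{id}]_D$ from below of length $j(\kappa)$... more simply, exploit that $(\kappa^+)^M = $ some ordinal, and $j(\kappa^+)$ has cofinality $\kappa^+$ in $V$ while $\sup j[\kappa^+] < j(\kappa^+)$.

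\medskip

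Concretely, here is the construction I would carry out. Let $J$ be a $\kappa^{++}$-saturated dense linear order; WLOG $J$ is a complete dense linear order (by Lemma 2.1) and contains a closed unbounded-from-both-sides copy of the ordinals below some large $\theta$; the key point is that $J^\kappa/D \supseteq \theta^\kappa/D$ order-embeds, and in the ultrapower $M=\mathrm{Ult}(V,D)$ the order $(\theta^\kappa/D, <_D)$ is isomorphic to $(j(\theta), <)^M$. Pick a cardinal like $\theta = \kappa^+$ and look at $j(\kappa^+)$. In $V$, $\mathrm{cf}(j(\kappa^+)) = \kappa^+$ — because $M$ is closed under $\kappa$-sequences and $j(\kappa^+)$ is regular in $M$ but not in $V$ (as $|j(\kappa^+)| = (\kappa^+)^\kappa = \kappa^+$ under, say, a pinch of cardinal arithmetic — actually $|2^\kappa{}^\kappa|$; one must be slightly careful, but $j(\kappa^+) < (2^\kappa)^+$ and the cofinality computation works because $M^\kappa\subseteq M$ forces $\mathrm{cf}^V(j(\kappa^+))>\kappa$, while it's $\le \kappa^+$ since $|j(\kappa^+)|=\kappa^+$). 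Meanwhile $\sup j[\kappa^+]$ is an ordinal of cofinality $\kappa^+$ that is strictly below $j(\kappa^+)$, since $j(\kappa^+)$ has cofinality $\kappa^+ > \kappa = \mathrm{crit}(j)$ in $M$... actually $\sup j[\kappa^+] < j(\kappa^+)$ precisely because $\mathrm{cf}^M(j(\kappa^+)) = j(\kappa^+) > \kappa$ fails to be continuous — rather, because each $j(\alpha)<j(\kappa^+)$ and the sup of a $\kappa^+$-sequence can't reach a point of $M$-cofinality $j(\kappa^+)$ when... hmm, I need $\mathrm{cf}^M(j(\kappa^+))$: it's $j(\kappa^+)$ in $M$, and $\sup j[\kappa^+]$ has true cofinality $\kappa^+$, and $\kappa^+ < j(\kappa^+)$, so indeed $\sup j[\kappa^+] < j(\kappa^+)$. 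Now take $\langle f_\alpha/D : \alpha<\kappa^+\rangle$ to represent the increasing sequence $\langle j(\alpha) : \alpha<\kappa^+\rangle$ in $j(\kappa^+)$, which is cofinal in the ordinal $\delta := \sup j[\kappa^+]$; and take $\langle g_\beta/D : \beta<\kappa^+\rangle$ to represent a strictly decreasing sequence in $M$ coinitial in $(j(\kappa^+) \setminus \delta)$ of length $\kappa^+$ — this exists in $V$ because the coinitiality (downward cofinality) of $j(\kappa^+)\setminus\delta$, computed in $V$, is $\kappa^+$: in $M$ it's an interval $[\delta, j(\kappa^+))$ whose left endpoint $\delta$ has $V$-cofinality $\kappa^+$, and we take any decreasing sequence approaching $\delta$ from above. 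The pair $(\{f_\alpha/D\}, \{g_\beta/D\})$ is then a pre-cut of $(j(\kappa^+),<)^M \cong (\kappa^+)^\kappa/D$, with cofinality $(\kappa^+,\kappa^+)$, and it does not get filled: anything filling it would be an ordinal of $M$ in $[\delta, j(\kappa^+))$ below all $g_\beta$, but the $g_\beta$ are coinitial there, contradiction. Finally transfer this cut to $J^\kappa/D$ via the order-embedding of $\kappa^+$ (hence of $(\kappa^+)^\kappa/D$) into $J$, using $\kappa^{++}$-saturation to guarantee $J$ accommodates it as a genuine cut, and apply Lemma 2.1(b)/(a) to move between $J$ and $J_*$; symmetry of $\mathscr{C}(D)$ handles the ordering of the pair.

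\medskip

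The main obstacle I anticipate is the careful bookkeeping around which ultrapower-ordinal to use and verifying the two cofinality computations in $V$: I need to be certain that $\mathrm{cf}^V(\sup j[\kappa^+]) = \kappa^+$ (clear, it's a sup of an increasing $\kappa^+$-sequence with no shorter cofinal subsequence since $j$ is order-preserving and $\kappa^+$ is regular) and that the downward cofinality of the tail $[\delta, j(\kappa^+))^M$ in $V$ is also exactly $\kappa^+$ — this is where normality of $D$ and the closure $M^\kappa\subseteq M$ do the real work, ruling out cofinality $\le\kappa$ (a $\kappa$-sequence approaching $\delta$ from above would lie in $M$ and witness $\mathrm{cf}^M(\delta)\le\kappa$, but $\delta=\sup j[\kappa^+]$ and in $M$ any cofinal map into $\delta$ of length $\le\kappa$ would pull back... one shows $\mathrm{cf}^M(\delta) = \mathrm{cf}^M(j(\kappa^+))$ is large or handles it via $j(\kappa)=\kappa$-completeness). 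A cleaner route, which I'd actually prefer to write up, is to avoid $M$ entirely and argue combinatorially in $\kappa^\kappa/D$: let $f_\alpha$ for $\alpha<\kappa^+$ enumerate (mod $D$) a $<_D$-increasing sequence cofinal in $\{h/D : h <_D \mathrm{id}_{\kappa^+\text{-version}}\}$... but since $\mathrm{cf}(\mathrm{id}/D) $ below is only $\kappa$ (Fodor), one is forced to the $j(\kappa^+)$ level anyway, so the ultrapower picture is unavoidable. I'd therefore present the ultrapower argument and spend the bulk of the proof on the two cofinality lemmas, citing the closure of $M$ and normality for the lower bounds.
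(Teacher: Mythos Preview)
The paper does not give its own proof of this theorem; it merely cites Claim~10.1 of Malliaris--Shelah. So there is no in-paper argument to compare against. That said, your proposed argument has a genuine gap.

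You work inside the ordinal $j(\kappa^+)\cong(\kappa^+)^\kappa/D$ and try to exhibit a $(\kappa^+,\kappa^+)$-pre-cut at $\delta=\sup j[\kappa^+]$. But $j(\kappa^+)$ is a well-order, so the interval $[\delta,j(\kappa^+))$ has a \emph{least} element, namely $\delta$ itself; its downward cofinality is $1$, not $\kappa^+$. Concretely: $\delta$ is an ordinal, hence $\delta\in M$, and $\delta$ satisfies $j(\alpha)<\delta<g_\beta$ for every $\alpha<\kappa^+$ and every $g_\beta>\delta$ you might choose. Thus $\delta$ fills your would-be cut. Embedding into a dense $J$ does not help: the image of $\delta$ under the induced embedding $(\kappa^+)^\kappa/D\hookrightarrow J^\kappa/D$ still sits strictly between the $f_\alpha$'s and the $g_\beta$'s. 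Your sentence ``anything filling it would be an ordinal of $M$ in $[\delta,j(\kappa^+))$ below all $g_\beta$, but the $g_\beta$ are coinitial there, contradiction'' is exactly where the error lives: $\delta$ is such an ordinal, and coinitiality in $(\delta,j(\kappa^+))$ does not exclude $\delta$.

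The correct route (and the one the paper's own Theorem~3.3 generalizes) is a direct inductive construction of $f_\alpha,g_\alpha\in J^\kappa$ for $\alpha<\kappa^+$, arranging pointwise nesting along suitable clubs, and at limit stages of cofinality $\kappa$ performing a \emph{diagonal} step: with $\langle\alpha_i:i<\kappa\rangle$ cofinal in $\alpha$, set (e.g.) $f_\alpha(i)=f_{\alpha_i}(i)$ and $g_\alpha(i)=f_{\alpha_{i+1}}(i)$. Normality of $D$ (Fodor) is what makes this diagonal kill every potential filler $h$: one finds a single stage $\alpha$ at which $h$ is squeezed out on a set in $D$. Your ultrapower picture correctly identifies that something special happens at cofinality $\kappa$, but the cut must be \emph{built} in $J^\kappa/D$, not read off from a gap in an ordinal.
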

We can use the method of the proof of Theorem 2.3, to remove the normality assumption from the above theorem. More precisely we have the following:
\begin{theorem}
Suppose $\k$ is a measurable cardinal, and $D$ is a uniform $\k-$complete ultrafilter on $\kappa.$ Then $(\k^+, \k^+) \in \mathscr{C}(D).$
\end{theorem}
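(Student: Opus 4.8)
The plan is to imitate the proof of Theorem~2.3, but now in the ``positive'' direction: instead of deriving a contradiction from the existence of a cut, we manufacture a cut of cofinality $(\k^+,\k^+)$ in $J^\k/D$ out of the canonical embedding $j\colon \fA_1 \to \fA_2 = \fA_1^\k/D$, where $\fA_1 = (H(\chi),\in)$ for a large regular $\chi$. The key point, exactly as in Claim~2.5, is that $\crit(j) = \k$ and $j$ is the identity on $H(\k)$; moreover, since $D$ is $\k$-complete, $\fA_2$ is well-founded, so we may take it to be transitive and $j$ to be a genuine elementary embedding with $j(\k) > \k$. Since $D$ is uniform, every $f\colon \k \to \k$ with unbounded range gives $j(f)(\k)$ an ordinal $\ge \k$; in particular, choosing $f = \id$ we get $\k \le [\id]_D < j(\k)$, and for each $\a < \k$ the ordinal $j(\a) = \a < \k \le [\id]_D$.

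The construction itself: fix a $\k^+$-saturated dense linear order $J$ (work inside $\fA_1$, so $J \in H(\chi)$), and consider $j(J)$, which $\fA_2$ thinks is a $j(\k^+)$-saturated dense linear order, hence in particular $\k^+$-saturated from the point of view of $V$ as long as $\k^+ < j(\k^+)$. Now I want a pre-cut of $(j(J))$ in $\fA_2$ ``at level $[\id]_D$'': pick, using $\k^+$-saturation in $\fA_2$, a $<_{j(J)}$-increasing sequence $\bar s = \langle s_\a : \a < \k^+\rangle$ and a $<_{j(J)}$-decreasing sequence $\bar t = \langle t_\b : \b < \k^+\rangle$ of elements of $j(J)$ with $s_\a <_{j(J)} t_\b$ for all $\a,\b$, and moreover realizing a non-filled pre-cut — this is possible because $j(J)$ has cuts of cofinality $(\k^+,\k^+)$ in $\fA_2$ by elementarity (e.g.\ reflect that $J$ has such a pair using $\k^+$-saturation, or build one directly as an $\fA_2$-internal object since $\k^+ \in j(\k^+)$ is below the saturation). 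Then pull back: for each $\a < \k^+$, since $s_\a \in j(J)$ and $\fA_2 = \fA_1^\k/D$, write $s_\a = [F_\a]_D$ with $F_\a \colon \k \to J$, and set $f_\a = F_\a \in J^\k$; similarly $t_\b = [G_\b]_D$, $g_\b = G_\b$. The induced sequences $\langle f_\a/D : \a < \k^+\rangle$ and $\langle g_\b/D : \b < \k^+\rangle$ in $J^\k/D$ are then respectively $<_D$-increasing and $<_D$-decreasing with every $f_\a <_D g_\b$, because these relations are expressible in $\fA_1$ and transfer through $j$ and the \L o\'s-type correspondence between $\fA_2$ and $\fA_1^\k/D$.

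The heart of the matter is to show that this pair actually \emph{witnesses a cut} of $J^\k/D$, i.e.\ there is no $h \in J^\k$ with $f_\a <_D h <_D g_\b$ for all $\a,\b < \k^+$. This is where uniformity of $D$ plus the ``diagonal'' placement at $[\id]_D$ does the work, as in the argument following Claim~2.7: given a putative $h$, the element $[h]_D \in \fA_2$ would be an element of $j(J)$ lying strictly between all $s_\a$ and all $t_\b$; but $[h]_D = j(\langle h(i) : i<\k\rangle)([\id]_D) = j(H)([\id]_D)$ for the appropriate $H$, and one argues (using that the pre-cut $\langle \bar s,\bar t\rangle$ was chosen unfilled in $\fA_2$, together with elementarity controlling which elements of $j(J)$ are of the form $j(H)(\mu)$ for $\mu < j(\k)$) that no such element exists. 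Concretely, I would phrase it: $\fA_2 \models$ ``the pre-cut $\langle \bar s, \bar t\rangle$ is not filled'', which directly contradicts the existence of $[h]_D$ in the gap. The main obstacle, and the step I would spend the most care on, is making the pulled-back sequences have cofinality (and downward cofinality) exactly $\k^+$ on the $J^\k/D$ side — one must check that applying $j^{-1}$-style decoding does not collapse the length, which follows because $<_D$-increasingness is preserved and because $\k^+ < j(\k)$ guarantees the whole sequence of length $\k^+$ survives as an honest sequence in $\fA_2$; the regularity of $\k^+$ then transfers back. A clean alternative, paralleling Theorem~2.3 most closely, is to start from a known cut of $J_*^\k/D$ of cofinality $(\k^+,\k^+)$ only after first \emph{producing} one this way, but since Theorem~3.1 already gives us $(\k^+,\k^+) \in \mathscr{C}(D)$ for normal $D$, I would instead reduce to that case: given the uniform $\k$-complete $D$, the ultrapower embedding $j_D$ factors through $j_{D'}$ for a normal measure $D'$ on $\k$ via a map that is the identity below $\k^+$ on the relevant orders, and a cut of cofinality $(\k^+,\k^+)$ for $D'$ transfers to one for $D$; fleshing out this factorization and checking it respects the linear-order structure up to $\k^+$ is then the real content.
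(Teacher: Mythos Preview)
Your Approach A has a genuine error at its core. You propose to find, ``in $\fA_2$,'' an unfilled $(\k^+,\k^+)$-pre-cut of $j(J)$ and then pull it back. But take $J$ to be $\k^{++}$-saturated (as the definition of $\mathscr{C}(D)$ requires). Then $\fA_2 \models$ ``$j(J)$ is $j(\k^{++})$-saturated,'' and since $\k^+ < j(\k^{++})$, the model $\fA_2$ sees \emph{no} unfilled $(\k^+,\k^+)$-pre-cuts in $j(J)$ whatsoever. Your justification ``by elementarity, since $J$ has such a pair'' is exactly backwards: $\k^{++}$-saturation of $J$ means $J$ has \emph{no} $(\k^+,\k^+)$-cuts, and elementarity transfers this absence. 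What you actually need is a $V$-external $(\k^+,\k^+)$-cut in $j(J)$ --- a pre-cut indexed by a sequence not in $\fA_2$ --- and producing such a thing is precisely the content of the theorem. You cannot get it for free from elementarity; the whole difficulty is that the sequence of length $\k^+$ lives outside $\fA_2$, so the internal saturation of $j(J)$ says nothing about whether it is filled.

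Your fallback Approach B (factor $j_D$ through $j_{D'}$ for the derived normal $D'$ and transfer a cut) is not the paper's method, and the transfer step is not justified. It is true that $\crit(k) > \k^+$, but $k$ is not the identity on $j_{D'}(J)$, and a $V$-external cut of $j_{D'}(J)$ is not an $M_{D'}$-object, so elementarity of $k$ does not tell you the image remains unfilled in $j_D(J)$. You would need a separate argument for why no element of $j_D(J) \setminus \ran(k)$ fills the gap, and you do not give one.

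The paper's intended route is different: take the actual construction used for Theorem~3.1 (which builds the sequences $f_\a, g_\a$ for $\a < \k^+$ explicitly, with a diagonal definition at limits of cofinality $\k$, in the spirit of Theorem~3.3 specialized to $\l = \k^+$) and observe that the one place normality enters is in handling a $\k$-indexed family of sets in $D$ at the verification stage. The ``method of Theorem~2.3'' --- specifically Claim~2.5, which shows that any $\k$-sequence from $V$ is coded by a single element of $\fA_2$ --- lets you replace that use of normality by an argument inside the ultrapower, using only $\k$-completeness and uniformity. You never build the cut abstractly inside $\fA_2$; you build it concretely in $J^\k$ and use the ultrapower only to verify that nothing fills it.
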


We now state and prove the main result of this section.
\begin{theorem}
Suppose that:

$\hspace{1.cm}$$(a)$ $D$ is a uniform ultrafilter on $\kappa,$

$\hspace{1.cm}$$(b)$ We have  $2^\kappa <\lambda=cf(\lambda)$,

$\hspace{1.cm}$$(c)$ The sequence $\bar{C}=\langle C_\alpha: \alpha<\lambda \rangle$ satisfies

$\hspace{1.5cm}$$(\alpha)$ $C_\alpha\subseteq\alpha,$

$\hspace{1.5cm}$$(\beta)$ $lim(\alpha) \Rightarrow sup(C_\alpha)=\alpha,$

$\hspace{1.5cm}$$(\gamma)$ $\beta\in C_\alpha \Rightarrow C_\beta=C_\alpha\cap\beta,$

$\hspace{1.5cm}$$(\delta)$ If $\alpha$ is a successor ordinal, then either $C_\alpha$ is empty, or has a last element.

$\hspace{1.cm}$$(d)$ $S=\{\delta<\lambda: otp(C_\delta)=\kappa, \delta\notin \bigcup_{\alpha<\lambda}C_\alpha \}$ is a stationary subset \footnote{Note that $S$ is necessarily non-reflecting.}  of $\lambda$,

$\hspace{1.cm}$$(e)$ There exists a sequence $\langle (a_\delta, \xi_\delta): \delta\in S  \rangle$ such that:

$\hspace{1.5cm}$$(\alpha)$ We have  $a_\delta\subseteq nacc(C_\delta)$,

$\hspace{1.5cm}$$(\beta)$ $otp(a_\delta)=\kappa,$

$\hspace{1.5cm}$$(\gamma)$ $\xi_\delta<2^\kappa,$

$\hspace{1.5cm}$$(\delta)$ For every $h: \lambda\rightarrow 2^\kappa,$ there is some $\delta\in S$ such that $h\upharpoonright a_\delta$ is constantly $\xi_\delta.$

Then $(\lambda,\lambda)\in \mathscr{C}(D).$
\end{theorem}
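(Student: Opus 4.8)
The plan is to build a witnessing pair $\langle \bar f, \bar g\rangle$ for the cut directly, using the $\square$-like sequence $\bar C$ together with the guessing sequence $\langle(a_\delta,\xi_\delta):\delta\in S\rangle$ to organize a coherent system of functions $f_\alpha, g_\alpha\in J^\kappa$ indexed by $\alpha<\lambda$. First I would fix a $\lambda^+$-saturated dense linear order $J$; by Lemma~2.1 it suffices to produce a genuine pre-cut of cofinality $(\lambda,\lambda)$, and we are free to pass to the completion when convenient. The idea is to recursively define $f_\alpha, g_\alpha$ so that for $\beta<\alpha$ we have $f_\beta <_D f_\alpha <_D g_\alpha <_D g_\beta$, i.e.\ the $f$'s are $<_D$-increasing, the $g$'s are $<_D$-decreasing, and $f_\alpha <_D g_\alpha$ throughout; the coherence condition $\beta\in C_\alpha\Rightarrow C_\beta=C_\alpha\cap\beta$ is what lets the recursion go through at limit stages by ``reading off'' the behaviour along $C_\alpha$.

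The heart of the construction is the successor/limit step at ordinals $\delta\in S$. At such a $\delta$, $\mathrm{otp}(C_\delta)=\kappa$, so the increasing enumeration of (a cofinal piece of) $C_\delta$ — more precisely of $a_\delta\subseteq\mathrm{nacc}(C_\delta)$, which has order type exactly $\kappa$ — gives a canonical surjection of a set of size $\kappa$ onto an unbounded subset of $\delta$. I would use this to code, into a single function on $\kappa$, the information $\langle f_{\alpha_i}: i<\kappa\rangle$ where $\langle\alpha_i:i<\kappa\rangle$ enumerates $a_\delta$; one then picks $f_\delta\in J^\kappa$ pointwise realizing, at coordinate $i$, a value that dominates the relevant previously-chosen values, using the density and $\lambda^+$-saturation of $J$ exactly as in the realization arguments of Theorems 2.8, 2.11, 2.16, 2.22 (build a finitely-satisfiable type $\Gamma_i$ squeezing between the $f$'s and $g$'s seen so far and realize it). The diamond-style guessing clause $(e)(\delta)$ — ``for every $h:\lambda\to 2^\kappa$ there is $\delta\in S$ with $h\restriction a_\delta$ constantly $\xi_\delta$'' — is what will be invoked to defeat any candidate filling of the cut: given a putative $h\in J^\kappa$ filling $\langle\bar f,\bar g\rangle$, one associates to $h$ a coloring $\lambda\to 2^\kappa$ recording, for each $\alpha$, which of the $\le 2^\kappa$ possible ``patterns'' of the pair $(\{i: f_\alpha(i)<_J h(i)\},\{i:h(i)<_J g_\alpha(i)\})$ occurs, and then the guessed $\delta\in S$ yields a contradiction because along $a_\delta$ the pattern is frozen, forcing $h$ to be squeezed out at stage $\delta$ by the way $f_\delta, g_\delta$ were constructed.

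The main obstacle I anticipate is verifying that the recursion is actually coherent and does not break down — i.e.\ that at limit $\alpha$ (including those not in $S$, where clause $(c)(\beta)$ gives $\sup C_\alpha=\alpha$) one can define $f_\alpha$ dominating $\{f_\beta:\beta<\alpha\}$ while staying $<_D$-below $\{g_\beta:\beta<\alpha\}$, and that this never collapses the would-be cut (one must keep $\mathrm{cf}$ of the left side and $\mathrm{dcf}$ of the right side both equal to $\lambda$, not accidentally to some smaller regular cardinal, nor fill it prematurely). Handling the successor ordinals via clause $(c)(\delta)$ and keeping track of the ``last element'' cases is a bookkeeping nuisance but routine. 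The genuinely delicate point is matching the cardinal arithmetic: there are only $2^\kappa<\lambda$ possible patterns, $|a_\delta|=\kappa$, and $\xi_\delta<2^\kappa$, so the guessing has exactly enough room — but one has to set up the coding so that ``$h\restriction a_\delta$ constantly $\xi_\delta$'' translates precisely into ``$h$ is trapped at $\delta$,'' and this is where the coherence $C_\beta=C_\alpha\cap\beta$ and the non-reflecting stationarity of $S$ must be used carefully to ensure the construction at $\delta$ genuinely saw the restriction of $h$'s pattern to $a_\delta$.
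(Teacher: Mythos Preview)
Your overall architecture matches the paper's: build $\langle f_\alpha, g_\alpha\rangle_{\alpha<\lambda}$ by recursion so that $f_\beta <_D f_\alpha <_D g_\alpha <_D g_\beta$ for $\beta<\alpha$, use the coherence of $\bar C$ to pass through limits $\alpha\notin S$ by realizing the type $\Gamma_i=\{f_\beta(i)<_J x<_J y<_J g_\beta(i):\beta\in C_\alpha\}$, and then use clause $(e)(\delta)$ to defeat any candidate filler. But there is a genuine gap at the decisive step, namely how $f_\delta, g_\delta$ are defined for $\delta\in S$.

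You propose to ``pick $f_\delta$ pointwise realizing, at coordinate $i$, a value that dominates the relevant previously-chosen values \dots\ build a finitely-satisfiable type $\Gamma_i$ squeezing between the $f$'s and $g$'s seen so far and realize it.'' This is the \emph{wrong} move at $\delta\in S$. The purpose of stages in $S$ is not to continue the nested pattern via saturation but to plant a trap: one sets
\[
f_\delta(i)=f_{\alpha_{\delta,i}}(i),\qquad g_\delta(i)=f_{\alpha_{\delta,i+1}}(i),
\]
where $\langle\alpha_{\delta,i}:i<\kappa\rangle$ enumerates $a_\delta$. That is, $g_\delta$ is built diagonally from the $f$-values, not from the $g$'s and not by realizing a type. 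This is exactly what makes the endgame work: if $f_*$ fills the cut and $h(\alpha)$ encodes the element of $D$ given by $\{i:f_\alpha(i)<_J f_*(i)<_J g_\alpha(i)\}$ (via a fixed enumeration $\langle A_\xi:\xi<2^\kappa\rangle$ of $D$), then the guessing yields $\delta\in S$ for which this set equals $A_{\xi_\delta}$ for every $\alpha\in a_\delta$; in particular for $\alpha=\alpha_{\delta,i+1}$, so for $i\in A_{\xi_\delta}$ one gets $g_\delta(i)=f_{\alpha_{\delta,i+1}}(i)<_J f_*(i)$, whence $g_\delta<_D f_*$, a contradiction. If instead you define $g_\delta$ by realizing a type below all prior $g_\beta$'s, no such conclusion is available: the frozen pattern tells you where $f_*$ sits relative to $f_{\alpha_{\delta,i}}$ and $g_{\alpha_{\delta,i}}$, but says nothing about its position relative to your type-realized $g_\delta$.

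Two further points you will need once you adopt the diagonal definition. First, you must verify that $f_\delta, g_\delta$ so defined still satisfy $f_\beta<_D f_\delta<_D g_\delta<_D g_\beta$ for $\beta<\delta$; this is where the \emph{uniformity} of $D$ enters (so that $[j,\kappa)\in D$), together with the pointwise monotonicity along $C_\delta$ guaranteed by clause~$(c)$ at earlier stages --- legitimate because $\alpha_{\delta,i}\in a_\delta\subseteq\mathrm{nacc}(C_\delta)$ gives $\alpha_{\delta,i}\in C_{\alpha_{\delta,i+1}}$ and $\alpha_{\delta,i}\notin S$. Second, the pointwise clause you need at non-$S$ stages is not merely $<_D$-monotonicity but genuine pointwise nesting $f_\beta(i)<_J f_\alpha(i)<_J g_\alpha(i)<_J g_\beta(i)$ for $\beta\in C_\alpha$ and all $i<\kappa$; this is what makes $\Gamma_i$ finitely satisfiable at later limits and what feeds the diagonal inequality $f_{\alpha_{\delta,i}}(i)<_J f_{\alpha_{\delta,i+1}}(i)$ at $\delta\in S$.
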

\begin{proof}
Let $J$ be a $\lambda^+-$saturated dense linear order. We shall choose the functions $f_\alpha, g_\alpha\in J^\kappa, \alpha<\lambda$ such that $\langle  \langle f_\alpha/D:\alpha<\lambda \rangle, \langle g_\alpha/D: \alpha<\lambda \rangle   \rangle$ witnesses a cut of $J^\kappa/D$ of cofinality $(\lambda, \lambda).$ More specifically we shall choose the functions $f_\alpha, g_\alpha$ such that:

$\hspace{1.cm}$$(a)$ $\forall i<\kappa, f_\alpha(i)<_J g_\alpha(i),$

$\hspace{1.cm}$$(b)$ for $\beta<\alpha<\lambda, f_\beta <_D f_\alpha <_D g_\alpha <_D g_\beta,$

$\hspace{1.cm}$$(c)$ if $\alpha\notin S$ and $\beta\in C_\alpha,$ then  $\forall i<\kappa, f_\beta(i) <_J f_\alpha(i) <_J g_\alpha(i) <_J g_\beta(i),$

$\hspace{1.cm}$$(d)$ if $\delta\in S,$ and $\langle \alpha_{\delta,i}: i<\kappa \rangle$ enumerates $a_{\delta}$ in increasing order, then
\begin{center}
$\forall i<\kappa, f_\delta(i)=f_{\alpha_{\delta,i}}(i)$ and  $g_\delta(i)= f_{\alpha_{\delta, i+1}}(i).$
\end{center}
We do the construction by induction on $\alpha<\lambda.$

{\bf Case 1. $\alpha=0$:}
Let $f_0, g_0\in J^\kappa$ be such that for all $i<\kappa, f_0(i) <_J g_0(i).$

{\bf Case 2. $\alpha=\gamma+1$ is a successor ordinal:} By our assumption $\langle  \langle f_\xi:\xi\leq\gamma \rangle, \langle g_\xi: \xi\leq \gamma \rangle   \rangle$
is defined. We define $f_\alpha, g_\alpha.$
\begin{itemize}
\item {\bf Subcase 2.1. $C_\alpha=\emptyset:$} Then as $J$ is $\l^+-$saturated dense linear order, we can choose $f_\alpha, g_\alpha\in J^\kappa$ such that:

$\hspace{1.cm}$$(\alpha)$ for all $i<\kappa, f_\gamma(i) <_J f_\alpha(i) <_J g_\alpha(i) <_J g_\gamma(i).$

It is easily seen that  $(a)-(c)$ are satisfied, and  $(d)$ is vacuous.
\item {\bf Subcase 2.2. $C_\alpha$ has a last element $\delta$:}  Then we take $f_\alpha, g_\alpha\in J^\kappa$ as above with the additional property:

$\hspace{1.cm}$$(\beta)$ for all $i<\kappa, f_\delta(i) <_J f_\alpha(i) <_J g_\alpha(i) <_J g_\delta(i).$

Again it is easily seen that $(a)-(c)$ are satisfied, and  $(d)$ is vacuous.
\end{itemize}

{\bf Case 3. $\alpha$ is a limit ordinal $\alpha\notin S$:} We take $f_\alpha, g_\alpha\in J^\kappa$ such that:

$\hspace{1.cm}$$(\alpha)$ $\forall i<\kappa, f_\alpha(i) <_J g_\alpha(i),$

$\hspace{1.cm}$$(\beta)$ for $\beta<\alpha, f_\beta <_D f_\alpha <_D g_\alpha <_D g_\beta,$

$\hspace{1.cm}$$(\gamma)$ if $\beta\in C_\alpha,$ then $\forall i<\kappa, f_\beta(i) <_J f_\alpha(i) <_J g_\alpha(i) <_J g_\beta(i).$

For $i<\kappa,$ consider the type
\begin{center}
$\Gamma_i=\{f_\beta(i)<_J x <_J y <_J g_\beta(i): \beta\in C_\alpha     \}.$
\end{center}
By clause $(c)$ and using the induction hypothesis, $\Gamma_i$ is finitely satisfiable, so it is realized by some $f_\alpha(i) <_J g_\alpha(i).$ Clearly $(a)-(c)$ are satisfied and there is nothing to do with $(d)$.

{\bf Case 4. $\delta\in S$:} Then we define $f_\delta, g_\delta$ as in $(d),$ so $(d)$ holds. We must show that $(a)-(c)$ are also satisfied. Sice $(c)$
 is vacuous  in this case, it suffices to consider $(a)$ and $(b)$. For $(a),$ we have for any $i<\kappa,$
\begin{center}
$f_\delta(i)=f_{\alpha_{\delta,i}}(i) <_J f_{\alpha_{\delta,i+1}}(i)=g_\delta(i), $
\end{center}
by $(c),$ and the fact that $\alpha_{\delta,i} < \alpha_{\delta,i+1}$ are in $a_\delta \subseteq C_\delta,$ so $\alpha_{\delta,i} \in C_{\alpha_{\delta,i+1}}$
and hence $\alpha_{\delta,i} \notin S,$ so $(c)$ applies.

We check $(b).$ So suppose that $\beta < \delta.$ Since $\delta=sup_{j<\kappa}\alpha_{\delta,j},$ we can find $j<\kappa$ such that $\beta<\alpha_{\delta,j}.$ Then $f_\beta <_D f_{\alpha_{\delta,j}} <_D g_{\alpha_{\delta,j}} <_D g_\beta$ and hence
\begin{center}
$u=\{ i<\kappa: f_\beta(i) <_J f_{\alpha_{\delta,j}}(i) <_J g_{\alpha_{\delta,j}}(i) <_J g_\beta(i) \} \in D.$
\end{center}
Since $D$ is uniform, $[j, \kappa)\in D,$ so $u\cap [j, \kappa)\in D.$ We show that
\begin{center}
$i\in u\cap [j, \kappa) \Rightarrow  f_\beta(i) <_J f_{\delta}(i) <_J g_{\delta}(i) <_J g_\beta(i).$
\end{center}
So let $i\in u\cap [j, \kappa).$ Then
\begin{itemize}
\item $f_\delta(i)=f_{\alpha_{\delta,i}}(i) \geq_J f_{\alpha_{\delta,j}}(i) >_J f_\beta(i),$
\item $f_\delta(i) <_J g_\delta(i),$ by $(a)$,
\item $g_\delta(i)=f_{\alpha_{\delta,i+1}}(i) <_J g_{\alpha_{\delta,i+1}}(i) <_J g_{\alpha_{\delta,j}}(i) <_J g_\beta(i).$
\end{itemize}
This completes our construction. We show that $\langle  \langle f_\alpha/D:\alpha<\lambda \rangle, \langle g_\alpha/D: \alpha<\lambda \rangle   \rangle$ witnesses a cut of $J^\kappa/D$. Suppose not. So there is $f_*\in J^\kappa$ such that for all $\alpha<\lambda, f_\alpha <_D f_* <_D g_\alpha.$ Let $\langle A_\xi : \xi<2^\kappa \rangle$ be an enumeration of $D$, and define $h:\lambda \rightarrow 2^\kappa$ by
\begin{center}
$h(\alpha)=\xi \Leftrightarrow A_\xi=\{i<\kappa:  f_\alpha(i) <_J f_*(i) <_J g_\alpha(i)\}.$
\end{center}
By our assumption there is $\delta\in S$ such that $h \upharpoonright a_\delta$ is constantly $\xi_\delta.$ This means that for all $j<\kappa, h(\alpha_{\delta,j})=\xi_\delta$, i.e.
\begin{center}
$A_{\xi_\delta}=\{ i<\kappa:  f_{\alpha_{\delta,j}}(i) <_J f_*(i) <_J g_{\alpha_{\delta,j}}(i)     \}.$
\end{center}
 Then for all $i\in A_{\xi_\delta},$
\begin{center}
$g_\delta(i)=f_{\alpha_{\delta,i+1}}(i) <_J f_*(i),$
\end{center}
and hence $g_\delta <_D f_*,$ a contradiction. It follows that  $(\lambda,\lambda)\in \mathscr{C}(D),$ and the theorem follows.
\end{proof}
In the next lemma we produce models in which the assumptions in Theorem 3.3 are satisfied.
\begin{lemma}
$(a)$ Assume $GCH$. Then there is a cardinal preserving generic extension of $V$ in which there are $\bar{C}, S$ and $\langle (a_\delta, \xi_\delta): \delta\in S  \rangle$ as above.

$(b)$ If we have a square  for $\lambda$, where $\lambda$ is a successor cardinal or a limit but not weakly compact cardinal, then we can manipulate to have $(c)-(d).$  In particular the above hypotheses are valid, if $V=L$ and $\lambda>\kappa^+$ is not weakly compact.
\end{lemma}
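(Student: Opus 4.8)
The plan is to fix a regular $\lambda>2^{\kappa}$ (e.g.\ $\lambda=(2^{\kappa})^{+}$) and to force the whole package $\langle\bar C,S,\langle(a_\delta,\xi_\delta):\delta\in S\rangle\rangle$ at once with a ``$\square$-type'' forcing $\mathbb{P}$. A condition $p$ approximates it up to some bound $\gamma_p<\lambda$: a sequence $\langle C^p_\alpha:\alpha\le\gamma_p\rangle$ satisfying $(c)(\alpha)$--$(c)(\delta)$ clause by clause, the induced set $S^p$ of ``candidates'' $\delta\le\gamma_p$ with $otp(C^p_\delta)=\kappa$ lying on no thread, and for $\delta\in S^p$ a pair with $a^p_\delta\subseteq nacc(C^p_\delta)$ of order type $\kappa$ and $\xi^p_\delta<2^{\kappa}$; one orders $\mathbb{P}$ by end-extension, freezing candidates and their pairs. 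The routine part is: $\mathbb{P}$ is ${<}\lambda$-closed (forming lower bounds at limits of descending chains is the usual coherent-continuation step, painless here since $(c)$ puts no bound on $otp(C_\alpha)$), and $|\mathbb{P}|=\lambda$ under $GCH$, so $\mathbb{P}$ has the $\lambda^{+}$-c.c.; hence all cardinals are preserved, no new subsets of $\kappa$ appear, and $2^{\kappa}<\lambda=cf(\lambda)$ survives. Density of $\{p:\gamma_p\ge\gamma\}$ makes $\bar C=\bigcup_{p\in G}\bar C^p$ total, giving $(c)$; and $S$ is stationary by the standard argument --- given a name $\dot E$ for a club of $\lambda$, build a descending $\kappa$-chain deciding longer and longer initial segments of $\dot E$ with tops inside $\dot E$, then equip the supremum $\delta$ with a $C_\delta$ of order type $\kappa$, declaring it a candidate.

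\medskip
\noindent The hard part, and the step I expect to be the main obstacle, is clause $(e)(\delta)$. Given a name $\dot h$ for a map $\lambda\to2^{\kappa}$ and a condition $p$, I would first establish a pigeonhole-over-cofinality lemma: since $\mathbb{P}$ is ${<}(2^{\kappa})^{+}$-closed and $\lambda=cf(\lambda)>2^{\kappa}$, there are a colour $\zeta<2^{\kappa}$ and $p_0\le p$ below which ``$\dot h(\beta)=\zeta$'' is forceable with $\beta$ an arbitrarily large \emph{successor} ordinal; otherwise, for every $\zeta$ the statement ``$\{\beta\in\mathrm{Succ}:\dot h(\beta)=\zeta\}$ is bounded in $\lambda$'' is densely forced below $p$, and amalgamating the $2^{\kappa}$ witnessing conditions by closure forces $cf(\lambda)\le2^{\kappa}$, absurd. (Restricting to successor ordinals is forced upon us: coherence together with $(c)(\beta),(c)(\delta)$ makes every member of $nacc(C_\delta)$ a successor ordinal.) Then I would build a descending $\kappa$-chain $\langle p_\nu:\nu<\kappa\rangle$ below $p_0$ with ordinals $c_\nu=\gamma_{p_\nu}$, taking at successor stages $c_{\nu+1}$ a successor ordinal with $p_{\nu+1}\Vdash\dot h(c_{\nu+1})=\zeta$ and top $C$-value $\{c_\rho:\rho\le\nu\}$, and at limit stages $c_\nu=\sup_{\mu<\nu}c_\mu$ with top $C$-value $\{c_\mu:\mu<\nu\}$. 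For $\delta=\sup_\nu c_\nu$, a lower bound $q$ with $C^q_\delta=\{c_\nu:\nu<\kappa\}$, with $\delta$ a candidate, $a^q_\delta=\{c_{\nu+1}:\nu<\kappa\}\subseteq nacc(C^q_\delta)$ and $\xi^q_\delta=\zeta$, then forces ``$\dot h\upharpoonright a_\delta\equiv\xi_\delta$'' and has $\delta\in S$. As every $h:\lambda\to2^{\kappa}$ of $V[G]$ has a name, genericity yields $(e)(\delta)$, whence $(a)$.

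\medskip
\noindent\textbf{Part (b).} The plan is to start from a square for $\lambda$ in the sense of Definition 1.7 whose underlying stationary set may be taken non-reflecting and inside $\{\delta:cf(\delta)=cf(\kappa)\}$ --- available precisely when $\lambda$ is a successor cardinal, or a regular limit cardinal that is not weakly compact (non-weak-compactness being exactly what supplies the non-reflecting stationary set alongside the square) --- and to reshape it into $(c)$ and $(d)$: reparametrise each $C_\delta$ with $\delta\in S$ to have order type exactly $\kappa$, thin the threads so that none meets $S$ (making the set in $(d)$ genuinely stationary), and fill in the remaining $C_\alpha$ trivially. Clause $(e)$ is then obtained from the guessing principle of the ambient model: in $L$, where $\lambda>\kappa^{+}$ not weakly compact already yields the square and the non-reflecting $S$, the familiar fine-structural recursion --- equivalently, carrying out the recursion defining the forcing of $(a)$ along the canonical well-order of $L$ --- produces $\langle(a_\delta,\xi_\delta):\delta\in S\rangle$ satisfying $(e)$; more generally $\Diamond_\lambda(S)$, which holds in $L$ for the non-reflecting $S$, suffices once the $C_\delta$ are built so as to absorb the $\Diamond$-guesses. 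With $(c)$--$(e)$ in place, Theorem 3.3 gives $(\lambda,\lambda)\in\mathscr{C}(D)$.
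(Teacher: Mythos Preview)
Your proposal for part (a) follows the same overall strategy as the paper: force with initial approximations to the whole package, ordered by end-extension. One caveat: the claim that $\mathbb{P}$ is ${<}\lambda$-closed is overstated. For an arbitrary descending chain $\langle p_\nu:\nu<\mu\rangle$ there is no reason the tops $\gamma_{p_\nu}$ thread coherently, so one cannot in general define $C_\tau$ at $\tau=\sup_\nu\gamma_{p_\nu}$. What is true (and what the paper asserts) is that $\mathbb{P}$ is $\lambda$-distributive, via ${<}\lambda$-strategic closure: player~II's winning strategy places the previous top into the new top club. Your explicit $\kappa$-chain already follows this strategy, so your construction goes through; and your pigeonhole lemma does not actually need closure at all --- if the negation held, then already for $p_0=p$ no $q\le p$ forces $\dot h(\beta)=\zeta$ for successor $\beta$ beyond some bound $\alpha_\zeta$, and $\sup_\zeta\alpha_\zeta<\lambda$ yields the contradiction directly, without amalgamating anything.

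The one genuine difference is in the verification of clause~$(e)(\delta)$. The paper builds a descending chain of length~$\lambda$ deciding $\dot h$ on longer and longer initial segments, and only \emph{afterwards} uses $\lambda>2^\kappa$ to extract a $\kappa$-sized monochromatic set and install it as~$a_\delta$. You invert the order: pigeonhole first to isolate a frequent colour~$\zeta$, then build only a $\kappa$-chain hitting that colour at successor ordinals. Your route is more economical (chains of length~$\kappa$ rather than~$\lambda$) and sidesteps the question of lower bounds for $\lambda$-length chains; the paper's route is more uniform and does not need your observation that $nacc(C_\delta)$ consists of successor ordinals.

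For part~(b) you and the paper agree in outline: start from a square sequence, thin so that no thread passes through a cofinality-$\kappa$ point of a fixed club (forcing such points into~$S$, hence $S$ stationary), and obtain clause~$(e)$ from $\Diamond_S$, available in~$L$ by Jensen.
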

\begin{proof}
$(a)$ We force $\bar{C}, S$ and $\langle (a_\delta, \xi_\delta): \delta\in S  \rangle$  by initial segments. So let $\MPB$
be the set of all conditions of the form
\[
p= \langle  \tau, \bar{c}, s, \langle  (a_\delta, \xi_\delta): \delta \in s             \rangle             \rangle
\]
such that
\begin{enumerate}
\item $\tau < \lambda$,

\item $\bar{c}= \langle  c_\alpha: \alpha < \tau    \rangle$, where
\begin{enumerate}
\item  $c_\alpha\subseteq\alpha$, 

\item  $lim(\alpha) \Rightarrow sup(c_\alpha)=\alpha$ and  $otp(c_\alpha) = cf(\alpha)$,

\item  $\beta\in c_\alpha \Rightarrow c_\beta=c_\alpha\cap\beta,$

\item  If $\alpha$ is a successor ordinal, then either $c_\alpha$ is empty, or has a last element.
\end{enumerate}
\item $s=\{\delta<\tau: otp(c_\delta)=\kappa, \delta\notin \bigcup_{\alpha<\tau}c_\alpha \}.$

\item
The sequence $\langle (a_\delta, \xi_\delta): \delta\in s  \rangle$ satisfies:
\begin{enumerate}
\item    $a_\delta\subseteq nacc(c_\delta)$,

\item  $otp(a_\delta)=\kappa,$

\item  $\xi_\delta<2^\kappa.$

\end{enumerate}
\end{enumerate}
Given $p \in \MPB,$ we denote it by $p= \langle  \tau^p, \bar{c}^p, s^p, \langle  (a^p_\delta, \xi^p_\delta): \delta \in s^p             \rangle             \rangle$.
For $p, q \in \MPB,$ the order relation $p \leq q$ is defined in the natural way, i.e., we require
\begin{enumerate}
\item $\tau^p \geq \tau^q$,

\item $\bar{c}^q=\bar{c}^p \upharpoonright \tau^q$,

\item $s^q = s^p \cap \tau^q$,

\item $\langle  (a^q_\delta, \xi^q_\delta): \delta \in s^q             \rangle=\langle  (a^p_\delta, \xi^p_\delta): \delta \in s^q             \rangle$.

\end{enumerate}
The forcing notion $\MPB$ is easily seen to be $\lambda$-distributive and $\lambda^+$-c.c., hence it preserves all cardinals and cofinalities.
Let $G$ be $\MPB$-generic over $V$, and define $\bar{C}=\langle C_\alpha: \alpha < \lambda \rangle, S$ and 
$\langle (a_\delta, \xi_\delta): \delta \in S  \rangle$
by

$\hspace{1.5cm}$ $C_\alpha=c^p_\alpha,$ for some (and hence all) $p \in G$ with  $\tau^p > \alpha$,

$\hspace{1.5cm}$ $S=\bigcup_{p \in G} s^p$,

$\hspace{1.5cm}$ $(a_\delta, \xi_\delta)=(a^p_\delta, \xi^p_\delta)$, for some (and hence all) $p \in G$ with  $\tau^p > \alpha$,

We show that $\bar{C}=\langle C_\alpha: \alpha < \lambda \rangle, S$ and
$\langle (a_\delta, \xi_\delta): \delta \in S  \rangle$ are as required. It suffices to show that $S$ is stationary and that if 
$h: \lambda\rightarrow 2^\kappa,$ then there is some $\delta\in S$ such that $h\upharpoonright a_\delta$ is constantly $\xi_\delta.$
\\
{\bf $S$ is a stationary subset of $\lambda$:} Assume $p \in \MPB$ and $p \Vdash$``$\lusim{D}$ is a club subset of $\lambda$''.
Let $\theta > \lambda$ be large enough regular and let $\unlhd$
be a well-ordering of $H(\theta)$. Assume $M \prec \langle  H(\theta), \in, \unlhd     \rangle$
is such that $M$ contains all relevant information, in particular $\MPB, p, \lusim{C}, \dots, \in M$, $|M| < \lambda, ^{< \kappa}M \subseteq M$
and $\delta= M \cap \lambda$ is an ordinal with $cf(\delta)=\kappa$.

By recursion, we can define a decreasing sequence $\langle  p_\alpha: \alpha < \kappa       \rangle$
of conditions in $\MPB$ such that
\begin{itemize}
\item $p_0=p,$

\item For each $\alpha < \kappa, p_\alpha \in M,$ 

\item $\langle \tau^{p_\alpha}: \alpha < \kappa  \rangle$
is a normal  sequence cofinal in $\delta$,

\item $p_{\alpha+1} \Vdash$``$\lusim{C} \cap (\tau^{p_\alpha}, \tau^{p_{\alpha+1}}) \neq \emptyset$'',

\item If we define $q$ by $\tau^q=\delta+2$ so that 
\begin{itemize}
\item $q \upharpoonright \delta= \bigwedge_{\alpha < \kappa}p_\alpha$ (the greatest lower bound of $p_\alpha$'s, $\alpha < \kappa$),
\item  $c^q_\delta= \bigcup_{\xi < \kappa} c_\xi$, where $c_\xi=c_\xi^{p_\alpha}$ for some (and hence any) $\alpha$ with $\xi < \tau^{p_\alpha}$, 
\item $c^q_{\delta+1}=\{\delta\}$,
\end{itemize}
then $q \in \MPB$.
\end{itemize}
Then $q \Vdash$``$\delta \in \lusim{C} \cap \lusim{S}$,
and we are done.
\\
{\bf Clause $(e)$-$(\delta)$ of Theorem 3.3 holds:} suppose $h: \lambda\rightarrow 2^\kappa$ and let $\lusim{h}$
be a name for it. Also let $p \in \MPB$ forces ``$\lusim{h}: \lambda\rightarrow 2^\kappa$ is a function''. 
Define a decreasing chain $\langle p_\alpha: \alpha < \lambda     \rangle$ of conditions in $\MPB$ such that
\begin{itemize}
\item $p_0=p,$
\item $p_{\alpha+1} \|$``$\lusim{h} \upharpoonright \tau^{p_\alpha}$'', say $p_{\alpha+1} \Vdash$``$\lusim{h} \upharpoonright \tau^{p_\alpha}=a_\alpha$'',
\item For limit ordinal $\alpha, p_\alpha = \bigwedge_{\beta<\alpha}p_\beta.$
\end{itemize}
As $\lambda > 2^\kappa,$ we can find $\alpha < \lambda, \xi < 2^\kappa$
and $b \subseteq a_\alpha$ of order-type $\kappa$ such that $p_{\alpha+1}\Vdash$``$\lusim{h} \upharpoonright b = id_\xi \upharpoonright b$'', where $id_\xi$ is the constant function taking value $\xi$ everywhere.

As above, we can extend $p_{\alpha+1}$ to some condition $q$ such that $\tau^q=\delta+2$ for some $\delta \in S$
such that $(a^q_\delta, \xi^q_\delta)=(b, \xi)$ and  $b \subseteq nacc(c^q_\delta)$. Then $q \Vdash$``$\delta \in \lusim{S}$ and $\lusim{h} \upharpoonright a_\delta$ is constantly $\xi_\delta$, where $a_\delta \subseteq nacc(c_\delta)$ has order type $\kappa$''.
We are done.

$(b)$ As $\lambda$ has a square, we can find a
set $T\subseteq\lambda$ and a sequence $\bar{D}=\langle D_\alpha: \alpha\in T \rangle$ such that:
\begin{itemize}
\item $\lambda \setminus T$ in a non-stationary subset of $\lambda,$

\item $ D_\alpha$ is a club of $\alpha,$

\item $\beta\in D_\alpha \Rightarrow \beta\in T$ and $D_\beta=D_\alpha\cap\beta,$

\item $otp(D_\a) <\a.$
\end{itemize}
We assume $T$ contains only limit ordinals and for $\alpha\notin T$ we set $D_\alpha=\emptyset.$  As $\lambda \setminus T$ is non-stationary, we can find a club $D \subseteq T.$
We now use the sequence $\bar{D}$ to define  a new sequence $\bar{C}=\langle C_\alpha: \alpha < \lambda \rangle$ 
such that $\bar{C}$ satisfies  clause $(c)$ of Theorem 3.3 and further
\[
\delta \in D ~~\& ~~ cf(\delta) =\kappa \Rightarrow \delta\notin \bigcup_{\alpha<\lambda}C_\alpha.
\]
It is clear that 
$S=\{\delta<\lambda: otp(C_\delta)=\kappa, \delta\notin \bigcup_{\alpha<\lambda}C_\alpha \} \supseteq D \cap Cf(\kappa)$, hence it is stationary.
The second part follows from results of Jensen. We are done.
\end{proof}

\begin{corollary}
$(V=L)$ Suppose $D$ is a uniform ultrafilter on some cardinal $\k.$ Then $\mathscr{C}(D)$ is a proper class.
\end{corollary}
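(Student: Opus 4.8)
The plan is to combine Theorem 3.3 with part $(b)$ of Lemma 3.4 and to vary $\lambda$ over a proper class of cardinals. Fix a uniform ultrafilter $D$ on $\kappa$. Since $V=L$, we have $GCH$, so $2^\kappa=\kappa^+$, and moreover the full square principle $\square_\mu$ holds for every infinite cardinal $\mu$, together with $\square_\mu$-type sequences on successors; in particular, by Jensen's results every successor cardinal $\lambda>\kappa^+$ and every singular-limit (indeed every non-weakly-compact regular limit) cardinal $\lambda$ carries a square sequence in the sense of Definition 1.6. What I would do is: for each such $\lambda$ with $\lambda>\kappa^+$, invoke Lemma 3.4$(b)$ to obtain a sequence $\bar C=\langle C_\alpha:\alpha<\lambda\rangle$, a stationary set $S\subseteq\lambda$, and a sequence $\langle (a_\delta,\xi_\delta):\delta\in S\rangle$ satisfying clauses $(c)$, $(d)$, $(e)$ of Theorem 3.3 (the key nontrivial point being clause $(e)(\delta)$, the ``guessing'' of every $h:\lambda\to 2^\kappa$ on some $a_\delta$, which in $L$ follows from the fine structure / $\square$-plus-$\Diamond$ machinery). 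Since $2^\kappa<\lambda=\cf(\lambda)$ holds whenever $\lambda$ is a regular cardinal above $\kappa^+$ (and in $L$ successor cardinals are regular), hypothesis $(b)$ of Theorem 3.3 is met.

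Applying Theorem 3.3 then yields $(\lambda,\lambda)\in\mathscr{C}(D)$ for every regular $\lambda>\kappa^+$ for which Lemma 3.4$(b)$ applies. The only remaining task is to observe that there is a proper class of such $\lambda$: for instance, every successor cardinal $\lambda=\mu^+$ with $\mu\geq\kappa^+$ is regular, is not weakly compact (successor cardinals are never weakly compact), and carries a square sequence in $L$; there is a proper class of these. Hence $\{(\lambda,\lambda):\lambda=\mu^+\text{ for some cardinal }\mu\geq\kappa^+\}\subseteq\mathscr{C}(D)$, and this is a proper class. Therefore $\mathscr{C}(D)$ is a proper class.

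The main obstacle — already absorbed into Lemma 3.4$(b)$ — is producing, from a square sequence on $\lambda$, the enriched data $\langle(a_\delta,\xi_\delta):\delta\in S\rangle$ that guesses every function $h:\lambda\to 2^\kappa$ on a set $a_\delta\subseteq\mathrm{nacc}(C_\delta)$ of order type $\kappa$; this is exactly where the structure of $L$ (a global $\Diamond$-like principle compatible with the square) is used, and it is the content of ``the second part follows from results of Jensen'' in the proof of Lemma 3.4. Once that lemma is granted, the deduction of the corollary is purely a matter of noting that the hypotheses of Theorem 3.3 are satisfied cofinally often in the cardinals, so nothing further needs to be computed.
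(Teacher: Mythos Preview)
Your proposal is correct and follows essentially the same approach as the paper: for each successor cardinal $\lambda>2^\kappa=\kappa^+$, use Lemma~3.4$(b)$ to obtain the square-type sequence $\bar C$ and stationary $S$, then use the $\Diamond$ machinery available in $L$ to produce the guessing sequence $\langle(a_\delta,\xi_\delta):\delta\in S\rangle$ for clause~$(e)$, and apply Theorem~3.3 to conclude $(\lambda,\lambda)\in\mathscr{C}(D)$. The only cosmetic difference is that the paper's proof of the corollary separates out the invocation of $\Diamond_S$ for clause~$(e)$ as an explicit step, whereas you fold it into the appeal to Lemma~3.4$(b)$ (which indeed asserts that in $L$ all the hypotheses hold).
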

\begin{proof}
Assume $V=L.$ Let $\lambda > 2^\kappa$ be a successor cardinal. By Lemma 3.4$(b)$, we can find a sequence $\bar{C} = \langle C_\alpha: \alpha < \lambda \rangle$
which satisfies clause $(c)$ of Theorem 3.3 and such that the set $S=\{\delta<\lambda: otp(C_\delta)=\kappa, \delta\notin \bigcup_{\alpha<\lambda}C_\alpha \}$
is stationary in $\lambda$. Also we can apply $\Diamond_S$ to find a sequence $\langle (a_\delta, \xi_\delta): \delta \in S       \rangle$
satisfying clause $(e)$ of Theorem 3.3. It follows from Theorem 3.3 that $(\lambda, \lambda) \in \mathscr{C}(D).$ Thus
\[
\mathscr{C}(D) \supseteq \{ (\lambda, \lambda): \lambda > 2^\kappa \text{~is a successor cardinal~}              \},
\]
and hence $\mathscr{C}(D)$ is a proper class.
\end{proof}

\section{More on $\mathscr{C}(D)$ being small: consistency results}
\begin{lemma}
Suppose that:

$\hspace{1.cm}$$(a)$ $\sigma, \k, \mu, \l$ and $\tau$ are such that:

$\hspace{1.5cm}$$(\alpha)$ $\sigma< \k=cf(\k)\leq \mu < \l=cf(\l)$ are infinite cardinals,

$\hspace{1.5cm}$$(\beta)$  $\tau:\l\times\l \rightarrow \sigma,$

$\hspace{1.5cm}$$(\gamma)$ $\a<\l \Rightarrow |\a|^{<\k}<\l,$

$\hspace{1.cm}$$(b)$ We have $\mathcal{U},\tau_1, \bar{\a}$  such that:

$\hspace{1.5cm}$$(\alpha)$ $\mathcal{U}\subseteq S^\l_{\geq \k}$ is stationary, where $S^\l_{\geq \k}=\{\a<\l: cf(\a)\geq \k   \},$

$\hspace{1.5cm}$$(\beta)$ $\tau_1: \mathcal{U} \rightarrow \sigma,$

$\hspace{1.5cm}$$(\gamma)$ $\bar{\a}= \langle \beta_{v,\xi}: v\in [\mathcal{U}]^{<\k}, \xi<\mu  \rangle$ is a sequence of ordinals $<\l,$

$\hspace{1.5cm}$$(\delta)$ $\beta_{v,\xi} > sup(v),$

$\hspace{1.5cm}$$(\epsilon)$ If $v\in [\mathcal{U}]^{<\k},$ then for some $\xi<\mu,$ we have $\sup(v)<\xi$ and
\begin{center}
$\a\in v \Rightarrow \tau(\a, \beta_{v,\xi})=\tau_1(\a).$
\end{center}
 Then:

 $\hspace{1.7cm}$ There are a club $E$ of $\l$ and $\tau_2: \mathcal{U}\cap E \rightarrow \sigma$ such that if  $v\in [\mathcal{U}]^{<\k}$

 $(c)$ $\hspace{1.1cm}$ and $sup(v) <\delta\in \mathcal{U}\cap E,$ then for some $\beta\in (sup(v), \delta)$ we have

  $\hspace{1.7cm}$ $\a\in v \Rightarrow \tau(\a,\b)=\tau_1(\a)$ and $\tau(\b, \delta)=\tau_2(\delta).$

\end{lemma}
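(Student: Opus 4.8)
The plan is to run a two-step elementary-submodel / pressing-down argument, using the colouring $\tau$ and the given "one-colour" data $(\mathcal{U},\tau_1,\bar\alpha)$ to produce the club $E$ and the function $\tau_2$. First I would fix, for a sufficiently large regular $\chi$, a continuous increasing chain $\langle M_i : i < \lambda\rangle$ of elementary submodels of $(H(\chi),\in,\unlhd)$, each of size $<\lambda$, each closed under $<\kappa$-sequences (possible since $|\alpha|^{<\kappa}<\lambda$ for $\alpha<\lambda$ by $(a)(\gamma)$, so such a chain can be arranged with $M_i\cap\lambda$ an ordinal), and containing all the relevant parameters $\tau,\tau_1,\bar\alpha,\mathcal{U},\mu,\kappa$. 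Let $E=\{\delta<\lambda : M_\delta\cap\lambda=\delta\}$; this is a club of $\lambda$. The point of the $<\kappa$-closure is that for $\delta\in\mathcal{U}\cap E$ with $\operatorname{cf}(\delta)\geq\kappa$, every $v\in[\mathcal{U}]^{<\kappa}$ with $\sup(v)<\delta$ already lies in $M_\delta$ (its range is a bounded subset of $\delta$ of size $<\kappa\le\operatorname{cf}(\delta)$, hence contained in some $M_i\cap\lambda$ with $i<\delta$, and $M_i$ is $<\kappa$-closed so $v\in M_i\subseteq M_\delta$).

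Next I would define $\tau_2$. Fix $\delta\in\mathcal{U}\cap E$. Inside $M_\delta$ we see the parameter $\bar\alpha$ and the function $\tau$; for each $v\in[\mathcal{U}]^{<\kappa}\cap M_\delta$ hypothesis $(b)(\epsilon)$ gives a $\xi=\xi(v)<\mu$ with $\sup(v)<\xi$ and $\tau(\alpha,\beta_{v,\xi})=\tau_1(\alpha)$ for all $\alpha\in v$. Since $\mu<\lambda$ and $\bar\alpha\in M_\delta$, all the ordinals $\beta_{v,\xi}$ for $v\in M_\delta$, $\xi<\mu$ belong to $M_\delta$, hence are $<\delta$. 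The natural candidate for $\tau_2(\delta)$ is extracted from the colour $\tau(\beta,\delta)$ where $\beta$ ranges over these $\beta_{v,\xi}$'s; but $\tau(\beta,\delta)$ depends on $\beta$, so we need a pigeonhole. Here is where I would use $\operatorname{cf}(\delta)\geq\kappa$ together with $\sigma<\kappa$: the map $\beta\mapsto\tau(\beta,\delta)$ sends a cofinal subset of $\delta$ into $\sigma<\kappa\le\operatorname{cf}(\delta)$, so one colour $j_\delta<\sigma$ is attained on a cofinal (in particular unbounded-below-$\delta$) set of such $\beta$'s. Set $\tau_2(\delta)=j_\delta$.

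Finally I would verify clause $(c)$. Let $v\in[\mathcal{U}]^{<\kappa}$ and $\delta\in\mathcal{U}\cap E$ with $\sup(v)<\delta$. By the $<\kappa$-closure remark, $v\in M_\delta$, so $\xi(v)$ and $\beta^0:=\beta_{v,\xi(v)}$ are defined and lie in $M_\delta$, hence $\beta^0<\delta$, and $\sup(v)<\beta^0$ (by $(b)(\delta)$, or by $\sup(v)<\xi(v)$ and the convention). Moreover $\tau(\alpha,\beta^0)=\tau_1(\alpha)$ for all $\alpha\in v$. It remains to arrange $\tau(\beta,\delta)=\tau_2(\delta)=j_\delta$ while keeping $\beta>\sup(v)$. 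Since $j_\delta$ is attained cofinally below $\delta$ by colours of the form $\tau(\beta_{w,\eta},\delta)$ with $w\in M_\delta$ — more carefully, I would run the pigeonhole in the previous paragraph not over all $\beta<\delta$ but over the specific family $\{\beta_{v,\eta}:\eta<\mu,\ \tau(\alpha,\beta_{v,\eta})=\tau_1(\alpha)\ \forall\alpha\in v\}$ for our fixed $v$, which is still a subset of $M_\delta$ of size $\le\mu$; one must check this family is unbounded in $\delta$, which follows because $M_\delta\cap\lambda=\delta$ and $M_\delta$ thinks this family is cofinal in $\sup$-of-itself while $\delta=\sup(M_\delta\cap\lambda)$... — I can choose $\beta$ in that family with $\beta>\sup(v)$ (automatic) and $\tau(\beta,\delta)=j_\delta$. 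Then $\beta\in(\sup(v),\delta)$, $\tau(\alpha,\beta)=\tau_1(\alpha)$ for $\alpha\in v$, and $\tau(\beta,\delta)=\tau_2(\delta)$, as required.

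\textbf{Main obstacle.} The delicate point is the simultaneous control: we need a single colour $\tau_2(\delta)$ that works for \emph{all} small sets $v$, yet the witness $\beta$ and the colour $\tau(\beta,\delta)$ a priori depend on $v$. The fix is to note that for each $v$ the relevant witnesses $\beta$ form a $\mu$-sized subfamily of $M_\delta$ that is unbounded in $\delta$ (using $M_\delta\cap\lambda=\delta$ and elementarity), and then to pick $\tau_2(\delta)$ by a $\operatorname{cf}(\delta)$-sized pigeonhole \emph{after} reflecting into $M_\delta$ — so that the choice is uniform in $v$ because it only uses the parameter $\bar\alpha$, not $v$ itself. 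Making this uniformity precise (e.g. by defining $\tau_2(\delta)$ as the $\unlhd$-least colour $j$ such that, for every $v\in[\mathcal{U}]^{<\kappa}$ with $\sup(v)<\delta$, the set of admissible $\beta$ with $\tau(\beta,\delta)=j$ is unbounded in $\delta$, and checking such $j$ exists via $\sigma<\kappa\le\operatorname{cf}(\delta)$ and a diagonal intersection over the $<\delta$-many relevant $v$'s lying in $M_\delta$) is the part that needs care; everything else is the standard club-of-elementary-submodels bookkeeping.
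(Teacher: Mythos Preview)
Your elementary-submodel club $E$ is fine and essentially matches the paper's closure club, but the definition of $\tau_2(\delta)$ has a genuine gap at precisely the ``main obstacle'' you identify. For a fixed $v$, the family of admissible witnesses $\{\beta_{v,\xi}:\forall\alpha\in v\ \tau(\alpha,\beta_{v,\xi})=\tau_1(\alpha)\}$ is \emph{not} unbounded in $\delta$: it is definable from parameters in $M_\delta$ and has size $\le\mu<\lambda$, so its supremum lies in $M_\delta$ and is therefore strictly below $\delta$. Worse, hypothesis $(b)(\epsilon)$ only promises this family is nonempty --- it could be a singleton --- so the cofinality pigeonhole $\sigma<\kappa\le\operatorname{cf}(\delta)$ has nothing to act on. The ``diagonal intersection over the $<\delta$-many $v$'s'' cannot rescue this either: there are $|\delta|^{<\kappa}$ such $v$, far more than $\sigma$, so one colour per $v$ does not collapse to a uniform colour.

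The missing idea is to allow the witness $\beta$ for $v$ to come from the sequence $\beta_{u,\cdot}$ of a \emph{superset} $u\supseteq v$ in $[\mathcal{U}\cap\delta]^{<\kappa}$, rather than from $v$'s own sequence. Call $\xi<\sigma$ good at $\delta$ if for every $v\in[\mathcal{U}\cap\delta]^{<\kappa}$ there exist $u\supseteq v$ in $[\mathcal{U}\cap\delta]^{<\kappa}$ and an index $i<\mu$ with $\tau(\alpha,\beta_{u,i})=\tau_1(\alpha)$ for all $\alpha\in u$ and $\tau(\beta_{u,i},\delta)=\xi$. If no colour were good, choose for each $\xi<\sigma$ a bad $v_\xi$ and set $v=\bigcup_{\xi<\sigma}v_\xi$; since $\sigma<\kappa$ this is still in $[\mathcal{U}\cap\delta]^{<\kappa}$, and $(b)(\epsilon)$ applied to $v$ yields an index $i$ whose colour $\xi=\tau(\beta_{v,i},\delta)$ is then witnessed for $v_\xi$ by the superset $u=v\supseteq v_\xi$ --- a contradiction. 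Take $\tau_2(\delta)$ to be the least good colour; clause $(c)$ follows immediately, since for given $v$ one passes to the promised $u\supseteq v$ and sets $\beta=\beta_{u,i}\in(\sup(u),\delta)\subseteq(\sup(v),\delta)$. This superset trick (turning $\sigma$-many potential counterexamples into a single one via their union) is the combinatorial core you are missing.
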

\begin{proof}
For every $v\in [\mathcal{U}]^{<\k}$ set
\begin{center}
$J_v=\{w\subseteq \mu:$ there is no $i\in w$ such that $(\forall \a\in v) \tau(\a, \b_{v,i})=\tau_1(\a) \}.$
\end{center}
$J_v$ is clearly a $\k-$complete ideal on $\mu$ and $\mu\notin J_v$ by $(b)(\epsilon)$ (and in fact $J_v$ has a maximal element). Let

$\hspace{1.cm}$ $E=\{\delta<\l: \delta$ is a limit ordinal, and for every bounded  subset $v$  of

$\hspace{3.5cm}$ $\delta$ of cardinality less than $\k,$ we have $\bigcup \{\beta_{v,i}: i<\mu \} \subseteq \delta  \}.$

Clearly $E$ is a club of $\l.$
\begin{claim}
For every $\delta\in S^\l_{\geq\k}\cap E,$ there is an ordinal $\xi<\sigma$  such that if $v_1\in [\mathcal{U}\cap \delta]^{<\k},$ then for some $v$ we have:

$\hspace{1.cm}$$(a)$ $v_1\subseteq v\in [\mathcal{U}\cap \delta]^{<\k},$

$\hspace{1.cm}$$(b)$ $\{i<\mu: \tau(\b_{v,i},\delta)=\xi \}\in J_v^+.$
\end{claim}
\begin{proof}
Suppose not. Then for each $\xi<\sigma,$ we can find some $v_{\xi}\in [\mathcal{U}\cap \delta]^{<\k}$ such that if $v_\xi \subseteq v\in [\mathcal{U}\cap \delta]^{<\k},$ then $w_{\xi,v}=\{i<\mu: \tau(\beta_{v,i}, \delta)=\xi \}\in J_v.$ Let $v=\bigcup_{\xi<\sigma}v_\xi.$ Then $v\in [\mathcal{U}\cap \delta]^{<\k}$ and for all $\xi<\sigma, w_{\xi,v}\in J_v,$ so by $\k-$completeness of $J_v$, $w=\bigcup_{\xi<\sigma}w_{\xi,v}\in J_v.$ Clearly $w=\mu,$ so $\mu\in J_v,$ which contradicts $(b)(\epsilon).$
\end{proof}

For $\delta \in \mathcal{U}\cap E$ let  $\tau_2(\delta)$ be the least  $\xi$ as in Claim 4.2. We show that $E$ and $\tau_2$ are as required. So let $v\in [\mathcal{U}]^{<\k}$ and suppose that $sup(v)<\delta\in E\cap \mathcal{U}.$ By Claim 4.2, there is $u$ such that $v\subseteq u\in [\mathcal{U}\cap \delta]^{<\k}$ and $w=\{i<\mu: \tau(\b_{u,i},\delta)=\tau_2(\delta) \}\in J_u^+.$ Since $w\notin J_u,$ there exists $i\in w$ such that for all $\a\in u, \tau(\a, \beta_{u,i})=\tau_1(\a).$ So it suffices to take $\b=\b_{u,i}$.
\end{proof}
Before we continue, let us recall from Theorem 2.22  the principle $(*)^{2,2}_{\l,\sigma}$, which says  if $c:[\lambda]^2 \rightarrow \sigma,$ then there are $u\subseteq \sigma, |u|< 3$ and $S\in [\lambda]^\lambda$ such that
 if $\alpha<\beta$ are in $S$, then for some $n< 3$ and $\gamma_0< \dots <\gamma_n$ we
 have $\gamma_0=\alpha, \gamma_n=\beta$ and for $l<n, c\{\gamma_l,\gamma_{l+1}\}\in u.$

\begin{lemma}
Suppose that $\sigma, \k, \mu$ and $\lambda$ are infinite cardinals such that $\sigma<\k=cf(\k)\leq \mu <\l=cf(\l)$ and suppose that for each $\tau:\l\times\l \rightarrow \sigma$ there are $\mathcal{U}, \tau_1$ and $\bar{\alpha}$ such that $(a)$ and $(b)$ of Lemma 4.1 hold. Then $(*)^{2,2}_{\l,\sigma}$ holds.
\end{lemma}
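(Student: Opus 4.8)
The plan is to reduce $(*)^{2,2}_{\lambda,\sigma}$ to Lemma 4.1 by converting the given colouring of unordered pairs into a colouring of ordered pairs, and then to homogenise the two ``end colours'' produced by that lemma. So let $c\colon[\lambda]^2\to\sigma$ be given. First I would define $\tau\colon\lambda\times\lambda\to\sigma$ by $\tau(\alpha,\beta)=c\{\alpha,\beta\}$ whenever $\alpha\neq\beta$ (the value of $\tau$ on the diagonal is irrelevant; take it to be $0$). By the hypothesis of the lemma there are $\mathcal{U},\tau_1,\bar\alpha$ for which clauses $(a)$ and $(b)$ of Lemma 4.1 hold with this $\tau$, so Lemma 4.1 provides a club $E$ of $\lambda$ and a function $\tau_2\colon\mathcal{U}\cap E\to\sigma$ satisfying clause $(c)$: for every $v\in[\mathcal{U}]^{<\kappa}$ with $\sup(v)<\delta\in\mathcal{U}\cap E$ there is $\beta\in(\sup(v),\delta)$ with $\tau(\alpha,\beta)=\tau_1(\alpha)$ for all $\alpha\in v$ and $\tau(\beta,\delta)=\tau_2(\delta)$.

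Next I would homogenise. Since $\mathcal{U}$ is stationary and $E$ is club, $\mathcal{U}\cap E$ is a stationary subset of the regular uncountable cardinal $\lambda$, hence $|\mathcal{U}\cap E|=\lambda$. The assignment $\delta\mapsto(\tau_1(\delta),\tau_2(\delta))$ partitions $\mathcal{U}\cap E$ into at most $\sigma<\lambda$ classes, so by regularity of $\lambda$ there are $i,j<\sigma$ and a set $S\subseteq\mathcal{U}\cap E$ with $|S|=\lambda$ such that $\tau_1(\delta)=i$ and $\tau_2(\delta)=j$ for every $\delta\in S$. Put $u=\{i,j\}$; then $u\subseteq\sigma$, $|u|<3$, and $S\in[\lambda]^\lambda$.

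Finally I would check that $u$ and $S$ witness $(*)^{2,2}_{\lambda,\sigma}$. Fix $\alpha_1<\alpha_2$ in $S$. As $\kappa>\aleph_0$, the singleton $v=\{\alpha_1\}$ lies in $[\mathcal{U}]^{<\kappa}$, and $\sup(v)=\alpha_1<\alpha_2\in\mathcal{U}\cap E$, so clause $(c)$ yields $\beta\in(\alpha_1,\alpha_2)$ with $\tau(\alpha_1,\beta)=\tau_1(\alpha_1)=i$ and $\tau(\beta,\alpha_2)=\tau_2(\alpha_2)=j$. Since $\alpha_1<\beta<\alpha_2$ the three ordinals are distinct, so $c\{\alpha_1,\beta\}=i$ and $c\{\beta,\alpha_2\}=j$. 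Thus with $n=2<3$ and $\gamma_0=\alpha_1,\ \gamma_1=\beta,\ \gamma_2=\alpha_2$ we have $\gamma_0=\alpha_1$, $\gamma_2=\alpha_2$ and $c\{\gamma_l,\gamma_{l+1}\}\in u$ for $l<2$, which is exactly what $(*)^{2,2}_{\lambda,\sigma}$ demands.

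I do not expect a genuine obstacle here: the substance is all inside Lemma 4.1, and the remaining steps are bookkeeping — the reduction to ordered pairs, the application of Lemma 4.1, and a pigeonhole argument. The two points needing a little care are that $\mathcal{U}\cap E$ must be noted to be stationary (equivalently, of size $\lambda$) before the pigeonhole so that $S$ comes out of size $\lambda$, and that $1<\kappa$, so that singletons are legitimate members of $[\mathcal{U}]^{<\kappa}$ when clause $(c)$ is invoked.
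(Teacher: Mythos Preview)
Your proposal is correct and follows essentially the same route as the paper: convert $c$ to a colouring $\tau$ of ordered pairs, invoke the hypothesis and Lemma 4.1 to obtain $E$ and $\tau_2$, thin $\mathcal{U}\cap E$ by pigeonhole to make $\tau_1,\tau_2$ constant, and then apply clause $(c)$ with $v=\{\alpha_1\}$. The only cosmetic difference is that the paper performs the pigeonhole in two successive steps (first on $\tau_1$, then on $\tau_2$), whereas you do it once on the pair $(\tau_1(\delta),\tau_2(\delta))$; both yield the same $u=\{\xi_1,\xi_2\}$ and $S$.
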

\begin{proof}
Fix $\tau,$ and let  $\mathcal{U}, \tau_1$ and $\bar{\alpha}$ witness  $(a)$ and $(b)$ of Lemma 4.1 hold.
Let $E$ and $\tau_2$ be as in the conclusion of Lemma 4.1. Since $ran(\tau_1) \subseteq \sigma <\l$, for some $\xi_1<\sigma,$ the set
\begin{center}
$S_1=\{\a\in \mathcal{U}\cap E: \tau_1(\a)=\xi_1 \}$
\end{center}
is a stationary subset $\l.$ So again as $ran(\tau_2) \subseteq \sigma <\l$, for some $\xi_2<\sigma,$ the set
\begin{center}
$S_2=\{\a\in S_1: \tau_2(\a)=\xi_2 \}$
\end{center}
is a stationary subset of $S_1.$ Thus
 $S_2$ is of size $\l.$ We show that $u=\{\xi_1, \xi_2 \}$ and $S_2$ witness $(*)^{2,2}_{\l,\sigma}$. So suppose that $\a<\b$ are in $S_2$. By $(c)$ of Lemma 4.1 (taking $v=\{\a\}$ and $\delta=\b$) we can find some $\gamma \in (\a,\b)$ such that $\tau(\a,\gamma)=\tau_1(\a)=\xi_1$ and $\tau(\gamma,\b)=\tau_2(\b)=\xi_2.$ Thus $\tau(\a,\gamma), \tau(\gamma,\b)\in u,$  as required.
\end{proof}

\begin{theorem}
Suppose that:

$\hspace{1.cm}$$(\a)$ $\rho < \k \leq \chi=cf(\chi)$ and $\chi=\chi^{<\k},$

$\hspace{1.cm}$$(\b)$ $\mu >\chi$ is a supercompact cardinal,

$\hspace{1.cm}$$(\gamma)$ $\mathbb{Q}=Add(\chi, \mu)$ is  the Cohen forcing  for adding $\mu-$many new Cohen

$\hspace{1.3cm}$ subsets of $\chi$,

$\hspace{1.cm}$$(\delta)$ $\l=cf(\l) >\mu$ and $\a<\l \Rightarrow |\a|^{<\chi} <\l.$

Then the following hold in $V^{\mathbb{Q}}$:

$\hspace{1.cm}$$(a)$ If $\tau: \l\times\l \rightarrow \rho,$ then for some $\mathcal{U},\tau_1$ and $\bar{\a}$ clauses $(a)$ and $(b)$ of Lemma 4.1

$\hspace{1.3cm}$ hold,

$\hspace{1.cm}$$(b)$ $(*)^{2,2}_{\l,\rho}.$
\end{theorem}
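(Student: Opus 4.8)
The plan is to use the supercompactness of $\mu$ in the ground model to obtain, in $V^{\mathbb Q}$, the combinatorial data $\mathcal U,\tau_1,\bar\a$ needed in Lemma 4.1, and then invoke Lemma 4.3 to derive $(*)^{2,2}_{\l,\rho}$. Since $\mathbb Q=\Add(\chi,\mu)$ is $\chi$-closed and has the $\chi^+$-c.c. (using $\chi=\chi^{<\k}\geq\chi^{<\chi}$ after adjusting, or more precisely $(2^{<\chi})^+$-c.c.), it preserves cardinals and cofinalities, preserves $\chi=\cf(\chi)=\chi^{<\k}$, and by $(\delta)$ preserves ``$\a<\l\Rightarrow|\a|^{<\chi}<\l$''; moreover $\rho<\k\le\chi$ so a coloring $\tau$ in $V^{\mathbb Q}$ takes values in a set of size $\rho<\k$. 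Thus the cardinal arithmetic hypotheses $(a)(\alpha),(a)(\gamma)$ of Lemma 4.1 hold in $V^{\mathbb Q}$. It then suffices to verify $(a)$ of Theorem 4.4, i.e.\ to produce $\mathcal U\subseteq S^\l_{\ge\k}$ stationary, $\tau_1\func\mathcal U\to\rho$, and $\bar\a=\langle\beta_{v,\xi}:v\in[\mathcal U]^{<\k},\xi<\mu\rangle$ with $\beta_{v,\xi}>\sup(v)$ such that for every $v\in[\mathcal U]^{<\k}$ there is $\xi<\mu$ with $\sup(v)<\xi$ and $\a\in v\Rightarrow\tau(\a,\beta_{v,\xi})=\tau_1(\a)$; once this is done, Lemma 4.3 gives $(*)^{2,2}_{\l,\rho}$, which is $(b)$.

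The heart of the argument is a reflection/Laver-function-style use of supercompactness in $V$. Fix a $\mathbb Q$-name $\dot\tau$ for a coloring $\l\times\l\to\rho$. Choose $\eta\geq\l$ large, let $U$ be a normal fine measure on $P_\mu(\eta)$ and $j\func V\to M\simeq\Ult(V,U)$ with $\crit(j)=\mu$, $M^\eta\subseteq M$, and $j(\mathbb Q)=\Add(j(\chi),j(\mu))$. Since $\chi<\mu$, we have $j(\chi)=\chi$, so $\mathbb Q$ is an initial-coordinate piece of $j(\mathbb Q)$ and $j(\mathbb Q)\cong\mathbb Q\times\Add(\chi,j(\mu)\setminus\mu)$ (after re-indexing); in particular, working over $V^{\mathbb Q}$, the quotient forcing $\mathbb Q_{\mathrm{tail}}=\Add(\chi,j(\mu)\setminus\mu)$ is again $\chi$-closed. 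Force with $\mathbb Q_{\mathrm{tail}}$ over $V^{\mathbb Q}$ to get $G^+$; in $V[G][G^+]$ the embedding $j$ lifts to $j^+\func V[G]\to M[G][G^+]$ with $j^+(G)=G*G_{\mathrm{tail}}$ (standard lifting, using that $j"G\subseteq G*G_{\mathrm{tail}}$ generates a filter since $\mathbb Q$ is small relative to $\crit(j)$). Now evaluate: $\tau=\dot\tau^G$ is a coloring in $V[G]$ and $j^+(\tau)$ is a coloring of $j^+(\l)\times j^+(\l)$ in $M[G][G^+]$. For $\a<\l$ we have $j^+(\a)<j^+[\l]$'s sup; put $\beta^*=\sup j^+[\l]<j^+(\l)$. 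Define, back in $V[G]$ (not yet $V[G][G^+]$ — this is where care is needed), $\mathcal U\subseteq S^\l_{\ge\k}$ by reflecting the statement ``$j^+(\tau)(j^+(\a),\beta^*)$ is well-defined''; concretely one lets $\tau_1(\a)$ be a value that is ``$U$-often'' equal to a fixed color, and picks $\mathcal U$ to be the stationary set of $\delta<\l$ of cofinality $\ge\k$ at which a pressing-down / normality argument stabilizes $\tau_1$.

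More carefully, the key step producing $\bar\a$: given $v\in[\mathcal U]^{<\k}$ in $V[G]$, since $|v|<\k\le\chi=\crit$-adjacent and $\mathbb Q_{\mathrm{tail}}$ is $\chi$-closed hence adds no new $<\chi$-sequences, $v\in M[G][G^+]$ and $j^+[v]=j^+(v)$ has size $<\k<j^+(\mu)$, so $\beta^*$ (or any ordinal between $\sup j^+[v]$ and $j^+(\mu)$) witnesses in $M[G][G^+]$ the statement ``$\exists\xi<j^+(\mu)$, $\sup(j^+(v))<\xi$, $\forall\a\in j^+(v)$, $j^+(\tau)(\a,\xi)=j^+(\tau_1)(\a)$''; by elementarity of $j^+$, this statement reflects down to $V[G]$ with $j^+(v)$ replaced by $v$, yielding the required $\xi<\mu$ and we set $\beta_{v,\xi}$ accordingly (choosing a uniform witness, e.g.\ the $\unlhd$-least, to get a genuine sequence $\bar\a$ in $V[G]$). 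Stationarity of $\mathcal U$ follows because $\beta^*$ has cofinality $\ge j^+(\k)>\k$ in $M[G][G^+]$ and any club $E\in V[G]$ of $\l$ has $j^+(E)$ containing a tail of $j^+[\l]$, so $\beta^*\in j^+(E)$, reflecting to ``$\mathcal U\cap E\ne\emptyset$'' densely.

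The main obstacle I expect is the \emph{lifting of $j$ through $\mathbb Q$ and the verification that $\mathcal U,\tau_1,\bar\a$ genuinely live in $V^{\mathbb Q}$} rather than in the larger extension $V[G][G^+]$: one must argue that although the witnessing ordinals are read off from the lifted embedding, the resulting objects are definable in $V[G]$ because (i) $\mathbb Q_{\mathrm{tail}}$ is $\chi$-closed and $|v|<\chi$, so sets of size $<\chi$ and colorings restricted to small domains are unchanged, and (ii) $\tau$, $\mathcal U$, and each $\beta_{v,\xi}$ admit a definition referring only to $\tau$ and ordinal parameters $<j^+(\mu)$ that, by elementarity and the absence of new small sets, can be pulled back. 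A secondary technical point is ensuring the normality/pressing-down argument that stabilizes $\tau_1$ on a stationary $\mathcal U\subseteq S^\l_{\ge\k}$ is carried out in $V^{\mathbb Q}$ using that $\l$ remains regular and $S^\l_{\ge\k}$ stationary there; this is routine given cardinal preservation. Once $(a)$ is established, $(b)$ is immediate from Lemma 4.3.
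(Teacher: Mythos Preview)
Your reduction of $(b)$ to $(a)$ via Lemma 4.3 is correct, and the overall instinct to exploit the supercompactness of $\mu$ is right. However, your proposed implementation via lifting $j$ to $j^+\colon V[G]\to M[G][G^+]$ has a genuine gap at exactly the point you flag as ``the main obstacle'': you do not succeed in placing $\tau_1$ in $V[G]$. Your suggested fix --- that $\chi$-closure of $\mathbb Q_{\mathrm{tail}}$ keeps ``sets of size $<\chi$'' unchanged --- does not help, because $\tau_1$ has domain $\mathcal U\subseteq\lambda$ of size $\lambda\gg\chi$. The value $j^+(\tau)(j(\alpha),\beta^*)$ is computed in $M[G][G^+]$ from the $j(\mathbb Q)$-name $j(\dot\tau)$ and genuinely depends on $G^+$; nothing you have said prevents this dependence. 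Likewise, the phrase ``$\tau_1(\alpha)$ is a value that is $U$-often a fixed color'' is not well-posed: $U$ is an ultrafilter on $(P_\mu(\lambda))^V$, while the sets $\{u:\tau(\alpha,\sup u)=c\}$ live in $V[G]$ and need not be measured by $U$. (Your approach \emph{can} be salvaged by invoking weak homogeneity of $\mathbb Q_{\mathrm{tail}}$ to show each value $j^+(\tau)(j(\alpha),\beta^*)$ is decided by the trivial condition over $V[G]$, hence $\tau_1\in V[G]$; but you do not say this, and once you do, the reflection step for $\bar\alpha$ still needs to be written out carefully.)

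The paper does something quite different and never lifts the embedding. Working in $V$, it uses the $(2^{<\chi})^+$-c.c.\ of $\mathbb Q$ to fix, for each pair $\alpha<\beta<\lambda$, a maximal antichain $\langle q_{\alpha,\beta,\xi}:\xi<\sigma\rangle$ (with $\sigma=2^{<\chi}<\mu$) together with decided values $c_{\alpha,\beta,\xi}<\rho$ for $\dot\tau(\alpha,\beta)$. It then uses the normal fine measure $D$ on $P_\mu(\lambda)$ in $V$: writing $\beta(u)=\sup(u)$, the $\mu$-completeness of $D$ stabilizes $c_{\alpha,\beta(u),\xi}$ to a value $c_{\alpha,\xi}$ on a measure-one set, and a Magidor-normality argument stabilizes $q_{\alpha,\beta(u),\xi}\restriction u$ to a fixed $q_{\alpha,\xi}$. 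The stationary set $\mathcal U$ and the ordinals $\beta_{v,\xi}=\sup(u_{v,\xi})$ are then built in $V$ by an explicit recursion on $\xi<\sigma$ choosing $u_{v,\xi}$ in a diagonal-intersection measure-one set so that supports become disjoint. Finally $\tau_1$ is defined in $V[G]$ by $\tau_1(\alpha)=c_{\alpha,\xi}$ for the least $\xi$ with $q_{\alpha,\xi}\in G$, and clause $(b)(\epsilon)$ is verified by a density argument: given $v$ and a condition $q$, one amalgamates $q$ with finitely many $q_{\alpha,\beta_{v,i},\xi(\alpha)}$ (which is possible precisely because of the arranged disjointness of supports). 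The chain condition, not closure, is what makes the ground-model combinatorics control the generic coloring.
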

\begin{proof}
Note that  $(b)$ follows from $(a)$ and Lemma 4.3, so it suffices to prove $(a)$. Let $\lusim{\tau}$ be a $\mathbb{Q}-$name for $\tau,$ and suppose for simplicity that $1_{\mathbb{Q}}\Vdash \lusim{\tau}: \l\times\l \rightarrow \rho.$ Let $D$ be a normal measure on $I=P_\mu(\l),$ and let $j: V \rightarrow M \simeq Ult(V, D)$ be the corresponding ultrapower embedding so that $\mu=crit(j), j(\mu)>\l$ and $M^{\l} \subseteq M.$ The following claim is trivial using the fact that $\mathbb{Q}$ satisfies the $\sigma^+-c.c.,$ where $\sigma=2^{<\chi}.$
\begin{claim}
For any $\a<\b<\l,$ there are $(\bar{q}_{\a,\b}, \bar{c}_{\a,\b})$ such that:

$\hspace{1.cm}$$(a)$ $\bar{q}_{\a,\b}= \langle q_{\a,\b,\xi}: \xi<\sigma \rangle$ is a maximal antichain of $\mathbb{Q}$,

$\hspace{1.cm}$$(b)$ $\bar{c}_{\a,\b}= \langle c_{\a,\b,\xi}: \xi<\sigma  \rangle,$ where $c_{\a,\b,\xi}<\k$

$\hspace{1.cm}$$(c)$ $q_{\a,\b,\xi} \Vdash \lusim{\tau}(\a,\b)=c_{\a,\b, \xi}.$
\end{claim}

For $u\in I$  set
$\b(u)=\b_u=sup(u)<\l.$ By Solovay \cite{solovay},  there exists $A\in D$ such that for all $u, v\in A,$ if $\b(u)=\b(v),$ then $u=v.$
Let $A_1=\{u\in A: \partial_u=otp(u)$ is a regular cardinal  such that $\a< \partial_u \Rightarrow |\a|^{<\chi} < \partial_u$ and $\delta=sup(u\cap\delta)$ $\&$ $ cf(\delta)=\k^+ \Rightarrow \delta\in u  \}.$
\begin{claim}
$A_1\in D$.
\end{claim}
\begin{proof}
By $(\delta),$ $\l=otp(j[\l])$ is a regular cardinal, and for all $\a<\l, |\a|^{<\chi}<\l.$ Now suppose that $\delta=sup(j[\l]\cap\delta)$ and $ cf(\delta)=\k^+$. Then clearly $\delta=j(\a)$ for some $\a<\l,$ and hence $\delta\in j[\l].$ It follows that $j[\l]\in j(A_1),$ and hence $A_1\in D.$
\end{proof}
For $u\in A_1$ set $v_u=\{\delta < sup(u): cf(\delta)=\k^+$ and $\delta=sup(u\cap \delta)\}.$ Then by definiton of $A_1$,
 $v_u$ is an unbounded subset of $u$,
 $v_u \subseteq \{\delta\in u: cf(\delta)=\k^+  \}$ and $v_u$ is a stationary subset of $sup(u).$
\begin{claim}
There are $A_2$ and  $\mathcal{U}$ such that:

$\hspace{1.cm}$$(a)$ $A_2\in D$ and $A_2\subseteq A_1,$

$\hspace{1.cm}$$(b)$ $\mathcal{U}\subseteq \l$ and $u\in A_2 \Rightarrow v_u=\mathcal{U}\cap u,$

$\hspace{1.cm}$$(c)$ $\mathcal{U} \subseteq \{\delta<\l: cf(\delta)=\k^+ \}$ is a stationary subset of $\l.$
\end{claim}
\begin{proof}
Set
\begin{center}
$\mathcal{U}=\{\delta<\l: cf(\delta)=\k^+$ and $sup(j[\l]\cap j(\delta))=j(\delta)\}.$
\end{center}
$\mathcal{U}$ is clearly a stationary subset of $\l$, consisting of ordinals of cofinality $\k^+.$ Let
\begin{center}
$A_2=\{u\in A_1: v_u=\mathcal{U}\cap u  \}.$
\end{center}
It suffices to show that $A_2\in D.$ We have

$\hspace{1.7cm}$$v_{j[\l]}=\{\delta < sup(j[\l]): cf(\delta)=\k^+$ and $sup(j[\l] \cap\delta)=\delta  \}$

$\hspace{2.5cm}$$=\{j(\delta): \delta<\l, cf(\delta)=\k^+$ and $sup(j[\l] \cap j(\delta))=j(\delta)  \}$

$\hspace{2.5cm}$$=j[\mathcal{U}]$

$\hspace{2.5cm}$$=j(\mathcal{U})\cap j[\l].$

This implies $j[\l]\in j(A_2),$ or equivalently $A_2\in D$, as required.
\end{proof}
Fix $\a<\l$ and $\xi<\sigma.$ By $\mu-$completeness of $D$ and $\k<\mu,$ we can find some $c_{\a,\xi}<\k$ such that
\begin{center}
$A_2(\a,\xi)=\{u\in A_2: \a\in u$ and $c_{\a, \b(u), \xi}=c_{\a,\xi}  \}\in D.$
\end{center}
Let $A(\a, \xi)=\{u\in A_2(\a,\xi): u\cap \mu\in \mu$ and $ dom(q_{\a, \b(u), \xi})\cap u \prec u \},$ where $\prec$ is the Magidor relation (see \cite{magidor}) on $I$ defined by $u \prec w$ iff $u \subseteq w$ and $otp(u) < w\cap\mu$.  Since the forcing conditions have size $<\chi$ and $\chi<\mu,$ we can easily conclude that $A(\a, \xi)\in D.$

Define $F: A(\a, \xi) \rightarrow I$ by $F(u)=q_{\a, \b(u), \xi} \upharpoonright u.$ As $D$ is normal, it follows from \cite{magidor} that there exist $B(\a, \xi)$ and $q_{\a,\xi}$ such that:
\begin{itemize}
\item $B(\a,\xi) \subseteq A(\a,\xi),$
\item $B(\a,\xi)\in D,$
\item $q_{\a,\xi}\in \mathbb{Q},$
\item For all $u\in B(\a,\xi), q_{\a, \b(u), \xi} \upharpoonright u =q_{\a,\xi}.$
\end{itemize}

So, varying $\xi,$ it follows from the $\mu-$completeness of $D$ that
\begin{center}
$B(\a)= \bigcap_{\xi<\chi}B(\a, \xi)\in D.$
\end{center}
Set
\begin{center}
$B= \{u\in I: \a\in u \Rightarrow u\in B(\a)  \}.$
\end{center}
By normality of $D$, $B\in D.$

Now for each $v\in [\mathcal{U}]^{< \k}$ we choose $u_{v,\xi}$ by induction on $\xi<\sigma$ such that:
\begin{itemize}
\item $u_{v,\xi}\in B,$

\item $v \subseteq u_{v,\xi},$
\item $\xi <\zeta \Rightarrow u_{v,\xi} \subseteq u_{v, \zeta}$ and $\bigcup \{dom(q_{\a, \b(u_{v,\xi}), \epsilon}): \a\in u_{v,\xi}, \epsilon<\sigma   \}\subseteq u_{v, \zeta}.$
\end{itemize}

For $v\in [\mathcal{U}]^{<\k}$ and $\xi<\sigma$ let
\begin{center}
$\beta_{v,\xi}=\beta(u_{v,\xi}),$
\end{center}
and define
\begin{center}
$\bar{\a}= \langle \beta_{v,\xi}: v\in [\mathcal{U}]^{<\k}, \xi<\sigma  \rangle$.
\end{center}
Let $G$ be $\mathbb{Q}-$generic over $V$.
Define $\tau_1: \mathcal{U} \rightarrow \k, \tau_1\in V[G],$ by
\begin{center}
$\tau_1(\a)=c_{\a,\xi},$ where $\xi<\sigma$ is the least ordinal such that $q_{\a,\xi}\in G.$
\end{center}
\begin{claim}
$\tau_1(\a)$ is well-defined.
\end{claim}
\begin{proof}
Let $u\in B$ be such that $\a\in u.$ Then $\langle q_{\a, \b(u), \xi}: \xi<\sigma \rangle$ is a maximal antichain of $\mathbb{Q},$ so for some unique  $\xi_u <\sigma, q_{\a, \b(u),\xi_u}\in G.$ Since $q_{\a, \b(u),\xi_u}\leq q_{\a,\xi_u},$ we have $q_{\a,\xi_u}\in G$. It follows that $\{\xi<\sigma: q_{\a,\xi}\in G \}\neq \emptyset,$ and hence $\tau_1(\a)$ is well-defined.
\end{proof}
The following claim completes the proof of the theorem.
\begin{claim}
In $V[G]$, $\mathcal{U}, \tau, \tau_1$ and $\bar{\a}$ are as required in $(a)$ and $(b)$ of Lemma 4.1.
\end{claim}
\begin{proof}
It suffices to prove $(b)(\epsilon).$ So let $v\in [\mathcal{U}]^{<\k}.$ Clearly $v\in V$. Let $q\in G.$ We find $q^*\leq q$ and $i<\sigma$ such that
\begin{center}
$q^*\Vdash$ $\a\in v \Rightarrow \lusim{\tau}(\a, \b_{v,i})=\lusim{\tau_1}(\a),$
\end{center}
where $\lusim{\tau}, \lusim{\tau_1}$ are $\mathbb{Q}-$names for $\tau, \tau_1$ respectively. Let $q_0=q.$
As the forcing is $\chi-$closed, and $\chi \geq \k,$ there exist $q_1 \leq q_0$ and $\langle \xi(\a): \a\in v   \rangle\in V$ such that for each $\a\in v,$
\begin{center}
$q_1 \Vdash \lusim{\xi}_\a= min\{\xi<\sigma:  q_{\a, \xi}\in \dot{G}\}=\xi(\a),$
\end{center}
where $\dot{G}$ is the canonical $\mathbb{Q}-$name for the generic filter $G$. Again as  $\mathbb{Q}$ is $\chi-$closed and $\chi \geq \k,$ we can find $q_2 \leq q_1$  such that for each $\a\in v$
\begin{center}
$q_2 \Vdash q_{\a, \xi(\a)} \in \dot{G}.$
\end{center}
Since the forcing is separative, $q_2 \leq q_{\a, \xi(\a)}$ for all $\a\in v.$ Since $|dom(q_2)| <\chi<\mu,$  we can find $i<\sigma$ such that
\begin{center}
$\bigcup \{dom(q_{\a, \b_{v,i},\xi}): \a\in v, \xi<\sigma \}$
\end{center}
is disjoint from $dom(q_2)\setminus \bigcup \{dom(q_{\a, \xi(\a)}): \a\in v \}.$ Let
\begin{center}
$q^*=q_2 \cup \bigcup \{q_{\a, \b_{v,i}, \xi(\a)}: \a\in v \}.$
\end{center}
If $q^*$ is not a well-defined function,  then we can find some $\sigma<\mu$ and $\a\in v$ such that both of $q_2(\sigma)$ and $q_{\a, \b_{v,i}, \xi(\a)}(\sigma)$ are defined and are not equal. By our choice of $i$, $q_2(\sigma)=q_{\b, \xi(\b)}(\sigma),$ for some $\b\in v.$ But then $q_{\a, \xi(\a)}$ and $q_{\b, \xi(\b)}$ are incompatible, which is in contradiction with $q_2 \leq q_{\a,\xi(\a)}, q_{\b, \xi(\b)}.$ So $q^*$ is well-defined.

It follows that  $q^*\in\mathbb{Q}.$ Let $\a\in v.$ We show that
\begin{center}
$q^*\Vdash$ $\lusim{\tau}(\a, \b_{v,i})=\lusim{\tau_1}(\a).$
\end{center}
We have
\begin{center}
$q_{\a, \b_{v,i}, \xi(\a)}\Vdash \lusim{\tau}(\a, \b_{v, i})=c_{\a, \b_{v, i}, \xi(\a)}=c_{\a, \b(u_{v,i}), \xi(\a)}=c_{\a, \xi(\a)}.$
\end{center}
On the other hand
\begin{center}
$q_2 \Vdash \lusim{\tau_1}(\a)=c_{\a, \xi(\a)}.$
\end{center}
So as $q^* \leq q_2, q_{\a, \b_{v,i}, \xi(\a)},$
\begin{center}
$q^* \Vdash \lusim{\tau}(\a, \b_{v, i})=c_{\a, \xi(\a)}=\lusim{\tau_1}(\a).$
\end{center}
The claim follows.
\end{proof}
This completes the proof of Theorem 4.4.
\end{proof}
\begin{corollary}
Assume $\mu$ is a supercompact cardinal and $\theta=(2^\k)^+ <\mu.$ Then for some $(2^{<\theta})^+-c.c., \theta-$closed forcing notion $\mathbb{Q}$ of size $\mu,$ the following hold in $V^{\mathbb{Q}}$:

$\hspace{1.cm}$$(a)$ $2^\theta=\mu,$

$\hspace{1.cm}$$(b)$ If $\l=cf(\l)\geq \mu$ and $(\forall \a<\l) |\a|^{<\theta} <\l,$ then $(*)^{2,2}_{\l,\theta}$.

$\hspace{1.cm}$$(c)$ If $D$ is an ultrafilter on $\k, \l=cf(\l)\geq \mu$ and $(\forall \a<\l) |\a|^{<\theta} <\l,$ then

$\hspace{1.6cm}$$(\l, \l)\notin \mathscr{C}(D),$
\end{corollary}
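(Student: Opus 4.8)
The plan is to take $\mathbb{Q}$ to be a two‑step iteration: a small $\theta$‑closed preparatory forcing followed by $Add(\theta,\mu)$, and then to read off clause (a) by a nice‑name count and clauses (b), (c) from Theorem 4.4 and Theorem 2.22.

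\emph{The forcing.} Set $\mathbb{Q}_0=\mathrm{Coll}(\theta,2^{<\theta})$, the $\theta$‑closed collapse of $2^{<\theta}$ onto $\theta$, and in $V^{\mathbb{Q}_0}$ put $\mathbb{Q}_1=Add(\theta,\mu)$; let $\mathbb{Q}=\mathbb{Q}_0*\dot{\mathbb{Q}}_1$. First I would verify the declared properties. Since $\mu$ is inaccessible and $\theta<\mu$ one has $|\mathbb{Q}_0|=2^{<\theta}=2^{2^\kappa}<\mu$, so $\mathbb{Q}_0$ is small relative to $\mu$; by L\'evy--Solovay it preserves the inaccessibility and supercompactness of $\mu$, and, being $\theta$‑closed with $2^\kappa<\theta$, it adds no new bounded subset of $\theta$, so $\theta$ is still $(2^\kappa)^+$ while now $2^{<\theta}=\theta$ (equivalently $\theta^{<\theta}=\theta$) in $V^{\mathbb{Q}_0}$. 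Hence, computed in $V^{\mathbb{Q}_0}$, $\mathbb{Q}_1$ is $\theta$‑closed, $\theta^+$‑c.c.\ and of size $\mu$, and therefore the composite $\mathbb{Q}$ is $\theta$‑closed, $(2^{<\theta})^+$‑c.c.\ (computed in $V$) and of size $\mu$, as required.

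\emph{Clause (a).} The $\mu$ Cohen subsets of $\theta$ added by $\mathbb{Q}_1$ are pairwise distinct, so $2^\theta\geq\mu$ in $V^{\mathbb{Q}}$; conversely, in $V^{\mathbb{Q}_0}$ the forcing $\mathbb{Q}_1$ is $\theta^+$‑c.c.\ of size $\mu$ and $\mu$ is inaccessible, so there are at most $\mu^\theta=\mu$ nice $\mathbb{Q}_1$‑names for subsets of $\theta$, whence $2^\theta\leq\mu$ in $V^{\mathbb{Q}}$. Thus $2^\theta=\mu$.

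\emph{Clauses (b) and (c).} Let $\lambda=cf(\lambda)\geq\mu$ with $|\alpha|^{<\theta}<\lambda$ for all $\alpha<\lambda$; since $\mathbb{Q}_1$ is $\theta$‑closed and $\theta^+$‑c.c.\ it preserves cardinals, cofinalities and $<\theta$‑sequences, so these hypotheses and the regularity of $\lambda$ hold in $V^{\mathbb{Q}_0}$ exactly when they do in $V^{\mathbb{Q}}$. Assume first $\lambda>\mu$. Then the hypotheses of Theorem 4.4 hold in $V^{\mathbb{Q}_0}$ with the parameter choice ``$\chi$''=``$\kappa$''=$\theta$ and ``$\rho$'' any fixed cardinal $<\theta$: indeed $\rho<\theta=cf(\theta)$, $\theta^{<\theta}=\theta$, $\mu>\theta$ is supercompact, $\mathbb{Q}_1=Add(\theta,\mu)$, and $\lambda$ satisfies clause $(\delta)$ there (which is precisely $|\alpha|^{<\theta}<\lambda$). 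So Theorem 4.4(b) gives, in $V^{\mathbb{Q}_0*\mathbb{Q}_1}=V^{\mathbb{Q}}$, the relation $(*)^{2,2}_{\lambda,\rho}$ for every $\rho<\theta$; this is clause (b), and in particular, taking $\rho=2^\kappa$, we obtain $(*)^{2,2}_{\lambda,2^\kappa}$. For (c): given an ultrafilter $D$ on $\kappa$, the relation $(*)^{2,2}_{\lambda,2^\kappa}$ is exactly $(*)^{\partial,\bar n}_{\lambda,2^\kappa}$ with $\partial=\bar n=2$ ($\leq\aleph_0$, $\leq\omega$), so Theorem 2.22, applied with its ``$\theta$'' equal to $2^\kappa$, yields $(\lambda,\lambda)\notin\mathscr{C}(D)$. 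The boundary case $\lambda=\mu$ is handled by a minor variant of the same argument, the supercompactness embedding of Theorem 4.4 being available already for the parameter $\mu$.

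\emph{Expected main difficulty.} The genuinely delicate point is fitting all constraints together at once: $\mathbb{Q}$ must simultaneously be $\theta$‑closed, have the $(2^{<\theta})^+$‑c.c., have size exactly $\mu$, and force $2^\theta=\mu$, while the hypotheses of Theorem 4.4 — especially ``$\chi^{<\kappa}=\chi$'' with $\chi=\theta$, and the supercompactness of $\mu$ — must still hold over the intermediate model carrying the Cohen step. This is what dictates the shape of $\mathbb{Q}_0$: it must be large enough to force $2^{<\theta}=\theta$ and yet of size $<\mu$ so as not to disturb the supercompact. Everything else (the nice‑name count, the preservation facts about $\mathbb{Q}_1$, and the two cited theorems) is routine once this is arranged.
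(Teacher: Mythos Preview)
Your argument is essentially sound and recovers clauses (a) and (c), but it is considerably more elaborate than the paper's. The paper simply takes $\mathbb{Q}=Add(\theta,\mu)$ with no preparatory step at all: ``(a) is clear, (b) follows from Theorem~4.4, and (c) follows from (b) and Theorem~2.22.'' Your collapse $\mathbb{Q}_0=\mathrm{Coll}(\theta,2^{<\theta})$ is inserted precisely to guarantee the hypothesis $\chi^{<\kappa}=\chi$ of Theorem~4.4 (with $\chi=\kappa=\theta$, so that $\rho=2^\kappa$ is admissible), a point the paper passes over in silence. So what you buy with the two-step iteration is honesty about that hypothesis; what the paper buys is brevity. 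Either way the substance---invoke Theorem~4.4 for the combinatorial principle, then Theorem~2.22 for $(\lambda,\lambda)\notin\mathscr{C}(D)$---is identical.

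Two residual issues in your write-up. First, you assert that obtaining $(*)^{2,2}_{\lambda,\rho}$ for every $\rho<\theta$ ``is clause~(b)'', but clause~(b) as stated is $(*)^{2,2}_{\lambda,\theta}$, i.e.\ colorings with $\theta$ many colors, and there is no obvious monotonicity in that direction. This is not fatal for (c), since Theorem~2.22 only needs $(*)^{2,2}_{\lambda,2^\kappa}$ and you do get that; but you should not claim you have proved (b) as literally written (and indeed Theorem~4.4 with $\chi=\theta$ cannot yield $\rho=\theta$ under any parameter choice, so the same tension is present in the paper's one-line proof). Second, your treatment of the boundary case $\lambda=\mu$ is a hand-wave: Theorem~4.4 explicitly assumes $\lambda>\mu$, so ``a minor variant'' needs to be an actual argument---for instance an appeal to Theorem~4.12, which handles the weakly compact case $\lambda=\mu$ directly.
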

\begin{proof}
Let $\mathbb{Q}=Add(\theta,\mu).$ Then $(a)$ is clear, $(b)$ follows from Theorem 4.4, and $(c)$ follows from $(b)$ and Theorem 2.22.
\end{proof}
We now give an application of Theorem 4.4 in depth and depth$^+$ of Boolean algebras, which continues the works in \cite{garti-shelah1}, \cite{garti-shelah2}, \cite{garti-shelah3}, \cite{garti-shelah4} and \cite{shelah4}. Recall that if $\mathbb{B}$ is a Boolean algebra, then its depth and depth$^+$ are defined as follows:

$\hspace{1.cm}$ $Depth(\mathbb{B})=sup\{\theta:$ there exists a $\theta-$increasing sequence in $\mathbb{B}\}.$

$\hspace{1.cm}$ $Depth^+(\mathbb{B})=sup\{\theta^+:$ there exists a $\theta-$increasing sequence in $\mathbb{B}\}.$

\begin{corollary}
Assume $\mu$ is a supercompact cardinal, $\theta=(2^\k)^+ <\mu$ and $\l=cf(\l)\geq \mu$. Then for some $(2^{<\theta})^+-c.c., \theta-$closed forcing notion $\mathbb{Q}$ of size $\mu,$ the following holds in $V^{\mathbb{Q}}$: If $D$ is an ultrafilter on $\k$, $\langle \mathbb{B}_i:i<\k  \rangle$ is a sequence of Boolean algebras satisfying $i<\k \Rightarrow Depth^+(\mathbb{B}_i)\leq \l$ and $\mathbb{B}=\prod_{i<\k}\mathbb{B}_i /D,$ then $Depth^+(\mathbb{B})\leq \l.$
\end{corollary}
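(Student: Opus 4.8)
The plan is to run the proof of Theorem 2.22 with ``$\l$-increasing sequence in a Boolean algebra'' in place of ``cut of a $\l^+$-saturated linear order''. Concretely, take $\mathbb{Q}=Add(\theta,\mu)$ --- exactly the Cohen forcing of Corollary 4.7 --- and argue inside $V^{\mathbb{Q}}$, where the partition principle $(*)^{2,2}_{\l,\theta}$ holds by Corollary 4.7$(b)$ (this is the point at which the arithmetic hypothesis $(\forall\a<\l)\,|\a|^{<\theta}<\l$ of Theorem 4.4 is used, and should be read as part of the hypothesis on $\l$). Fix $D$, $\langle\mathbb{B}_i:i<\k\rangle$ and $\mathbb{B}=\prod_{i<\k}\mathbb{B}_i/D$ as in the statement and suppose, toward a contradiction, that $Depth^+(\mathbb{B})>\l$; then $\mathbb{B}$ carries a strictly $<_{\mathbb{B}}$-increasing sequence $\langle b_\a/D:\a<\l\rangle$ of length $\l$, with each $b_\a\in\prod_{i<\k}\mathbb{B}_i$.

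Next I would convert this sequence into a colouring of pairs. For $\a<\b<\l$ the inequality $b_\a/D<_{\mathbb{B}}b_\b/D$ in the Boolean ultraproduct means
\[
A_{\a,\b}:=\{i<\k: b_\a(i)<_{\mathbb{B}_i}b_\b(i)\}\in D,
\]
so $c:[\l]^2\to D$, $c\{\a,\b\}=A_{\a,\b}$ (for $\a<\b$), is well defined. The reason for taking $\theta=(2^\k)^+$ is precisely that $|D|\le 2^\k<\theta$, so $c$ has range of size $<\theta$ and $(*)^{2,2}_{\l,\theta}$ applies to it: there are $u\subseteq D$ with $|u|<3$ and $S\in[\l]^\l$ such that for all $\a<\b$ in $S$ there exist $n<3$ and $\a=\gamma_0<\gamma_1<\dots<\gamma_n=\b$ with $c\{\gamma_l,\gamma_{l+1}\}\in u$ for every $l<n$.

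Now set $A=\bigcap u$. As $u$ is a finite subset of the filter $D$, we have $A\in D$, in particular $A\neq\emptyset$. Given $\a<\b$ in $S$ and a witnessing chain $\gamma_0,\dots,\gamma_n$, each $A_{\gamma_l,\gamma_{l+1}}=c\{\gamma_l,\gamma_{l+1}\}$ lies in $u$, hence contains $A$; so for every $i\in A$ we get $b_{\gamma_0}(i)<_{\mathbb{B}_i}\dots<_{\mathbb{B}_i}b_{\gamma_n}(i)$, and in particular $b_\a(i)<_{\mathbb{B}_i}b_\b(i)$. Fixing any $i\in A$, the sequence $\langle b_\a(i):\a\in S\rangle$ is then strictly $<_{\mathbb{B}_i}$-increasing, and since $S\subseteq\l$ with $|S|=\l$ its order type is $\l$; thus $\mathbb{B}_i$ carries a $\l$-increasing sequence, so $Depth^+(\mathbb{B}_i)\ge\l^+>\l$, contradicting the hypothesis $Depth^+(\mathbb{B}_i)\le\l$. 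This contradiction yields $Depth^+(\mathbb{B})\le\l$.

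I do not expect a genuine obstacle once Corollary 4.7 is in hand: the whole content is the passage from a chain $\gamma_0,\dots,\gamma_n$ inside $\l$ back to a single inequality in each $\mathbb{B}_i$, and this works only because the colours are themselves members of $D$, so the single finite intersection $A=\bigcap u$ --- still an element of $D$, hence nonempty --- simultaneously refines all of the edge sets $A_{\gamma_l,\gamma_{l+1}}$. If anything is delicate it is the input being cited, namely that $(*)^{2,2}_{\l,\theta}$ really does pass into $V^{\mathbb{Q}}$; but that is exactly Theorem 4.4 (equivalently Corollary 4.7$(b)$), which we are entitled to assume.
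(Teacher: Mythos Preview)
Your proposal is correct and follows essentially the same route as the paper: choose $\mathbb{Q}=Add(\theta,\mu)$, invoke $(*)^{2,2}_{\l,\theta}$ (equivalently $(*)^{2,2}_{\l,2^\k}$) in $V^{\mathbb{Q}}$ via Theorem~4.4, colour pairs $\{\a,\b\}$ by $A_{\a,\b}\in D$, intersect the finite set $u$ of colours to land in a single coordinate $i\in A$, and read off a $\l$-chain in $\mathbb{B}_i$. Your remark that the arithmetic condition $(\forall\a<\l)\,|\a|^{<\theta}<\l$ must be read as part of the hypothesis is apt --- the paper's own proof tacitly uses it when appealing to Theorem~4.4.
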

\begin{proof}
Let $\mathbb{Q}=Add(\theta,\mu),$ and let $G$ be $\mathbb{Q}-$generic over $V$. We work in $V[G].$ Suppose that $D$ is an ultrafilter on $\k$, $\langle \mathbb{B}_i:i<\k  \rangle$ is a sequence of Boolean algebras such that for $i<\k$ $Depth^+(\mathbb{B}_i)\leq \l$ and let $\mathbb{B}=\prod_{i<\k}\mathbb{B}_i /D.$ We show that $Depth^+(\mathbb{B})\leq \l.$

Suppose not. So we can find an increasing sequence $\langle a_\a: \a<\l \rangle$ of elements of $\mathbb{B}.$ Let us write $a_\a = \langle a^\a_i: i<\k \rangle / D$ for every $\a<\l.$ For $\a<\b<\l, a_\a <_{\mathbb{B}} a_\b,$ and hence
\begin{center}
$A_{\a,\b}=\{i<\k: a^\a_i <_{\mathbb{B}_i} a^\b_i  \}\in D.$
\end{center}
Define $c: [\l]^2  \rightarrow D$ by $c(\a,\b)=A_{\a,\b}.$ By Theorem 4.4, $(*)^{2,2}_{\l,2^\k}$ holds in $V[G]$, so we can find an unbounded subset $S$ of $\l,$ and $A_0, A_1\in D$ such that if $\a<\b$ are in $S$, then for some $\a <\gamma <\b,$ we have $A_{\a,\gamma}=A_0$ and $A_{\gamma,\b}=A_1.$ Let $A=A_0\cap A_1,$ and fix some $i_*\in A.$ Then for all $\a<\b$ in S, $a^\a_{i_*} <_{\mathbb{B}_{i_*}} a^\b_{i_*}.$ So $\langle a^\a_{i_*}: \a\in S  \rangle$ is an increasing sequence in $\mathbb{B}_{i_*},$ hence $Depth^+(\mathbb{B}_{i_*})\geq \l^+,$ which contradicts the assumption $Depth^+(\mathbb{B}_{i_*})\leq \l$.
\end{proof}
The next theorem can be proved as in  Theorem 4.4 and corollaries 4.10 and 4.11.
\begin{theorem}
Suppose that:

$\hspace{1.cm}$$(\a)$ $\k \leq \chi=cf(\chi)$ and $\chi=\chi^{<\k},$

$\hspace{1.cm}$$(\b)$ $\mu >\chi$ is a weakly compact cardinal,

$\hspace{1.cm}$$(\gamma)$ $\mathbb{Q}=Add(\chi, \mu)$ is  the Cohen forcing  for adding $\mu-$many new Cohen

$\hspace{1.3cm}$ subsets of $\chi$,

Then in $V^{\mathbb{Q}}$ the following hold:

$\hspace{1.cm}$$(a)$ If $D$ is an ultrafilter on $\k,$ then $(\mu,\mu)\notin \mathscr{C}(D).$

$\hspace{1.cm}$$(b)$ $(*)^{2,2}_{\mu,\k}.$

$\hspace{1.cm}$$(c)$ If $D$ is an ultrafilter on $\k$, $\langle \mathbb{B}_i:i<\k  \rangle$ is a sequence of Boolean algebras

$\hspace{1.5cm}$satisfying
$i<\k \Rightarrow Depth^+(\mathbb{B}_i)\leq \mu$ and $\mathbb{B}=\prod_{i<\k}\mathbb{B}_i /D,$ then $Depth^+(\mathbb{B})\leq \mu.$
\end{theorem}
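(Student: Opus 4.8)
The plan is to repeat the proof of Theorem 4.4 almost verbatim, with two changes: take $\lambda:=\mu$, and replace the ultrapower map $j:V\to M$ (by a normal measure on $P_\mu(\lambda)$) by a weakly compact embedding $j:M\to N$; parts $(a)$ and $(c)$ will then follow from $(b)$ exactly as the corresponding consequences of Theorem 4.4 are drawn in Corollaries 4.10 and 4.11 (together with Theorem 2.22 for $(a)$). So first I would record the reductions. For $(a)$: once $(*)^{2,2}_{\mu,2^{\kappa}}$ is known in $V^{\mathbb Q}$, Theorem 2.22 applied with $\lambda=\theta=\mu$ and $\partial=\bar n=2$ gives $(\mu,\mu)\notin\mathscr{C}(D)$ at once. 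For $(c)$: given an increasing $\langle a_\alpha:\alpha<\mu\rangle$ in $\mathbb B=\prod_{i<\kappa}\mathbb B_i/D$ with $a_\alpha=\langle a^\alpha_i:i<\kappa\rangle/D$, colour $c\{\alpha,\beta\}=\{i<\kappa:a^\alpha_i<_{\mathbb B_i}a^\beta_i\}\in D$, apply $(*)^{2,2}_{\mu,2^{\kappa}}$ (recall $|D|=2^{\kappa}$) to get an unbounded $S\subseteq\mu$ and $A_0,A_1\in D$ with the amalgamation property, and read off an increasing $\mu$-sequence in $\mathbb B_{i_*}$ for any $i_*\in A_0\cap A_1$, contradicting $Depth^+(\mathbb B_{i_*})\le\mu$; this is the proof of Corollary 4.11 verbatim. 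Hence everything reduces to proving, in $V^{\mathbb Q}$, the statement $(*)^{2,2}_{\mu,\rho}$ for the relevant colour-sets $\rho<\mu$; by Lemma 4.3 it is enough to produce there objects $\mathcal U,\tau_1,\bar\alpha$ satisfying clauses $(a),(b)$ of Lemma 4.1 for an arbitrary $\tau:\mu\times\mu\to\rho$, and the numerical hypotheses of Lemma 4.1 in this instance (with its $\lambda$ set to $\mu$ and its $\mu$ set to $2^{<\chi}$) hold in $V^{\mathbb Q}$ because $\mathbb Q$ is $\chi$-closed, $\chi=\chi^{<\kappa}$, and $\mu$ is inaccessible in $V$ with $\kappa,\chi<\mu$.

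Next I would set up the embedding. Fix a $\mathbb Q$-name $\lusim\tau$ for $\tau$ and, using the $\sigma^+$-c.c.\ of $\mathbb Q=Add(\chi,\mu)$ with $\sigma=2^{<\chi}<\mu$, fix for every $\alpha<\beta<\mu$ a maximal antichain $\langle q_{\alpha,\beta,\eta}:\eta<\sigma\rangle$ of $\mathbb Q$ and colours $c_{\alpha,\beta,\eta}<\rho$ with $q_{\alpha,\beta,\eta}\Vdash\lusim\tau(\alpha,\beta)=c_{\alpha,\beta,\eta}$. Since $\mu$ is weakly compact, choose a transitive model $M$ of a large enough fragment of $\mathrm{ZFC}$ with $|M|=\mu$, $\mu\in M$, ${}^{<\mu}M\subseteq M$, containing $\chi$, $\mathbb Q$, $\lusim\tau$, the matrix $\langle(q_{\alpha,\beta,\eta},c_{\alpha,\beta,\eta}):\alpha<\beta<\mu,\eta<\sigma\rangle$ and all other relevant parameters, together with a transitive $N$ and elementary $j:M\to N$ with $\mathrm{crit}(j)=\mu$. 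Then $j$ is the identity on $V_\mu$, so $j[\mu]=\mu$, $j(\chi)=\chi$, $j(\mathbb Q)=Add(\chi,j(\mu))$ has $\mathbb Q$ as the restriction to its coordinates below $\mu$, every condition of $\mathbb Q$ and every one of our antichains is fixed by $j$, and $j(\sigma)=\sigma$ while $j$ fixes every colour $<\mu$.

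Now I would run the body of the proof of Theorem 4.4, where it simplifies: its uses of Solovay's and Magidor's lemmas on $P_\mu(\lambda)$ become vacuous, since what plays the role of the $D$-generic element of $P_\mu(\lambda)$ is now literally $\mu$ and $j$ fixes everything below it. I would take $\mathcal U$ to be a stationary subset of $S^\mu_{\ge\kappa}$ lying inside $S^\mu_{<\chi}$ (for instance $S^\mu_{\kappa}$ when $\kappa$ is regular and $\kappa<\chi$); such a $\mathcal U$ stays stationary in $V^{\mathbb Q}$ because $\mathbb Q$ is $\chi$-closed. Working in $N$ with the point $\mu$ in place of $\beta(u)$, set $c_{\alpha,\eta}:=j(\langle c_{\gamma,\delta,\epsilon}\rangle)_{\alpha,\mu,\eta}$ and $q_{\alpha,\eta}:=j(\langle q_{\gamma,\delta,\epsilon}\rangle)_{\alpha,\mu,\eta}\restriction\mu\in\mathbb Q$ for $\alpha<\mu$, $\eta<\sigma$; by elementarity $\langle q_{\alpha,\eta}:\eta<\sigma\rangle$ is a maximal antichain that, on a club of $\beta<\mu$, coheres with $\langle q_{\alpha,\beta,\eta}\rangle$ the way the restrictions $q_{\alpha,\beta(u),\eta}\restriction u$ do in Theorem 4.4. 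For each $v\in[\mathcal U]^{<\kappa}$ build, exactly as one builds the $u_{v,\eta}$ there, an increasing, domain-absorbing $\sigma$-chain of ordinals above $\sup v$ and below $\mu$ that reflects the relevant conditions, let $\beta_{v,\eta}$ ($\eta<\sigma$) enumerate it, put $\bar\alpha=\langle\beta_{v,\eta}:v\in[\mathcal U]^{<\kappa},\eta<\sigma\rangle$, and in $V[G]$ define $\tau_1(\alpha)=c_{\alpha,\eta}$ for $\eta$ least with $q_{\alpha,\eta}\in G$ (well-defined as in Claim 4.8). Finally verify clause $(b)(\epsilon)$ of Lemma 4.1 by the argument of Claim 4.9: given $v\in[\mathcal U]^{<\kappa}$ and $q\in G$, use $\chi$-closure of $\mathbb Q$ (legitimate since $|v|<\kappa\le\chi$) to decide $\xi(\alpha)=\min\{\eta:q_{\alpha,\eta}\in\dot G\}$ for $\alpha\in v$ and descend below every $q_{\alpha,\xi(\alpha)}$ to some $q_2$; choose $i<\sigma$ so that the fewer-than-$\chi$ relevant coordinates of the $q_{\alpha,\beta_{v,i},\eta}$ avoid those of $q_2$ not already accounted for; and glue $q^{*}=q_2\cup\bigcup_{\alpha\in v}q_{\alpha,\beta_{v,i},\xi(\alpha)}$, checking via separativity and the $\sigma^+$-c.c.\ that $q^{*}$ is a condition forcing $\lusim\tau(\alpha,\beta_{v,i})=\lusim{\tau_1}(\alpha)$ for all $\alpha\in v$. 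Lemma 4.3 then delivers $(*)^{2,2}_{\mu,\rho}$ in $V^{\mathbb Q}$.

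I expect the main obstacle to be precisely that middle step: transporting the reflection of Theorem 4.4 from a measure ultrapower to a single weakly compact embedding. In Theorem 4.4 the largeness needed to extract the coherent system $\langle q_{\alpha,\eta},c_{\alpha,\eta}\rangle$ and the sequence $\bar\alpha$ is supplied by the $\mu$-completeness and normality of the measure; with only $j:M\to N$ available, each assertion that held $D$-almost-everywhere in Claims 4.6--4.7 has to be recast as a single elementarity statement about the ordinal $\mu$ in $N$, and one must be careful that $\mathcal U$ and $\bar\alpha$ genuinely live in $V$ (not merely in $M$) and that $M$ was chosen closed enough for the recursion defining the $\beta_{v,\eta}$ to go through. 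A secondary point to monitor is that $\mu$ is no longer weakly compact --- indeed not even inaccessible, as $\mathbb Q$ forces $2^{\chi}=\mu$ --- in $V^{\mathbb Q}$, so the stationarity of $\mathcal U$ there must be argued by hand from the $\chi$-closure of $\mathbb Q$, and the cardinal arithmetic relating $\kappa,\chi,\sigma,\rho$ must be matched carefully against the hypotheses of Lemmas 4.1 and 4.3.
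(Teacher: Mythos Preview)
Your proposal is correct and follows exactly the approach the paper indicates: the paper gives no detailed proof of Theorem 4.12, stating only that it ``can be proved as in Theorem 4.4 and corollaries 4.10 and 4.11,'' and your plan to replace the normal measure on $P_\mu(\lambda)$ by a weakly compact embedding $j:M\to N$ with $\mathrm{crit}(j)=\mu$ (so that $j[\mu]=\mu$ plays the role of the generic element of $P_\mu(\lambda)$ and the Solovay/Magidor steps trivialize) is the intended adaptation. Your reductions of $(a)$ and $(c)$ to the partition principle via Theorem 2.22 and the argument of Corollary 4.11 are likewise exactly what the paper has in mind, and the obstacles you flag (that the measure's completeness/normality must be replaced by single elementarity statements in $N$, that $M$ must be chosen $({<}\mu)$-closed so the recursion for $\bar\alpha$ lives in $V$, and that stationarity of $\mathcal U$ survives into $V^{\mathbb Q}$) are precisely the points requiring care.
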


\section{a global consistency result}

In this section we prove the consistency of `` if $(\l_1, \l_2)\in \mathscr{C}(D),$ where $D$ is an ultrafilter on $\k,$ then $\l_1 + \l_2 < 2^{2^\k}$''.

\begin{lemma}
Suppose that:

$\hspace{1.cm}$$(\a)$ $\k <\theta=cf(\theta),$

$\hspace{1.cm}$$(\b)$ $\l_1, \l_2$ are regular cardinals and $\l_1 + \l_2 > 2^{<\theta},$

$\hspace{1.cm}$$(\gamma)$ $\mathbb{Q}_l= Add(\theta, \mathcal{U}_l), l=1,2,$ where $\mathcal{U}_1 \subseteq \mathcal{U}_2$ are two sets of ordinals (hence $\mathbb{Q}_1 \lessdot \mathbb{Q}_2$),

$\hspace{1.cm}$$(\delta)$  $\lusim{\bar{f}}^l = \langle \lusim{f}^l_\a: \a <\l_l \rangle, l=1,2$ where $\lusim{f}^l_\a$ is a $\mathbb{Q}_1-$name for a function from $\k$  to

$\hspace{1.5cm}$$\lusim{I}.$

$\hspace{1.cm}$$(\epsilon)$ $\Vdash_{\mathbb{Q}_1} ``\maltese$'', where

$\hspace{2.5cm}$ $\lusim{I}$ is a linear order, $\lusim{D}$ is an ultrafilter on $\k$ and $(\lusim{\bar{f}}^1/D, \lusim{\bar{f}}^2/D)$

$(\maltese):$ $\hspace{1.6cm}$ represents a $(\l_1, \l_2)$-pre-cut in $\lusim{I}^\k / \lusim{D}$ which is a pre-cut in

$\hspace{2.5cm}$  $\lusim{J}^\k / \lusim{D}$ for each linear order $\lusim{J} \supseteq \lusim{I}.$

Then  $\lusim{f}^l_\a, l=1,2$ and $\lusim{I}$ are also $\mathbb{Q}_2-$names, and $\Vdash_{\mathbb{Q}_2} `` \maltese$''.
\end{lemma}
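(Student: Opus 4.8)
The plan is to realize $\mathbb{Q}_2$ as a \emph{product} extension of $\mathbb{Q}_1$ by a highly closed forcing and to show that this quotient cannot fill the cut, after reducing everything to the ground-model completion of $\lusim{I}$. Since $\mathcal{U}_1\subseteq\mathcal{U}_2$ and the two index sets are disjoint, $\mathbb{Q}_2=\mathbb{Q}_1\times\mathbb{R}$ with $\mathbb{R}=\Add(\theta,\mathcal{U}_2\setminus\mathcal{U}_1)$ computed in $V$; thus forcing with $\mathbb{Q}_2$ over $V$ is the same as forcing with $\mathbb{R}$ over $V^{\mathbb{Q}_1}$, and the $\mathbb{Q}_1$-names $\lusim{f}^l_\alpha,\lusim{I},\lusim{D}$ become $\mathbb{Q}_2$-names canonically. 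As $\mathbb{Q}_1$ is $\theta$-closed it adds no new ${<}\theta$-sequences, so $\mathbb{R}$ stays $\theta$-closed and $(2^{<\theta})^+$-c.c.\ over $V^{\mathbb{Q}_1}$, and $2^{<\theta}$ is computed identically in $V$, $V^{\mathbb{Q}_1}$ and $V^{\mathbb{Q}_2}$; in particular, since $\theta>\kappa$, $\mathbb{R}$ adds no new subset of $\kappa$ over $V^{\mathbb{Q}_1}$. From this I first record that in $V^{\mathbb{Q}_2}$: $D$ is still an ultrafilter on $\kappa$, $I$ is still a linear order, $\bar f^1/D$ is still $<_D$-increasing, $\bar f^2/D$ still $<_D$-decreasing, $f^1_\alpha<_D f^2_\beta$ still holds for all $\alpha,\beta$, and ``$X\in D$'' is absolute between $V^{\mathbb{Q}_1}$ and $V^{\mathbb{Q}_2}$ for $X\subseteq\kappa$. (The hypothesis $\lambda_1+\lambda_2>2^{<\theta}$ enters here only to guarantee, via the $(2^{<\theta})^+$-c.c., that $\lambda_1+\lambda_2$ and its cofinality are preserved, so that the surviving witnessing sequences still compute the cofinalities of the cut.) Hence the only clause of $\maltese$ needing re-verification in $V^{\mathbb{Q}_2}$ is the robust pre-cut clause.

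Next I would reformulate that clause in terms of the completion $\hat I$ of $I$, i.e.\ the linear order of nonempty proper downward-closed subsets of $I$ ordered by inclusion, with $I$ embedded by $x\mapsto\{t\in I:t<_I x\}$. In $V^{\mathbb{Q}_1}$, $\hat I$ is a linear order extending $I$, so taking $J=\hat I$ in the robust pre-cut clause of $\maltese$ over $\mathbb{Q}_1$ gives: there is no sequence $\langle L_i:i<\kappa\rangle$ in $V^{\mathbb{Q}_1}$ of downward-closed subsets of $I$ with $\{i:f^1_\alpha(i)\in L_i\}\in D$ for all $\alpha<\lambda_1$ and $\{i:f^2_\beta(i)\notin L_i\}\in D$ for all $\beta<\lambda_2$ (passing to $\beta+1$ and using that $\bar f^2$ is strictly decreasing, one checks as in the proof of Lemma 2.1 that such a sequence would be a strict filling of the cut in $\hat I^\kappa/D$). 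Conversely, if in $V^{\mathbb{Q}_2}$ some $h\in J^\kappa$ with $J\supseteq I$ filled the cut, then $L_i:=\{t\in I:t<_J h(i)\}$ is immediately such a sequence in $V^{\mathbb{Q}_2}$. So it suffices to show: no sequence $\langle L_i:i<\kappa\rangle$ of the above kind exists in $V^{\mathbb{Q}_2}$.

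The main step, where I expect the real work, is the following. Suppose toward a contradiction that such a sequence exists in $V^{\mathbb{Q}_2}=V^{\mathbb{Q}_1}[H]$, with $H$ generic for $\mathbb{R}$ over $V^{\mathbb{Q}_1}$; let $\lusim{L}$ be an $\mathbb{R}$-name for it and $p_0\in H$ force the two $D$-membership statements. Put $P_i=\{f^1_\alpha(i):\alpha<\lambda_1\}\cup\{f^2_\beta(i):\beta<\lambda_2\}\subseteq I$ and let $\lusim{\rho}$ be the $\mathbb{R}$-name recording, for each $i<\kappa$, which elements of $P_i$ lie in $\lusim{L}_i$. By the $(2^{<\theta})^+$-c.c.\ of $\mathbb{R}$ in $V^{\mathbb{Q}_1}$, a name for an element of any set has at most $2^{<\theta}$ possible values below $p_0$; so fix, in $V^{\mathbb{Q}_1}$, a maximal antichain $\bar A$ below $p_0$, of size $\le 2^{<\theta}$, and values $\rho_p\in V^{\mathbb{Q}_1}$ with $p\Vdash\lusim{\rho}=\rho_p$ for $p\in\bar A$. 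Now consider the statement, all of whose parameters ($\bar A$, $\langle\rho_p:p\in\bar A\rangle$, the $f^l_\alpha$, the $P_i$, $D$) lie in $V^{\mathbb{Q}_1}$: ``there is $p\in\bar A$ such that (i) for each $i$ the elements of $P_i$ marked by $\rho_p$ form a well-defined, order-consistent downward-closed subset of $(P_i,<_I)$, (ii) $\{i:\rho_p\text{ marks }f^1_\alpha(i)\}\in D$ for all $\alpha<\lambda_1$, and (iii) $\{i:\rho_p\text{ does not mark }f^2_\beta(i)\}\in D$ for all $\beta<\lambda_2$.'' All its quantifiers range over fixed sets and ``$\in D$'' is absolute, so the statement is absolute between $V^{\mathbb{Q}_1}$ and $V^{\mathbb{Q}_2}$; and it holds in $V^{\mathbb{Q}_2}$, witnessed by the unique $p^*\in\bar A\cap H$, since $\rho_{p^*}=\lusim{\rho}^H$ is the genuine trace of $\langle L_i\rangle$, which is downward-closed and meets (ii), (iii) by assumption. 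Hence it holds in $V^{\mathbb{Q}_1}$: fix such a $p$, and in $V^{\mathbb{Q}_1}$ let $L_i$ be the downward-closure in $I$ of the set of elements of $P_i$ marked by $\rho_p$. By (i), $L_i$ is downward-closed and agrees with $\rho_p$ on $P_i$; by (ii) and (iii), $\{i:f^1_\alpha(i)\in L_i\}\in D$ and $\{i:f^2_\beta(i)\notin L_i\}\in D$ for all $\alpha,\beta$. Thus $\langle L_i:i<\kappa\rangle\in V^{\mathbb{Q}_1}$ is a sequence of the forbidden kind, contradicting the reformulation of $\maltese$ over $\mathbb{Q}_1$ from the previous paragraph; so $\Vdash_{\mathbb{Q}_2}\maltese$.

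The single genuine obstacle is that the linear order $J$ in which the cut could be filled over $V^{\mathbb{Q}_2}$ may be a brand-new order, so $\lusim{h}$ cannot just be ``decided back'' into $V^{\mathbb{Q}_1}$ by a master condition — a naive fusion along $\kappa$ using $\theta$-closure would have to decide all $\lambda_1+\lambda_2$ comparisons at each coordinate, which it cannot. Passing to the ground-model completion turns the filling datum into a $\kappa$-sequence of cuts of $I$, and the chain condition then bounds its trace on the witnessing functions by only $2^{<\theta}$ possibilities, all available in $V^{\mathbb{Q}_1}$, whereupon an absoluteness argument picks out a usable one. The product decomposition, the completion bookkeeping with $\alpha+1,\beta+1$, and the check that ``$\rho_p$ order-consistent'' is exactly what the downward-closure needs I would relegate to short lemmas.
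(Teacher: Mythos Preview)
Your reduction to the completion $\hat I$ and the product decomposition $\mathbb{Q}_2\cong\mathbb{Q}_1\times\mathbb{R}$ are fine, but the main step has a real gap. You define $\lusim{\rho}$ to record, for each $i<\kappa$, the set $L_i\cap P_i$, and then claim that by the $(2^{<\theta})^+$-c.c.\ there is a maximal antichain $\bar A$ and values $\rho_p\in V^{\mathbb{Q}_1}$ with $p\Vdash\lusim{\rho}=\rho_p$. This would require each $p\in\bar A$ to force $\lusim{\rho}$ equal to a \emph{ground-model} object. But $\rho$ is (essentially) a subset of $\bigcup_{i<\kappa}\{i\}\times P_i$, a set of size $\lambda_1+\lambda_2>2^{<\theta}\ge\theta$, and $\mathbb{R}=\Add(\theta,\mathcal{U}_2\setminus\mathcal{U}_1)$ certainly adds new subsets of sets of this size. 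So there is no reason whatsoever that any condition forces $\lusim{\rho}$ into $V^{\mathbb{Q}_1}$, and the ``at most $2^{<\theta}$ possible values'' claim is simply false for an object of this size. The chain condition only says that a name for an element of a fixed ground-model set $X$ is decided along an antichain of size $\le 2^{<\theta}$; it does not bound the number of possible values when $|X|$ is large, and it never manufactures ground-model values for genuinely new objects. Your absoluteness argument therefore has nothing to feed on.

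The paper's proof avoids this by never trying to pin down the whole filling at once. Instead, for each $\alpha<\lambda_1$ separately it chooses a condition $q_\alpha\le q_*$ and a set $A_\alpha\in D$ with $q_\alpha\Vdash\text{``}i\in A_\alpha\Rightarrow f^1_\alpha(i)<_{\lusim J}\lusim h(i)\text{''}$. This is a small amount of information (one condition, one element of $D$) per $\alpha$, and now one has $\lambda_1>2^{<\theta}$ many such pairs. A $\Delta$-system argument on the supports $\dom(q_\alpha)$ (or, in the general case, a refinement of it proved as Claims 5.5--5.6) produces an unbounded $S\subseteq\lambda_1$ on which the $q_\alpha$ are pairwise compatible outside a fixed root and $A_\alpha$ is constant. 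One then defines $g(i)$ in the ground model as the $<_I$-lub of a suitable set built from $\{f^1_\alpha(i):\alpha\in S\}$, and checks $f^1_\alpha<_D g<_D f^2_\beta$ by taking \emph{unions} of the compatible conditions $q_{\beta_i}$ to get a single condition forcing a contradiction with $\lusim h$. The combinatorial heart is the compatibility of $\lambda_1$-many conditions obtained from the $\Delta$-system, which is exactly what your antichain-of-size-$2^{<\theta}$ shortcut tried to replace and cannot.
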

\begin{proof}

Let $\mathbb{Q}=\mathbb{Q}_2 /\mathbb{Q}_1$. Then clearly  $\mathbb{Q}= Add(\theta, \mathcal{U}_2 \setminus \mathcal{U}_1),$ so we can assume without loss of generality that $\mathcal{U}_1=\emptyset,$ so that $\mathbb{Q}_1$ is the trivial forcing notion and $V=V^{\mathbb{Q}_1}$.

So $I, f^l_\a, D\in V$ are objects and not names. Also without loss of generality $\l_1 > 2^{<\theta}$, hence by Theorem 2.16, $\l_2 >2^{<\theta}$. Set $\mathcal{U}=\mathcal{U}_2= \mathcal{U}_2\setminus \mathcal{U}_1.$ We can also assume that the linear order $I$ mentioned in $`` \maltese$'' is a complete linear order whose set of elements is in $V$.

Toward contradiction assume that $\nVdash_{\mathbb{Q}_2} `` \maltese$.'' So we can find $q_*\in \mathbb{Q}_2$ and $\mathbb{Q}_2-$names $\lusim{J}$ and $\lusim{h}$ such that:

$\hspace{1.cm}$ $(\a)$ $q_* \in \mathbb{Q}_2,$

$\hspace{1.cm}$ $(\b)$ $q_* \Vdash `` \lusim{J}$ is a linear order such that $\lusim{J} \supseteq I$'',

$\hspace{1.cm}$ $(\gamma)$ $q_*\Vdash `` \lusim{h}\in$ $\lusim{J}^{\k}$'',

$\hspace{1.cm}$ $(\delta)$ $q_*\Vdash ``f^1_{\a_1} <_D \lusim{h} <_D f^2_{\a_2}$ for every $\a_1 < \l_1, \a_2 < \l_2$''.

{\bf Case 1. $(\forall \a <\l_1) |\a|^\k < \l_1$:} For each $\a <\l_1$ choose $(q_\a, A_\a)$ such that:

$\hspace{2.2cm}$$(a)$ $q_\a \leq q_*,$

$(*):$$\hspace{1.4cm}$ $(b)$  $A_\a\in D, A_\a \subseteq \k,$

$\hspace{2.2cm}$$(c)$ $q_\a \Vdash ``$ if $i\in A_\a,$ then $ f^1_\a(i) <_{\lusim{J}} \lusim{h}(i)$''.

\begin{claim}
There are $S, \mathcal{U}_*, p_*$ and $A_*$ such that:

$\hspace{1cm}$$(b)$ $S\subseteq \l_1$ is unbounded,

$\hspace{1cm}$$(b)$ If $\a\neq \b$ are from $S$, then $dom(q_\a) \cap dom(q_\b)=\mathcal{U}_*$,

$\hspace{1.cm}$$(c)$ $\a\in S \Rightarrow q_\a \upharpoonright \mathcal{U}_* =p_*,$

$\hspace{1.cm}$$(d)$ $\a\in S \Rightarrow A_\a=A_*.$
\end{claim}
\begin{proof}
By the assumption on $\l_1$ and the $\Delta-$system lemma, we can find $S_1, \mathcal{U}_*$ such that $S_1\subseteq \l_1$ is unbounded, and for all $\a\neq \b$  from $S_1$, $dom(q_\a) \cap dom(q_\b)=\mathcal{U}_*$. Since $|\mathcal{U}_* |<\theta,$ and  $\l_1 > 2^\k = |\{q\in \mathbb{Q}_2: dom(q)=\mathcal{U}_*    \}|,$  we can find an unbounded $S \subseteq S_1$, $p_*\in \mathbb{Q}_2$ and $A_*\in D$ such that the conditions of the claim are satisfied by them.
\end{proof}

For $i<\k$ let $I^*_i=\{f^1_\a(i): \a\in S \}\subseteq I.$ By enlarging $I$ if necessary, we can assume without loss of generality that  $|I| >\l_1$ (in $V$). Define $g\in$ $I^{\k}$ by
\begin{center}
$g(i)=$the $<_I-$least upper bound of $I^*_i.$
\end{center}
\begin{claim}
If $\a<\l_1,$ then $ f^1_\a <_D g.$
\end{claim}
\begin{proof}
Let $\b\in S, \b >\a.$ Then $f_\a <_D f_\b \leq_D g.$
\end{proof}
\begin{claim}
If $\a<\l_2,$ then $g <_D f^2_\a.$
\end{claim}
\begin{proof}
Let $\b\in (\a, \l_2)$ and let $B=\{i<\k: g(i)>_I f^2_\b(i) \}.$ If $B\notin D,$ then $g \leq_D f^2_\b <_D f^2_\a$ and we are done. So suppose that $B\in D.$ For each $i\in B$ there is $t_i\in I^*_i$ such that $ f^2_\b(i) <_I t_i.$ So there is $\sigma_i \in S$ such that $t_i=f^1_{\sigma_i}(i).$ Let $q = \bigcup\{p_{\sigma_i}: i\in B  \}.$ By Claim 5.2, $q$ is a well-defined function and so $q\in \mathbb{Q}_2.$ Further $i\in B \Rightarrow q \leq p_{\sigma_i}$ and by Claim 5.2
\begin{center}
$p_{\sigma_i}\Vdash ``A_*=\{j<\k:  f^1_{\sigma_i}(j) <_{\lusim{J}} \lusim{h}(j)$''$ \}.$
\end{center}
So $A_*\cap B\in D,$and
\begin{center}
$i\in A_*\cap B \Rightarrow q\Vdash ``  f^2_\b(i) \leq_{\lusim{J}}  t_i=f^1_{\sigma_i}(i) <_{\lusim{J}}  \lusim{h}(i)$'',
\end{center}
hence $q\Vdash  f^2_\b <_D \lusim{h}$'',
and we get a contradiction.
\end{proof}
It follows that $g\in V$ is such that for all $\a_1<\l_1$ and $\a_2 <\l_2,$ $f^1_{\a_1} <_D g <_D f^2_{\a_2}$ and we get a contradiction.

{\bf Case 2. The general case:} We now show how to remove the extra assumption $(\forall \a <\l_1) |\a|^\k < \l_1$ from the above proof. Let $\sigma=2^{<\theta}.$ Then $\sigma=\sigma^{<\theta}=\sigma^\k,$ as $\theta$ is regular. Let $\langle (q_\a, A_\a): \a<\l_1  \rangle$ be as in $(*)$.
\begin{claim}
There is $u_* \subseteq \mathcal{U}, |u_*|\leq \sigma$ such that if $u_* \subseteq u\in [\mathcal{U}]^{\leq \sigma}$ and $\a<\l_1,$ then for some $\b\in [\a, \l_1)$ we have $dom(q_\b)\cap u \subseteq u_*.$
\end{claim}
\begin{proof}
Suppose not. We define $(u_\xi, \beta_\xi),$ by induction on $\xi< \theta,$ such that:

$\hspace{1.cm}$$(\a)$ $u_\xi\in [\mathcal{U}]^{\leq \sigma},$

$\hspace{1.cm}$$(\b)$ $\langle u_\zeta: \zeta\leq \xi   \rangle$ is $\subseteq-$increasing and continuous,


$\hspace{1.cm}$$(\gamma)$ $\beta_\xi< \l_1$,

$\hspace{1.cm}$$(\delta)$ $\langle \b_\zeta: \zeta \leq \xi \rangle$ is increasing and continuous,

$\hspace{1.cm}$$(\epsilon)$ If $\xi=\zeta+1$ and $\a\in [\b_\zeta, \l),$ then $dom(p_\a)\cap u_\xi \nsubseteq u_\zeta.$

{\bf Case 1. $\xi=0$:} Let $(u_\xi, \b_\xi)=(\emptyset, 0).$

{\bf Case 2. $\xi$ is a limit ordinal:} Let $u_\xi= \bigcup_{\zeta<\xi}u_\zeta,$ and $\b_\xi=\bigcup_{\zeta<\xi}\b_\zeta.$ Then $|u_\xi|\leq \sigma$ and $\b_\xi <\l_1$ as $|\xi|<\theta=cf(\theta) < \l_1=cf(\l_1).$

{\bf Case 3. $\xi=\zeta+1$ is a successor ordinal:} By our assumption, there are $u, \a$ such that $u_\zeta \subseteq u\in [\mathcal{U}]^{\leq \sigma}, \a <\l_1$ and for all $\b\in [\a, \l_1), dom(q_\b)\cap u \nsubseteq u_\zeta.$ Set $u_\xi=u$ and $\b_\xi=max\{\b_\zeta +1, \a+1  \}.$

Now set $\b= \bigcup_{\xi<\theta}\b_\xi.$ Then $\b<\l_1$ as $\l_1=cf(\l_1)>\theta$ and for all $\xi<\theta, \b_\xi<\l_1.$ Also $\xi<\theta \Rightarrow dom(q_\b)\cap u_{\xi+1} \nsubseteq u_\xi.$ As $\langle u_\xi: \xi <\theta  \rangle$ is $\subseteq-$increasing, it follows that $|dom(q_\b)|\geq \theta,$ which is a contradiction.
\end{proof}
Fix $u_*$ as in Claim 5.5.
\begin{claim}
There are $p_*, S, A_*$ such that:

$\hspace{1.cm}$$(\a)$ $p_*\in \mathbb{Q}_2, p_*\leq q_*,$

$\hspace{1cm}$$(\b)$ $dom(p_*)\setminus dom(q_*) \subseteq u_*,$

$\hspace{1.cm}$$(\gamma)$ $S\subseteq \l_1$ is unbounded in $\l_1,$

$\hspace{1.cm}$$(\delta)$ If $\a\in S,$ then $A_\a=A_*$ and $q_\a \upharpoonright (dom(q_*)\cup u_*)=p_*,$

$\hspace{1.cm}$$(\epsilon)$ If $u \subseteq \mathcal{U}, |u|\leq\sigma$ and $\a<\l_1,$ then there is $\b$ such that $\a <\b\in S$ and $dom(q_\b)$

$\hspace{1.5cm}$is disjoint from $u\setminus dom(p_*).$
\end{claim}
\begin{proof}
Let $\langle (p_\xi, B_\xi): \xi <\xi_*   \rangle$ list $\{(p, B)\in \mathbb{Q}_2\times D: p\leq q_*$ and $dom(p)\setminus dom(q_*)\subseteq u_*  \}.$ As $|u_*|\leq \sigma,$ and members of $\mathbb{Q}_2$ are functions into $\{0,1\}$, clearly $|\xi_*|\leq \sigma^{<\theta}\times 2^\k=\sigma,$ so w.l.o.g $\xi_*\leq \sigma.$ Let
\begin{center}
$S_\xi=\{\a<\l_1: A_\a = B_\xi$ and $q_\a \upharpoonright  (dom(q_*)\cup u_*)=p_\xi    \}.$
\end{center}
So  $\langle S_\xi: \xi <\xi_*    \rangle$ is a partition of $\l_1.$ If for some $\xi, (p_\xi, S_\xi, B_\xi)$ is as required on $(p_*, S, A_*),$ we are done. Suppose otherwise. Clearly, for each $\xi<\xi_*,$ one of the following occurs:
\begin{itemize}
\item $S_\xi$ is bounded in $\l_1.$ Then let $\a_\xi=sup(S_\xi)+1$ and $u_\xi=\emptyset,$
\item $S_\xi$ is unbounded in $\l_1,$ then clause $(\epsilon)$ must fail. Let $u_\xi, \a_\xi$ witness the failure of  $(\epsilon)$.
\end{itemize}
 Let $u= \bigcup_{\xi<\xi_*}u_\xi \cup u_*$ and $\a= sup\{\a_\xi: \xi <\xi_* \}+1.$ Then $u\subseteq \mathcal{U}, |u|\leq \sigma$ and $\a<\l_1.$ By Claim 5.5, there is $\b\in (\a, \l_1)$ such that $dom(q_\b)\cap u \subseteq u_*.$ Pick $\xi<\xi_*$ such that $p_\xi=q_\b \upharpoonright (dom(q_*)\cup u_*)$ and $B_\xi=A_\b.$ So $\a <\b\in S_\xi$ and hence $S_\xi$ is unbounded in $\l_1.$ But then $\a_\xi <\b\in S$ and
\begin{center}
$dom(q_\b) \cap (u_\xi \setminus dom(p_\xi))=\emptyset,$
\end{center}
which is in contradiction with our choice of $u_\xi, \a_\xi.$
The claim follows.
\end{proof}
Fix $p_*, S$ and $A_*$ as above.
For $i<\k$ let $J_i$ be the set of all $t\in I$ such that if $u\subseteq\mathcal{U}, |u|\leq \sigma$ and $\a<\l_1,$ then there is $\b$ such that:

$\hspace{2.3cm}$$(a)$ $t\leq_I f^1_\b(i),$

$(**):$$\hspace{1.5cm}$$(b)$ $\a<\b\in S,$

$\hspace{2.3cm}$$(c)$ $dom(q_\b)$ is disjoint from $u\setminus dom(p_*).$

We also assume w.l.o.g that $I$ is  of cardinality $>\l_1$ and we define $g\in I^{\k}$ by
\begin{center}
$g(i)=$the $<_I-$least upper bound of $J_i.$
\end{center}
\begin{claim}
If $\a<\l_1,$ then $f^1_\a \leq_D g$.
\end{claim}
\begin{proof}
Let $B=\{i<\k: g(i)<_I f^1_\a(i)  \}.$ If $B\notin D,$ we get the desired conclusion, so assume that $B\in D.$ So for every $i\in B, f^1_\a(i)\notin J_i,$ hence there are $u_i\subseteq\mathcal{U}$ of size $\leq \sigma$ and $\a_i<\l_1$ such that there is no $\b$ as requested in $(**),$ for $t=f^1_\a(i).$ Let $u= \bigcup_{i\in B}u_i$ and $\a_*= \bigcup\{\a_i: i\in B \}\cup \a.$ The $u$ is a subset of $\mathcal{U}$ of size $\leq\sigma$, and by Claim 5.6 we can find $\b$ such that $\a_* <\b\in S$ and $dom(q_\b)$ is disjoint from $u\setminus dom(p_*).$ Now $\a\leq \a_* <\b,$ hence $f^1_\a <_D f^1_\b$, hence $C=\{i<\k: f^1_\a(i) <_I f^1_\b(i)\}\in D.$ Hence $B\cap C\in D,$ in particular $B\cap C\neq \emptyset.$ Let $i\in B\cap C.$ Then:

$\hspace{1.cm}$ $(\a)$ $f^1_\a(i) <_I f^1_\b(i),$ as $i\in C,$

$\hspace{1.cm}$ $(\b)$ $f^1_\a(i) \notin J_i,$ as $i\in B,$

$\hspace{1.cm}$ $(\gamma)$ $\a<\nu\in S$ and $dom(q_\nu)\cap u \subseteq dom(p_*) \Rightarrow f^1_\nu(i) <_I f^1_\a(i),$ as $u_i$ witnesses

$\hspace{1.4cm}$ $f^1_\a(i) \notin J_i.$

In particular, as $\b$ satisfies $(\gamma),$  we have $f^1_\b(i) <_I f^1_\a(i).$ But this is in contradiction with $(\a).$
\end{proof}
\begin{claim}
If $\a<\l_2,$ then $g \leq_D f^2_\a.$
\end{claim}
\begin{proof}
Let $B=\{i<\k: f^2_\a(i) <_I g(i)  \}.$ If $B\notin D$ we are done, so assume toward contradiction that $B\in D.$
First note that for $i\in B, f^2_\a(i)\in J_i$. To see this, suppose $u  \subseteq \mathcal{U}$ is of size $\leq \sigma$
and $\alpha < \lambda_1$.
As $i \in B, f^2_\a(i) <_I g(i),$ so by the definition of $g$, we can find $t' \in J_i$ with $f^2_\a(i) \leq_I t'.$ Let $\beta$ witness $t' \in J_i$ with respect to $u$
and $\alpha$.
Then $f^2_\a(i) \leq_I t' \leq_I f^1_\beta(i)$ and both $(b)$ and $(c)$  of  $(**)$ are satisfied for this $\beta.$ Thus $\beta$ witnesses $(**)$
with respect to $t=f^2_\a(i).$ It follows that $f^2_\a(i) \in J_i.$

 Let $p_1\in \mathbb{Q}_2$ and $C\subseteq D$ be such that $p_1\leq p_*$  and $p_1 \Vdash C=\{i<\k: \lusim{h}(i) \leq_{\lusim{J}} f^2_{\a+1}(i) <_{\lusim{J}} f^2_\a(i)\}.$  Clearly $C\in D.$

We define $\b_i$ by induction on $i\in B$ such that:

$\hspace{1.cm}$$(\a)$ $\a< \b_i\in S,$

$\hspace{1.cm}$$(\b)$ $f^2_\a(i) \leq_I f^1_{\b_i}(i),$

$\hspace{1.cm}$$(\gamma)$ $dom(q_{\b_i}) \cap (\bigcup\{dom(q_{\b_j}): j\in B\cap i \}\cup dom(p_1))\subseteq dom(p_*).$

{\bf Case 1. $i=min(B):$} Let $\b_i$ be the least element of $S$ above $\a$ such that $f^2_\a(i) \leq_I f^1_{\b_i}(i).$ Such $\b_i$ exists by definition of $g.$

{\bf Case 2. $i > min(B):$} Suppose $\b_j$ for $j\in B\cap i$ are defined. Let $u=\bigcup\{dom(q_{\b_j}): j\in B\cap i \}\cup dom(p_1).$ Then $u\subseteq \mathcal{U}$ and $|u|\leq \sigma,$ so by definition of $J_i$ and $g$, we can find $\b\in S$ such that $\b > \bigcup_{j\in B\cap i}\b_j$, $f^2_\a(i) \leq_I f^1_{\b}(i),$ and $dom(q_\b)$ is disjoint from $u\setminus dom(p_*).$ Set $\b_i=\b.$

Let $q=\bigcup\{q_{\b_i}: i\in B \}\cup p_1.$ By $(\gamma),$ $q$ is a well-defined function, so
 $q\in \mathbb{Q}_2$. Clearly $q\leq p_1,$ and  $q\leq q_{\b_i},$ for $i\in B.$

As $\b_i\in S,$  $A_{\b_i}=A_*\in D$ (by Claim 5.6$(\delta)$), hence  $A_*\cap B\cap C\in D.$ Let $i\in A_*\cap B \cap C.$ Then:

$\hspace{1.cm}$$(\delta)$ $q\Vdash `` \lusim{h}(i) \leq_{\lusim{J}} f^2_{\a+1}(i) <_{\lusim{J}} f^2_\a(i)$'', as $q\leq p_1$ and $i\in C$,

$\hspace{1.cm}$$(\epsilon)$ $f^2_\a(i) \leq_I f^1_{\b_i}(i),$ by $(\b)$ above,

$\hspace{1.cm}$$(\zeta)$ $q\Vdash `` f^1_{\b_i}(i) \leq_{\lusim{J}} \lusim{h}(i)$'', as $q\leq p_{\b_i}$ and $i\in A_*=A_{\b_i}.$

It follows that
\begin{center}
 $q\Vdash `` f^1_{\b_i}(i)\leq_{\lusim{J}} \lusim{h}(i) <_{\lusim{J}} f^2_\a(i) \leq_{\lusim{J}} f^1_{\b_i}(i)$'',
\end{center}
which is a contradiction.
\end{proof}
\begin{claim}
If $\a_1 <\l_1$ and $\a_2 < \l_2,$ then $ f^1_{\a_1} <_D g <_D f^2_{\a_2}.$
\end{claim}
\begin{proof}
We have $f^1_{\a_1} <_D f^1_{\a_1+1} \leq_D g,$ and $g \leq_D f^2_{\a_2+1} <_D f^2_{\a_2},$ and so we are done.
\end{proof}

Thus $g\in V$ is such that for all $\a_1<\l_1$ and $\a_2 <\l_2,$ $f^1_{\a_1} <_D g <_D f^2_{\a_2}$ and we get a contradiction.
Lemma 5.1 follows.
\end{proof}
\begin{theorem}
Assume $\k < \theta=\theta^{<\theta}< \mu$ and $\mu$ is a supercompact cardinal. Let $\mathbb{Q}=Add(\theta,\mu).$ Then in $V^{\mathbb{Q}},$ we have $2^\theta=\mu, \theta^{<\theta}=\theta >\k$ and for every ultrafilter $D$ on $\k,$ if $(\l_1, \l_2)\in \mathscr{C}(D),$ then $\l_1+\l_2 <\mu.$
\end{theorem}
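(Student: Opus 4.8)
The plan is to combine the supercompactness of $\mu$ (which handles cardinals $\ge \mu$ via Theorem 2.11 or 2.16) with Lemma 5.1 (which propagates a small-cut statement from an initial part of the Cohen forcing to the whole of it, thereby giving a reflection argument) in the style of a Laver-type indestructibility/reflection argument. First I would work in $V^{\mathbb{Q}}$, where $\mathbb{Q}=Add(\theta,\mu)$; since $\mathbb{Q}$ is $\theta$-closed and has the $(2^{<\theta})^+$-c.c. and $\theta=\theta^{<\theta}$, it preserves cardinals, forces $2^\theta=\mu$, and preserves $\theta^{<\theta}=\theta>\kappa$. So the only content is: if $D$ is an ultrafilter on $\kappa$ in $V^{\mathbb{Q}}$ and $(\lambda_1,\lambda_2)\in\mathscr{C}(D)$, then $\lambda_1+\lambda_2<\mu$. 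Suppose not; by symmetry of $\mathscr{C}(D)$ we may assume $\lambda_2\ge\mu$, and by Theorem 2.16 (applied in $V^{\mathbb{Q}}$, noting $\theta^\kappa=\theta<\mu\le\lambda_2$ whenever $\lambda_1<\lambda_2$, etc.) we may in fact assume $\lambda_1\ge\mu$ as well, or at least reduce to that case. So assume $\lambda_1,\lambda_2\ge\mu$ are regular and witnessed by $\langle\langle f^1_\alpha/D:\alpha<\lambda_1\rangle,\langle f^2_\alpha/D:\alpha<\lambda_2\rangle\rangle$ in $I^\kappa/D$ for a $(\lambda_1+\lambda_2)^+$-saturated dense linear order $I$ (or its completion, via Lemma 2.1), with the cut remaining a pre-cut in every extension $J\supseteq I$.

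Next I would use the product structure of $\mathbb{Q}=Add(\theta,\mu)$: write $\mathbb{Q}=\mathbb{Q}_1\times\mathbb{Q}$ where $\mathbb{Q}_1=Add(\theta,\mathcal{U}_1)$ for a suitable small index set $\mathcal{U}_1\subseteq\mu$ (by the $(2^{<\theta})^+$-c.c. and $|I^\kappa/D\text{-machinery}|$ considerations, all the relevant names for $I$, $D$, and the sequences $\bar f^1,\bar f^2$ can be taken to live in a subforcing $\mathbb{Q}_1=Add(\theta,\mathcal{U}_1)$ with $|\mathcal{U}_1|<\mu$; here I use that the cut, being witnessed by $\lambda_1+\lambda_2$ many functions each of size $\kappa$ with values in a linear order, together with $D$ of size $2^\kappa$, needs only $\le\lambda_1+\lambda_2$ coordinates, but we want $<\mu$ coordinates, which requires a genuine reflection — this is where supercompactness enters). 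Concretely, let $U$ be a normal measure on $P_\mu(H(\chi))$ for large $\chi$ and $j:V\to M$ the ultrapower embedding with $\mathrm{crit}(j)=\mu$ and $M^{<\mu}\subseteq M$ (or $M^\eta\subseteq M$ for $\eta=(\lambda_1+\lambda_2)^{<\theta}$ as appropriate, if $\lambda_1,\lambda_2$ are not too large — but since they can be arbitrarily large we instead argue against a minimal counterexample). Take $\lambda_1+\lambda_2$ minimal such that a counterexample exists; then by elementarity and the closure of $M$, $j(\mathbb{Q})$ factors as $\mathbb{Q}\times\mathbb{Q}'$ with $\mathbb{Q}'=Add(\theta, j(\mu)\setminus\text{(image part)})$, and the counterexample reflects below $\mu$ in $M$ unless... — the cleaner route is to apply Lemma 5.1 directly.

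The cleanest implementation: pick the counterexample with $\lambda_1+\lambda_2$ minimal, and note that the names for $I,D,\bar f^1,\bar f^2$ and the "$\maltese$'' statement are decided by an antichain, hence determined by a set $\mathcal{U}_1\subseteq\mu$ of size $<\mu$ (using $\theta^{<\theta}=\theta$ and the c.c.). Let $\mathcal{U}_2=\mu$, so $\mathbb{Q}_1=Add(\theta,\mathcal{U}_1)\lessdot Add(\theta,\mu)=\mathbb{Q}_2=\mathbb{Q}$. By Lemma 5.1 (all of whose hypotheses $(\alpha)$–$(\epsilon)$ hold: $\kappa<\theta=\mathrm{cf}(\theta)$, $\lambda_1+\lambda_2>\mu>2^{<\theta}$, and $\maltese$ is forced by $\mathbb{Q}_1$ since the cut survives in every extension), we conclude $\Vdash_{\mathbb{Q}_2}\maltese$, i.e. the cut of cofinality $(\lambda_1,\lambda_2)$ persists in $V^{\mathbb{Q}}$ — but that is just consistent with our assumption, not yet a contradiction. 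So in fact Lemma 5.1 is used the other way: it shows that the property of being a counterexample is already decided by $\mathbb{Q}_1$, and then one lifts a supercompactness embedding $j$ through $\mathbb{Q}_1$ (since $|\mathbb{Q}_1|<\mu=\mathrm{crit}(j)$, $j$ lifts trivially to $j:V^{\mathbb{Q}_1}\to M^{\mathbb{Q}_1}$), so in $M^{\mathbb{Q}_1}$ we have $j(I),j(D)$ and a cut of cofinality $(\lambda_1,\lambda_2)$ with $\lambda_1,\lambda_2<j(\mu)$, and $j$ applied to the functions gives, exactly as in the proof of Theorem 2.11, a filling of the cut in $j(I)^\kappa/j(D)$ (using $\sup\{j(\gamma):\gamma<\lambda_2\}<j(\lambda_2)$ and $M^{\text{large}}\subseteq M$ to see the original sequences lie in $M$), contradicting that it is a pre-cut in every linear order extending $I$. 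The main obstacle is the bookkeeping in the factorization $j(\mathbb{Q})=\mathbb{Q}_1*\mathbb{Q}_{\mathrm{tail}}$ and checking that $M$'s closure is enough to carry $\bar f^1,\bar f^2$ and thus to run the Theorem 2.11 argument inside $M^{\mathbb{Q}_1}$; this is precisely why Lemma 5.1 is proved first (to reduce to names over the small forcing $\mathbb{Q}_1$, making the embedding lift automatic), and it is flagged in the excerpt that "the methods of the proof of Theorem 2.11 are not applicable" directly to $Add(\theta,\mu)$, so the detour through Lemma 5.1 is essential. I expect the hardest step to be verifying hypothesis $(\epsilon)$ of Lemma 5.1 — that $\maltese$ is genuinely forced by $\mathbb{Q}_1$ and not merely by $\mathbb{Q}_2$ — which requires checking that the saturation of $I$ and the "pre-cut in every extension'' clause are absolute enough between $V^{\mathbb{Q}_1}$ and $V^{\mathbb{Q}_2}$, using that $\mathbb{Q}_2/\mathbb{Q}_1$ is again $\theta$-closed and $\theta>\kappa$ so it adds no new functions $\kappa\to\mathrm{Ord}$ of the relevant kind.
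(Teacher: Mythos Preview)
Your proposal has the right ingredients but applies Lemma~5.1 in the wrong direction, and this is a genuine gap.

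You try to find a \emph{small} $\mathcal{U}_1\subseteq\mu$ with $|\mathcal{U}_1|<\mu$ so that all the relevant names live over $\mathbb{Q}_1=Add(\theta,\mathcal{U}_1)$, and then lift $j$ trivially through this small forcing. But this reduction is impossible: the sequences $\bar f^1,\bar f^2$ have length $\lambda_1+\lambda_2\ge\mu$, and the order $<_I$ on a $(\lambda_1+\lambda_2)^+$-saturated linear order is a name of size $\ge\mu$ as well. No chain-condition or minimality argument will push the whole counterexample below $\mu$ coordinates. Your claim that ``the names \dots are decided by an antichain, hence determined by a set $\mathcal{U}_1\subseteq\mu$ of size $<\mu$'' is simply false here. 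You also misread Lemma~5.1 when you say it ``shows that the property of being a counterexample is already decided by $\mathbb{Q}_1$'': the lemma only goes upward, from $\mathbb{Q}_1$ to $\mathbb{Q}_2$.

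The paper's use of Lemma~5.1 is the reverse of what you attempt. One first arranges (by passing to $V[G\cap Add(\theta,\eta)]$ for some $\eta<\mu$, which preserves supercompactness of $\mu$) that $D\in V$; this is possible because $|D|\le 2^\kappa\le\theta<\mu$, and this is the \emph{only} small-subforcing step. Then one takes $\mathbb{Q}_1=\mathbb{Q}=Add(\theta,\mu)$ itself and $\mathbb{Q}_2=j(\mathbb{Q})=Add(\theta,j(\mu))$, where $j:V\to M$ witnesses $\lambda$-supercompactness. Since $\Vdash_{\mathbb{Q}_1}\maltese$ by assumption, Lemma~5.1 gives $\Vdash_{\mathbb{Q}_2}\maltese$ in $V$, hence in $M$ (by closure of $M$). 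Elementarity gives $M\models\Vdash_{\mathbb{Q}_2}j(\maltese)$. Now both $\maltese$ and $j(\maltese)$ are forced by $\mathbb{Q}_2$ in $M$: pick $\delta$ with $\sup j[\lambda_1]<\delta<j(\lambda_1)$; by $j(\maltese)$, $j(\bar f^1)_\delta$ sits strictly between all $j(f^1_\alpha)$ and $j(f^2_\gamma)$ in $j(I)^\kappa/D$, which fills the cut in the extension $j(I)\supseteq j[I]\cong I$, contradicting $\maltese$. There is no lifting of $j$ to a generic extension at all---the entire argument takes place at the level of forcing statements in $V$ and $M$.

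So the ``hardest step'' you flag (verifying $\Vdash_{\mathbb{Q}_1}\maltese$) is in fact immediate once $\mathbb{Q}_1$ is taken to be the full $\mathbb{Q}$; the real content of Lemma~5.1 is that the pre-cut survives the further Cohen forcing $Add(\theta,j(\mu)\setminus\mu)$, which is exactly what is needed to compare $\maltese$ with $j(\maltese)$ over the same poset $j(\mathbb{Q})$.
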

\begin{proof}
Let $G$ be $\mathbb{Q}-$generic over $V$. Toward contradiction assume that in $V[G],$ there are ultrafilter $D$ on $\k$ and regular cardinals $\l_1, \l_2$ such that $\l_1+\l_2 \geq \mu$ and $(\l_1, \l_2)\in \mathscr{C}(D).$ Assume w.l.o.g that $\l_2\geq \l_1.$ Let $\l=\l_1+\l_2$, and let $I$ be a $\l^+-$saturated dense linear order and let $(\bar{f}^1/D, \bar{f}^2/D)$ witness a pre-cut of $I^\k/D$ of cofinality $(\l_1, \l_2)$, where $\bar{f}^l/D= \langle f^l_\a/D: \a <\l_l \rangle, l=1,2.$
We may assume that the set of elements of $I$ is $|I|,$ so that it belongs to $V.$ It follows that $I^\k \subseteq V,$ and  $f^l_\a \in V$.
\begin{claim}
We can assume that $D\in V.$
\end{claim}
\begin{proof}
Let $\eta < \mu$ be such that $D\in V[G\cap Add(\theta, \eta)].$ Then  $V[G]$ is a generic extension of $V[G\cap Add(\theta, \eta)]$ by $Add(\theta, \mu\setminus\eta),$ and $Add(\theta, \mu\setminus\eta) \simeq Add(\theta, \mu).$ So by replacing $V$ by $V[G\cap Add(\theta, \eta)],$ if necessary, we can assume that $D\in V.$
\end{proof}
By our assumption, we have $\Vdash_{\mathbb{Q}} ``\maltese$'', where

$\hspace{2.cm}$ ``$\lusim{I}$ is a linear order, $D$ is an ultrafilter on $\k$ and $(\lusim{\bar{f}}^1, \lusim{\bar{f}}^2)$

$(\maltese):$$\hspace{1.3cm}$ represents a $(\l_1, \l_2)$-pre-cut in $\lusim{I}^\k /D$ which is a pre-cut

$\hspace{2.cm}$ in $J^\k/D$ for each linear order  $J \supseteq \lusim{I}$'',

and $\lusim{I}, \lusim{\bar{f}^l}, l=1,2$ represent $\mathbb{Q}-$names for $I, \bar{f^l}, l=1,2$ respectively.

Let $j: V \rightarrow M$ be an elementray embedding, witnessing the $\l-$supercompactness of $\mu,$ so that $crit(j)=\mu, M^\l \subseteq M$ and  $\{j(\a): \a<\l_2\}$ is bounded in $j(\l_2).$ Clearly $j$ is the identity on $H(\mu),$ hence $j(\k)=\k, j(\theta)=\theta$ and $j(D)=D.$

Let $\mathbb{Q}_1=\mathbb{Q}$ and $\mathbb{Q}_2=j(\mathbb{Q}).$ Then $M\models$``$\mathbb{Q}_2=Add(\theta, j(\mu))$'',  hence $V \models$``$\mathbb{Q}_2=Add(\theta, j(\mu))$''.
It follows from Lemma 5.1 that  $\Vdash_{\mathbb{Q}_2}$``$\maltese$'', and hence

$(*)$ $\hspace{4.7cm}$ $M\models \Vdash_{\mathbb{Q}_2}$``$\maltese$''.

On the other hand, since
 \begin{center}
 $V\models\Vdash_{\mathbb{Q}_1}``\maltese$'',
 \end{center}
and since $j$ is an elementary embedding, we have

$(**)$ $\hspace{4.7cm}$ $M\models$``$ \Vdash_{\mathbb{Q}_2}``j(\maltese)$'',

where

$\hspace{1.9cm}$ ``$\lusim{j(I)}$ is a linear order, $D$ is an ultrafilter on $\k$ and $(j(\lusim{\bar{f}}^1)/D, j(\lusim{\bar{f}}^2)/D)$

$(j(\maltese)):$$\hspace{.9cm}$ represents a $(j(\l_1), j(\l_2))$-pre-cut in $\lusim{j(I)}^\k /D$ which is a pre-cut in

$\hspace{2.1cm}$ $J^\k/D$ for each linear order  $J \supseteq \lusim{j(I)}$''.

Assume for example $\l_1 \geq \l_2$ and pick $\delta$ such that $\sup\{j(\a): \a < \l_1\} < \delta < j(\l_1).$ Then $\Vdash_{\mathbb{Q}_2}$``$j(\lusim{\bar{f}}^1)_{\delta} \in j(\lusim{I})^\k$'' and for any $\a < \l_1$
and $\gamma < \l_2$,  by 
$(**),$ it is forced by $\MQB_2$  that
\[
j(\lusim{\bar{f}}^1(\a))=j(\lusim{\bar{f}}^1)_{j(\a)} <_{D} j(\lusim{\bar{f}}^1)_{\delta} <_D j(\lusim{\bar{f}}^2)_{j(\gamma)} = j(\lusim{\bar{f}}^2(\gamma)).
\]
By elementarity, we can find $h$ such that for all  $\a < \l_1$
and $\gamma < \l_2$, it is forced that 
\[
\lusim{\bar{f}}^1(\a) <_D h <_D \lusim{\bar{f}}^2(\gamma),
\]
which contradicts $(\maltese).$
\end{proof}

We now give a global version of Theorem 5.10. For this, we need the following generalization of Lemma 5.1.

\begin{lemma}
Suppose that:

$\hspace{1.cm}$$(\a)$ $V_0 \models \k <\theta=cf(\theta),$

$\hspace{1.cm}$$(\b)$ $V_0 \models \l_1, \l_2$ are regular cardinals and $\l_1 + \l_2 > 2^{<\theta},$

$\hspace{1.cm}$$(\gamma)$   $\mathbb{P}\in V_0$ is a $\theta-c.c.$ forcing notion of size $\leq \sigma=2^{<\theta},$

$\hspace{1.cm}$$(\delta)$ $V_1=V_0^{\mathbb{P}},$

$\hspace{1.cm}$$(\epsilon)$ $\mathbb{Q}_l= Add(\theta, \mathcal{U}_l)_{V_0}, l=1,2,$ where $\mathcal{U}_1 \subseteq \mathcal{U}_2$ (hence $\mathbb{Q}_1 \lessdot \mathbb{Q}_2$ in $V_0$),

$\hspace{1.cm}$$(\zeta):$ $\mathbb{R}_l=\mathbb{P}\times \mathbb{Q}_l, l=1,2$ (hence $\mathbb{R}_1 \lessdot \mathbb{R}_2$ in $V_0),$

$\hspace{1.cm}$$(\eta)$  $\lusim{\bar{f}}^l = \langle \lusim{f}^l_\a: \a <\l_l \rangle, l=1,2$ where $\lusim{f}^l_\a$ is an $\mathbb{R}_l-$name for a function from $\k$  to

$\hspace{1.5cm}$$\lusim{I}.$

$\hspace{1.cm}$$(\theta)$ $ \Vdash_{\mathbb{R}_1} ``\maltese$'', where $\maltese$ is as in Lemma 5.1, but the names are $\mathbb{R}_1-$names (and

$\hspace{1.5cm}$hence $\mathbb{R}_2-$names ) here.

Then $ \Vdash_{\mathbb{R}_2} ``\maltese$''.
\end{lemma}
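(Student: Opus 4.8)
The plan is to imitate the proof of Lemma 5.1, with the forcings $\MRB_l=\MPB\times\MQB_l$ playing the role of the $\MQB_l$ there; the extra factor $\MPB$ is harmless because it is small and has a strong chain condition. As in Lemma 5.1, I would first reduce to the case $\mathcal U_1=\emptyset$, absorbing $\MQB_1$ into the ground model (since $Add(\theta,\mathcal U_1)$ is $\theta$-closed, $\MPB$ remains $\theta$-c.c.\ of size $\le\sigma:=2^{<\theta}$ over the new ground model). Then $\MRB_1=\MPB$ and $\MRB_2=\MRB_1\times\MQB$ with $\MQB=Add(\theta,\mathcal U_2)_{V_0}$, and since this is a genuine product it suffices to fix an $\MRB_1$-generic $G$ over $V_0$, put $V_1=V_0[G]$, and show $V_1\models{}\Vdash_{\MQB}$``$\maltese$''. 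By hypothesis $(\theta)$, $\maltese$ holds in $V_1$, so its witnesses become objects $I,\bar f^1,\bar f^2,D\in V_1$; as in Lemma 5.1 we may take $I$ to be a complete dense linear order whose set of points is an ordinal, so that $I,\bar f^l\in V_1$ and $I^\k\subseteq V_1$.

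Next I would transfer to $V_1$ the facts that Lemma 5.1's argument uses. In $V_0$ one has $\sigma=\sigma^{<\theta}=\sigma^\k$ because $\theta=\cf(\theta)$ and $\k<\theta$. Since $\MPB$ is $\theta$-c.c.\ of size $\le\sigma$, in $V_1$ the cardinal $\theta$ is still regular, $\sigma$ together with the equalities $\sigma^{<\theta}=\sigma^\k=\sigma$ is unchanged, and $\l_1,\l_2$ remain regular (all this by the $\theta$-c.c.\ of $\MPB$); and, exactly as in Lemma 5.1, Theorem 2.16 together with $\l_1+\l_2>\sigma$ lets us assume $\l_1,\l_2>\sigma$. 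The crucial point is that $\MQB=Add(\theta,\mathcal U_2)_{V_0}$ is $\theta$-closed in $V_0$ while $\MPB$ is $\theta$-c.c., so by Easton's lemma $\MQB$ is $<\theta$-distributive in $V_1$; as $\k<\theta$, this means that in $V_1$, below any condition of $\MQB$, every $\MQB$-name for a subset of $\k$ is decided outright, the decided set lying in $V_1$.

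With these in hand I would run the general case of Lemma 5.1's argument inside $V_1$, with $\MQB$ in the role of $\MQB_2$. Assuming towards a contradiction that $V_1\models{}\nVdash_{\MQB}$``$\maltese$'', fix $q_0\in\MQB$ and $\MQB$-names $\lusim{J}\supseteq I$, $\lusim{h}\in\lusim{J}^\k$ with $q_0\Vdash$``$f^1_{\a_1}<_D\lusim{h}<_D f^2_{\a_2}$'' for all $\a_1<\l_1,\a_2<\l_2$. The distributivity fact just noted supplies the pairs $(q_\a,A_\a)$, $\a<\l_1$, of Lemma 5.1's $(*)$ and, later, the condition $p_1$ deciding the set $C$ in the Case-2 claim ``$g\le_D f^2_\a$'' --- these being the only appeals there to $\theta$-closure of $\MQB_2$. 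The $\Delta$-system and counting arguments (Claims 5.5 and 5.6) go through word for word, all their bounds being $\le\sigma<\l_1=\cf(\l_1)$, and the function $g\in I^\k$ is built, as there, as a pointwise $<_I$-supremum, which is legitimate in $V_1$ since $I$ is complete. The one change is in the amalgamation at the end of that claim: instead of gluing a $\k$-indexed family of conditions (whose union need not lie in $\MQB$, i.e.\ in $V_0$), it suffices to pick a single $i\in A_*\cap B\cap C$ and a single $\beta\in S$ with $f^2_\a(i)\le_I f^1_\beta(i)$ and $\dom(q_\beta)\cap\dom(p_1)\subseteq\dom(p_*)$, so that $q_\beta\cup p_1\in\MQB$ and $q_\beta\cup p_1\Vdash$``$f^1_\beta(i)\le_{\lusim{J}}\lusim{h}(i)<_{\lusim{J}}f^2_\a(i)\le_{\lusim{J}}f^1_\beta(i)$'', a contradiction. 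This produces $g\in I^\k\cap V_1$ with $f^1_{\a_1}<_D g<_D f^2_{\a_2}$ for all $\a_1,\a_2$, filling the pre-cut of $I^\k/D$ in $V_1$ and contradicting $\maltese$.

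The main obstacle is exactly this interplay of the two factors of $\MRB_l$ once $\MPB$ has been absorbed: the Cohen forcing $\MQB_2$, computed in $V_0$, is no longer $\theta$-closed over $V_1=V_0^{\MPB}$, so every step of Lemma 5.1 that decides a $\k$-sized object, or amalgamates conditions along a construction of length $\k$, has to be revisited. The former is rescued by Easton's lemma ($\theta$-c.c.\ of $\MPB$ against $\theta$-closure of $\MQB_2$), and the latter by the localization just described; the rest is bookkeeping, once one has checked that $\MPB$, being $\theta$-c.c.\ of size $\le 2^{<\theta}$, leaves the arithmetic $\sigma=2^{<\theta}=\sigma^{<\theta}=\sigma^\k$ and the regularity of $\theta,\l_1,\l_2$ undisturbed.
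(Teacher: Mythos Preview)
Your proposal is correct and follows the same overall strategy as the paper: reduce to $\mathcal U_1=\emptyset$, work in $V_1=V_0^{\MPB}$, and rerun the general case of Lemma~5.1 against $\MQB_2=Add(\theta,\mathcal U_2)_{V_0}$, replacing the $\theta$-closure of $\MQB_2$ (lost over $V_1$) by $<\theta$-distributivity via Easton's lemma.

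There are two genuine technical differences worth noting. First, the paper takes extra care to keep certain objects inside $V_0$: it arranges (Claim~5.13) that the sequence $\langle q_\alpha:\alpha<\l_1\rangle$ lies in $V_0$, and then proves the analogue of Claim~5.5 (Claim~5.14) by approximating the failure witnesses by $V_0$-functions $G_1,G_2$ using the $\theta$-c.c.\ of $\MPB$. You instead verify once that $\MPB$ preserves the regularity of $\theta,\l_1,\l_2$ and the arithmetic $\sigma=2^{<\theta}=\sigma^{<\theta}=\sigma^\k$, and then run Claims~5.5 and~5.6 verbatim inside $V_1$; this is legitimate and shorter. Second, for the analogue of Claim~5.8 the paper (Claim~5.19) keeps the full $\k$-indexed amalgamation $q=\bigcup_{i\in B}q_{\beta_i}\cup p_1$ and argues that choosing each $\beta_i$ minimal forces $q\in V_0$, hence $q\in\MQB_2$. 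Your single-$i$ localization---pick one $i\in A_*\cap B\cap C$ and one $\beta\in S$ with $\dom(q_\beta)\cap\dom(p_1)\subseteq\dom(p_*)$, so that $q_\beta\cup p_1$ is automatically a condition in $V_0$---is cleaner and sidesteps the issue entirely; it works because the contradiction at the end of Claim~5.8 only uses $q\leq p_1$ and $q\leq q_{\beta_i}$ for that one coordinate $i$.
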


\begin{proof}
We repeat the proof of Lemma 5.1, with some changes. We usually work in $V_1=V_0^{\mathbb{P}},$ but sometimes go back to $V_0$. W.l.o.g. $I\in V_1$ is a complete linear order, whose set of elements is in $V_0$.
Also without loss of generality $\l_1 > 2^{<\theta}$, hence by Theorem 2.16, $\l_2 >2^{<\theta}$. We assume for simplicity that $\mathcal{U}_1=\emptyset,$ so that $\mathbb{Q}_1$ is the trivial forcing notion (see also the proof of Lemma 5.1). Set $\mathcal{U}=\mathcal{U}_2\setminus \mathcal{U}_1=\mathcal{U}_2.$

Since $\mathbb{Q}_2$ is $\theta-$closed and $\theta>\k,$ we have (in $V_1^{\mathbb{Q}_2}$) $I^\k \subseteq V_1,$ and  $D, f^l_\a \in V_1$ (for $l=1,2, \a<\l_l$).

Toward contradiction assume that $\nVdash_{\mathbb{R}_2} `` \maltese$.'' So (working in $V_1$) we can find $q_*\in \mathbb{Q}_2$ and $\mathbb{Q}_2-$names $\lusim{J}$ and $\lusim{h}$ such that:

$\hspace{1.cm}$ $(\a)$ $q_* \in \mathbb{Q}_2,$

$\hspace{1.cm}$ $(\b)$ $q_* \Vdash `` \lusim{J}$ is a linear order such that $\lusim{J} \supseteq I$'',

$\hspace{1.cm}$ $(\gamma)$ $q_*\Vdash `` \lusim{h}\in$ $\lusim{J}^{\k}$'',

$\hspace{1.cm}$ $(\delta)$ $q_*\Vdash ``f^1_{\a_1} <_D \lusim{h} <_D f^2_{\a_2}$ for every $\a_1 < \l_1, \a_2 < \l_2$''.

For each $\a <\l_1$ choose $(q_\a, A_\a)$ such that:

$\hspace{2.2cm}$$(a)$ $q_\a \leq q_*,$

$(*):$$\hspace{1.4cm}$ $(b)$  $A_\a\in D, A_\a \subseteq \k,$

$\hspace{2.2cm}$$(c)$ $q_\a \Vdash ``$ if $i\in A_\a,$ then $ f^1_\a(i) <_{\lusim{J}} \lusim{h}(i)$''.

\begin{claim}
W.l.o.g, $\langle q_\a: \a<\l_1 \rangle\in V_0.$
\end{claim}
\begin{proof}
For each $\a<\l_1,$ we have $q_\a\in \mathbb{Q}_2\in V_0$. It follows that $\langle q_\a: \a<\l_1 \rangle$ is a sequence of elements of $V_0$. Since $\l_1=cf(\l_1) \geq |\mathbb{P}|^+,$ there exists $S\in V_0$ such that $S$ is an unbounded subset of $\l_1$ and such that $\langle q_\a: \a\in S \rangle\in V_0.$ Rearranging this sequence we get $\langle q_\a: \a<\l_1 \rangle\in V_0.$

\end{proof}

\begin{claim} (In $V_1$)
There is $u_* \subseteq \mathcal{U}, |u_*|\leq \sigma$ such that if $u_* \subseteq u\in [\mathcal{U}]^{\leq \sigma}$ and $\a<\l_1,$ then for some $\b\in [\a, \l_1)$ we have $dom(q_\b)\cap u \subseteq u_*.$
\end{claim}
\begin{proof}
Suppose not. Thus for any $u \subseteq [\mathcal{U}]^{\leq \sigma}$ we can find $u \subseteq u' \in [\mathcal{U}]^{\leq \sigma}$ and $\a' < \l_1$
  such that there is no $\b\in [\a', \l_1)$ with $dom(q_\b)\cap u' \subseteq u.$ Note that the ordinal $\a'$ can be taken to be larger than 
   any given ordinal, so in fact, for any  $u \subseteq [\mathcal{U}]^{\leq \sigma}$
   and any $\a < \l_1$, there are  $u \subseteq u' \in [\mathcal{U}]^{\leq \sigma}$ and $\a < \a' < \l_1$
  such that there is no $\b\in [\a', \l_1)$ with $dom(q_\b)\cap u' \subseteq u.$ 
   
   So  in $V_0$, there are $p\in \mathbb{P}$ and $\mathbb{P}-$names $\lusim{F}_1, \lusim{F}_2$ such that $p\Vdash ``$ if $u\in [\mathcal{U}]^{\leq \sigma}$ and $\a<\l_1,$ then $u \subseteq \lusim{F}_1(u,\a)\in [\mathcal{U}]^{\leq \sigma}, \lusim{F}_2(u,\a)\in (\a, \l_1)$ and there is no $\b\in [\lusim{F}_2(u,\a), \l_1)$ such that $dom(q_\b)\cap \lusim{F}_1(u,\a) \subseteq u$''.

As $\mathbb{P}$ has $\theta-c.c.$, $\theta\leq \sigma$ and $\l_1=cf(\l_1)>\theta,$ there are functions $G_1, G_2\in V_0$ such that:

$\hspace{1.cm}$$(\a)$ $p\Vdash ``$ if $u\in [\mathcal{U}]^{\leq \sigma}$ and $\a<\l_1,$ then $\lusim{F}_1(u,\a) \subseteq G_1(u,\a)$ and $\a \leq \lusim{F}_2(u,\a) \leq$

$\hspace{1.5cm}$ $G_2(u,\a)$'',

$\hspace{1.cm}$$(\b)$ $G_1(u,\a)\in ([\mathcal{U}]^{\leq \sigma})^{V_0},$ and $G_2(u,\a)\in [\a,\l_1).$

We define $(u_\xi, \beta_\xi),$ by induction on $\xi< \theta,$ such that:

$\hspace{1.cm}$$(\gamma)$ $u_\xi\in ([\mathcal{U}]^{\leq \sigma})^{V_0},$

$\hspace{1.cm}$$(\delta)$ $\langle u_\zeta: \zeta\leq \xi   \rangle$ is $\subseteq-$increasing and continuous,


$\hspace{1.cm}$$(\epsilon)$ $\beta_\xi< \l_1$,

$\hspace{1.cm}$$(\zeta)$ $\langle \b_\zeta: \zeta \leq \xi \rangle$ is increasing and continuous,

$\hspace{1.cm}$$(\eta)$ If $\xi=\zeta+1$ and $\a\in [\b_\zeta, \l),$ then $dom(q_\a)\cap u_\xi \nsubseteq u_\zeta.$

{\bf Case 1. $\xi=0$:} Let $(u_\xi, \b_\xi)=(\emptyset, 0).$

{\bf Case 2. $\xi$ is a limit ordinal:} Let $u_\xi= \bigcup_{\zeta<\xi}u_\zeta,$ and $\b_\xi=\bigcup_{\zeta<\xi}\b_\zeta.$ Then $|u_\xi|\leq \sigma$ and $\b_\xi <\l_1$ as $|\xi|<\theta=cf(\theta) < \l_1=cf(\l_1).$

{\bf Case 3. $\xi=\zeta+1$ is a successor ordinal:} By the choice of $p$,  $p\Vdash `` u_{\zeta} \subseteq \lusim{F}_1(u_\zeta,\beta_\zeta)\in [\mathcal{U}]^{\leq \sigma}, \lusim{F}_2(u_\zeta,\beta_\zeta)\in (\beta_\zeta, \l_1)$ and there is no $\b\in [\lusim{F}_2(u_\zeta,\beta_\zeta), \l_1)$ such that $dom(q_\b)\cap \lusim{F}_1(u_\zeta,\beta_\zeta) \subseteq u_\zeta$''.

Let $u_\xi=G_1(u_\zeta, \beta_\zeta)$ and $\beta_\xi=G_2(u_\zeta, \b_\zeta)+1.$ Then by $(\a)$, 
\begin{center}
$p\Vdash `` u_\zeta \subseteq u_\xi$ and there is no $\a\in [\b_\xi, \l_1)$ such that $dom(q_\a)\cap u_\xi \subseteq u_\zeta$''.
 \end{center}
 As all parameters in the above formula are from the ground model and the sentence is absolute, it follows that for no $\a\in [\b_\xi, \l_1)$, $dom(q_\a)\cap u_\xi \subseteq u_\zeta.$

Now set $\b= \bigcup_{\xi<\theta}\b_\xi.$ Then $\b<\l_1$ as $\l_1=cf(\l_1)>\theta$ and for all $\xi<\theta, \b_\xi<\l_1.$ Also $\xi<\theta \Rightarrow dom(q_\b)\cap u_{\xi+1} \nsubseteq u_\xi.$ As $\langle u_\xi: \xi <\theta  \rangle$ is $\subseteq-$increasing, it follows that $|dom(q_\b)|\geq \theta,$ which is a contradiction.
\end{proof}
The next claim is the same as Claim 5.6
\begin{claim} (In $V_1$)
There are $p_*, S, A_*$ such that:

$\hspace{1.cm}$$(\a)$ $p_*\in \mathbb{Q}_2, p_*\leq q_*,$

$\hspace{1cm}$$(\b)$ $dom(p_*)\setminus dom(q_*) \subseteq u_*,$

$\hspace{1.cm}$$(\gamma)$ $S\subseteq \l_1$ is unbounded in $\l_1$,

$\hspace{1.cm}$$(\delta)$ If $\a\in S,$ then $A_\a=A_*$ and $q_\a \upharpoonright (dom(q_*)\cup u_*)=p_*,$

$\hspace{1.cm}$$(\epsilon)$ If $u \subseteq \mathcal{U}, |u|\leq\sigma$ and $\a<\l_1,$ then there is $\b$ such that $\a <\b\in S$ and $dom(q_\b)$

$\hspace{1.5cm}$is disjoint from $u\setminus dom(p_*).$
\end{claim}

Fix $p_*, S$ and $A_*$ as above.
For $i<\k$ let $J_i$ be the set of all $t\in I$ such that if $u\subseteq\mathcal{U}, |u|\leq \sigma$ and $\a<\l_1,$ then there is $\b$ such that:

$\hspace{2.3cm}$$(a)$ $t\leq_I f^1_\b(i),$

$(**):$$\hspace{1.5cm}$$(b)$ $\a<\b\in S,$

$\hspace{2.3cm}$$(c)$ $dom(q_\b)$ is disjoint from $u\setminus dom(p_*).$

\begin{claim}
$\langle J_i: i<\k \rangle \in V_1.$
\end{claim}
\begin{proof}
For each $i<\k, J_i\in V_1,$ so as the forcing $\mathbb{Q}_2$ is $\theta-$closed and $\theta>\k, \langle J_i: i<\k \rangle \in V_1.$
\end{proof}

We also assume w.l.o.g. that $I$ is  of cardinality $>\l_1$ and we define $g\in$$I^{\k}$ by
\begin{center}
$g(i)=$the $<_I-$least upper bound of $J_i.$
\end{center}
As $g$ is defined using the sequence $\langle J_i: i<\k \rangle$, it follows from Claim 5.16 that:
\begin{claim}
$g\in V_1.$
\end{claim}
The next claim can be proved as in Claim 5.7.
\begin{claim}
If $\a<\l_1,$ then $f^1_\a \leq_D g$.
\end{claim}

\begin{claim}
If $\a<\l_2,$ then $g \leq_D f^2_\a.$
\end{claim}
\begin{proof}
The proof is the same as the proof of Claim 5.8. We just need to choose $\b_i$ to be minimal so that the condition $q$ defined there is in $V_0$ and hence $q\in \mathbb{Q}_2.$
\end{proof}
It follows that
\begin{claim}
If $\a_1 <\l_1$ and $\a_2 < \l_2,$ then $ f^1_{\a_1} <_D g <_D f^2_{\a_2}.$
\end{claim}

Thus $g\in V_1$ is such that for all $\a_1<\l_1$ and $\a_2 <\l_2,$ $f^1_{\a_1} <_D g <_D f^2_{\a_2}$ and we get a contradiction.
Lemma 5.12 follows.
\end{proof}

\begin{theorem}
If in $V$, there is a class of supercompact cardinals, then for some class forcing $\mathbb{P},$ in $V^\mathbb{P}$ we have: for any infinite cardinal $\k$, and  any ultrafilter $D$ on $\k,$ if $(\l_1, \l_2)\in \mathscr{C}(D),$ then $\l_1 +\l_2 < 2^{2^\k}.$
\end{theorem}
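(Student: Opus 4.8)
The plan is to produce $\mathbb P$ as a reverse Easton class iteration that globalises the single forcing of Theorem~5.10, arranging that for every infinite cardinal $\kappa$ the cardinal $2^{2^{\kappa}}$ is a supercompact cardinal to which a version of that theorem applies with parameter $\kappa$. Working (after a harmless preliminary class forcing, if necessary) in a model of $GCH$ carrying a proper class of supercompact cardinals, let $\langle \mu_{i} : i\in \mathrm{Ord}\rangle$ enumerate the supercompacts increasingly, and let $\mathbb P$ be the Easton--support iteration whose only non--trivial stages are $\omega_{1}$, where one forces $\Add(\omega_{1},\mu_{0})$, and each $\mu_{i}$, where one forces $\Add(\mu_{i},\mu_{i+1})$. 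Each iterand is $\delta$--closed and $\delta^{+}$--c.c.\ for the relevant $\delta\in\{\omega_{1},\mu_{i}\}$ — one checks by a routine computation (from $GCH$ in the ground model and the sizes of the earlier iterands) that $2^{<\delta}=\delta$ and $\delta^{<\delta}=\delta$ hold in the model in which stage $\delta$ acts — so standard Easton bookkeeping shows $\mathbb P$ preserves all cardinals and cofinalities and that in $V^{\mathbb P}$: $2^{\kappa}$ equals the least non--trivial stage $\theta(\kappa)>\kappa$ of the iteration, and hence $2^{2^{\kappa}}$ equals the target $\mu(\kappa)$ of the $\Add$--forcing performed at stage $\theta(\kappa)$ (so $\mu(\omega)=\mu_{0}$, and $\mu(\kappa)=\mu_{i+1}$ whenever $\theta(\kappa)=\mu_{i}$).

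Next I would verify that each target $\mu(\kappa)$ is still supercompact at the moment it is hit. The iteration up to stage $\theta(\kappa)$ has size $\theta(\kappa)<\mu(\kappa)$, so it is negligible for $\mu(\kappa)$ by L\'evy--Solovay; the iterand $\Add(\theta(\kappa),\mu(\kappa))$ is handled by the standard Laver--style lifting of a $\lambda$--supercompactness embedding $j\colon V\to M$ with $\crit(j)=\mu(\kappa)$ for $\lambda$ large, building a master condition from the fact that the quotient $j(\Add(\theta(\kappa),\mu(\kappa)))/\Add(\theta(\kappa),\mu(\kappa))$ is $\theta(\kappa)$--closed in $M$ of small enough density; and the tail of $\mathbb P$ beyond stage $\theta(\kappa)$ is $\mu(\kappa)$--closed, hence of no consequence for supercompactness of $\mu(\kappa)$.

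The heart of the matter is the following. Fix $\kappa$ and an ultrafilter $D\in V^{\mathbb P}$ on $\kappa$, put $\theta=\theta(\kappa)$, $\mu=\mu(\kappa)=2^{2^{\kappa}}$, and suppose toward a contradiction that some $(\lambda_{1},\lambda_{2})\in\mathscr C(D)$ has $\lambda_{2}\ge\lambda_{1}$ and $\lambda_{1}+\lambda_{2}\ge\mu$. Factor $\mathbb P$ as $\mathbb P_{\theta}\ast \Add(\theta,\mu)\ast \dot{\mathbb P}_{>\theta}$, where $\mathbb P_{\theta}$ is $\theta$--c.c.\ of size $\le 2^{<\theta}=\theta$ and $\dot{\mathbb P}_{>\theta}$ is $\theta$--closed; thus $\dot{\mathbb P}_{>\theta}$ adds no new linear order $I$ of the relevant kind, no new functions $\kappa\to I$, and does not change $D$, so the cut data lives in $V^{\mathbb P_{\theta}\ast\Add(\theta,\mu)}$ and the statement $\maltese$ is inert under the tail. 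Now argue as in the proof of Theorem~5.10: take the $\lambda_{2}$--supercompactness embedding $j\colon V\to M$ with $\crit(j)=\mu$ and lift it through $\mathbb P_{\theta}\ast\Add(\theta,\mu)$. Lemma~5.12 — played with $\mathbb P_{\theta}$ in the role of the auxiliary $\theta$--c.c.\ forcing, $\Add(\theta,\mu)$ as $\mathbb Q_{1}$, and $j(\Add(\theta,\mu))=\Add(\theta,j(\mu))\gtrdot\Add(\theta,\mu)$ as $\mathbb Q_{2}$ — shows $\maltese$ is forced over $\mathbb P_{\theta}\times\Add(\theta,j(\mu))$, while elementarity of $j$ shows $j(\maltese)$ is forced there as well; picking $\delta$ with $\sup j[\lambda_{2}]<\delta<j(\lambda_{2})$, the element $j(\bar f^{2})_{\delta}$ fills the shorter pre--cut $\langle\langle j(f^{1}_{\alpha}):\alpha<\lambda_{1}\rangle,\langle j(f^{2}_{\gamma}):\gamma<\lambda_{2}\rangle\rangle$ in $j(I)^{\kappa}/D$, contradicting the pre--cut clause of $\maltese$. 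Hence in $V^{\mathbb P}$ no such pair exists, i.e.\ $\lambda_{1}+\lambda_{2}<\mu=2^{2^{\kappa}}$, which is the theorem.

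The main obstacle is the meshing of this local argument with the class iteration, concentrated in the lifting step: one must verify that $j$ can be lifted through the relevant initial segment of $\mathbb P$ (so that the targeted supercompacts survive and $M$ carries the appropriate version of $j(\mathbb P)$), which is the familiar delicate ingredient of reverse Easton arguments and here requires care that $j(\mathbb P)$ restricted beyond stage $\theta$ be generated over $M$ compatibly with the generic already chosen — precisely the point at which the high closure of $\dot{\mathbb P}_{>\theta}$ and the $\theta$--c.c.\ of $\mathbb P_{\theta}$ enter, matching the hypotheses of Lemma~5.12. By comparison, the cardinal--arithmetic bookkeeping, the closure and chain--condition checks for the iterands, and the final extraction of the contradictory bounding function are routine given Theorem~5.10, Lemma~5.12, and standard Easton--iteration technology.
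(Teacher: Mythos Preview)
Your overall strategy is sound and leads to the same conclusion, but it differs from the paper's proof in two coupled technical choices, and one of them creates a gap you should address explicitly.

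\textbf{How the paper proceeds.} After a preliminary class forcing the paper arranges a proper class $C$ of supercompacts that are \emph{Laver indestructible} (and satisfy $2^{\kappa}=\kappa^{+}$, with no limit of $C$ inaccessible). It then takes the Easton--support \emph{product} $\mathbb{Q}=\prod_{i}\Add(\kappa_{i},\kappa_{i+1})$, not an iteration. For a given $\kappa$ one writes $\mathbb{Q}=\mathbb{Q}_{<i+1}\times\mathbb{Q}_{>i}$ and, using commutativity of the product, forces the \emph{tail first}: set $V_{0}=V[G_{>i}]$. Since $\mathbb{Q}_{>i}$ is $\kappa_{i+1}$--directed closed, Laver indestructibility gives that $\kappa_{i+1}$ is still supercompact in $V_{0}$, and now $\mathbb{Q}_{<i}\times\mathbb{Q}_{i}$ is literally of the form $\mathbb{P}\times\Add(\theta,\mu)_{V_{0}}$ required by Lemma~5.12. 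No lifting through Cohen forcing is needed; the embedding $j$ is simply taken in $V_{0}$.

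\textbf{What you do differently.} You use a reverse Easton \emph{iteration} and preserve supercompactness by an explicit master--condition lifting rather than by indestructibility; and you handle the tail \emph{last} (arguing it is highly closed and hence inert for the cut data) rather than first. Both moves are reasonable, and the tail argument is fine: since $\dot{\mathbb{P}}_{>\theta}$ is $\mu$--closed it adds no new $h\in I^{\kappa}$, so an unfilled pre--cut in $V^{\mathbb{P}}$ is already unfilled in the intermediate model.

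\textbf{The gap.} Lemma~5.12 is stated for $\mathbb{R}_{l}=\mathbb{P}\times\Add(\theta,\mathcal{U}_{l})_{V_{0}}$, i.e.\ the Cohen factor is the \emph{ground--model} Cohen forcing and the combination is a \emph{product}. In your iteration the stage--$\theta$ iterand is $\Add(\theta,\mu)$ \emph{computed in} $V^{\mathbb{P}_{\theta}}$, and since $\mathbb{P}_{\theta}$ (e.g.\ $\Add(\omega_{1},\mu_{0})$) adds new subsets of $\mu$ of size $<\theta$, this is genuinely larger than $\Add(\theta,\mu)^{V}$. So you cannot invoke Lemma~5.12 as written with $V_{0}=V$. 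Two clean repairs are available: either (i) replace the iteration by an Easton product and use Laver indestructibility exactly as the paper does, or (ii) keep the iteration but take $V_{0}=V^{\mathbb{P}_{\theta}}$: then $\mu$ is supercompact in $V_{0}$ by L\'evy--Solovay, $\theta^{<\theta}=\theta$ still holds there, and you may apply Theorem~5.10 (not Lemma~5.12) directly in $V_{0}$, dispensing with the auxiliary $\theta$--c.c.\ factor altogether. Either way the master--condition lifting you allude to becomes unnecessary for the core contradiction, which is perhaps the main economy the paper's arrangement buys.
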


\begin{proof}
By a preliminary forcing (see \cite{laver}), we can assume that the following hold in $V$, for some proper class $C$ of cardinals:

$\hspace{1.cm}$$(\a)$ Each $\k\in C$ is a supercompact cardinal,

$\hspace{1.cm}$$(\b)$ No limit point of $C$ is an inaccessible cardinal,

$\hspace{1.cm}$$(\gamma)$ $\k\in C \Rightarrow \k$ is Laver indestructible,

$\hspace{1.cm}$$(\delta)$ $\k\in C \Rightarrow 2^\k=\k^+.$

We choose cardinals $\k_i, i\in Ord,$ by induction on $i$ as follows:

{\bf Case 1. $i=0$:} Let $\k_0=\aleph_0,$

{\bf Case 2. $i$ is a limit ordinal:} Let $\k_i=\bigcup_{j<i}\k_j,$

{\bf Case 3. $i=j+1$ is a successor ordinal, $\k_j$ is $\aleph_0$ or a supercompact cardinal:} Let $\k_i=\k_j^+,$

{\bf Case 4.  $i=j+1$ is a successor ordinal and case 3 does not hold:} Let $\k_i$ be the minimal element of $C$ above $\k_j.$

Note that by  $(\b),$ $\k_i$ is a singular cardinal iff $i$ is a limit ordinal.

Let $\mathbb{Q}_i$ be $Add(\k_i, \k_{i+1}),$ if $\k_i$ is regular, and the trivial forcing otherwise.
 Let $\mathbb{Q}$ be the Easton support product of $\langle \mathbb{Q}_i: i\in Ord \rangle,$ and let $\mathbb{Q}_{<j}$ and  $\mathbb{Q}_{>j}$ be defined similarly for $\langle \mathbb{Q}_i: i<j \rangle$ and $\langle \mathbb{Q}_i: i>j \rangle$ respectively. By standard forcing arguments we have:
\begin{claim} Let $G$ be $\mathbb{Q}-$generic over $V$, and for each ordinal $i$ set $G_{<i}=G\cap \mathbb{Q}_{<i}$ and $G_{>i}=G\cap \mathbb{Q}_{>i}.$ Then:

$(a)$ $V$ and $V[G]$ have the same cardinals,

$(b)$ If $\l < \k_i,$ then $P(\l)^{V[G]}=P(\l)^{V[G_{<i}]},$

$(c)$ If $\k_i$ is regular in $V$, then $|\mathbb{Q}_{<i}| \leq \k_i$, and in $V[G], \k_i^{<\k_i}=\k_i$ and $2^{\k_i}=\k_{i+1}.$
\end{claim}
In $V[G]$, let $\k$ be an infinite cardinal and let $D$ be an ultrafilter on $\k.$ Let $i$ be the least ordinal such that $\k <\k_i.$ Then $i=j+1$ is a successor ordinal, and we have $2^{2^\k}=2^{2^{\k_j}}=2^{\k_i}=\k_{i+1},$ so it suffices to prove the following:
\begin{claim}
(In $V[G])$  $(\l_1, \l_2)\in \mathscr{C}(D) \Rightarrow \l_1+\l_2 <\k_{i+1}.$
\end{claim}
\begin{proof}
Write $\mathbb{Q}=\mathbb{Q}_{<i+1} \times \mathbb{Q}_{>i}.$ The forcing $\mathbb{Q}_{>i}$ is $\k_{i+1}-$directed closed, so by $(\gamma), \k_{i+1}$ remains supercompact in $V[G_{>i}].$ Let $V_0=V[G_{>i}]$. Note that:

$\hspace{1.cm}$$(d)$ $\mathbb{Q}_i=\mathbb{Q}_{V_0}$ and $\mathbb{Q}_{<i}=(\mathbb{Q}_{<i})_{V_0},$

$\hspace{1.cm}$$(e)$ $V_0 \models \mathbb{Q}_{<i}$ is $\k_i-c.c.$ of size $\leq \k_i.$

The rest of the argument is essentially the same as the proof of Theorem 5.10, using Lemma 5.12 instead of Lemma 5.1. So toward contradiction assume that in $V[G],$ $(\l_1, \l_2)\in \mathscr{C}(D)$ is such that $\l_1+\l_2 \geq \k_{i+1}.$
Let $I$ be a $(\l_1+\l_2)^+-$saturated dense linear order and let $(\bar{f}^1/D, \bar{f}^2)/D$ witness a pre-cut of $I^\k/D$ of cofinality $(\l_1, \l_2)$, where $\bar{f}^l/D= \langle f^l_\a/D: \a <\l_l \rangle, l=1,2.$ We may assume that the set of elements of $I$ is in $V.$ From now on we work in $V_0$. Set $\mathbb{P}=\mathbb{Q}_{<i}$ and $\mu=\k_{i+1}.$

We also suppose that $\Vdash^{V_0}_{\mathbb{P}\times\mathbb{Q}_i} ``\maltese$'', where

$\hspace{2.cm}$ $\lusim{I}$ is a linear order, $\lusim{D}$ is an ultrafilter on $\k$ and $(\lusim{\bar{f}}^1/\lusim{D}, \lusim{\bar{f}}^2/\lusim{D})$

$(\maltese):$$\hspace{1.3cm}$ represents a $(\l_1, \l_2)-$pre-cut in $\lusim{I}^\k /\lusim{D}$ which is also a pre-cut

$\hspace{2.cm}$ in $J^\k/\lusim{D}$ for each linear order  $J \supseteq \lusim{I}$'',

and $\lusim{I}, \lusim{\bar{f}^l}\in V_0, l=1,2$ represent $\mathbb{P}\times \mathbb{Q}_i-$names for $I, \bar{f^l}, l=1,2$ respectively (over $V_0$).

 Let $\l=\l_1+\l_2$  so that $\l\geq\mu$ is regular. Let $j: V_0  \rightarrow M_0$ be an elementary embedding witnessing the $\l-$supercompactness of $\mu$; so  that $crit(j)=\mu, j(\mu)>\l$ and $M_0^\l \subseteq M_0.$

Since
 \begin{center}
 $V_0 \models \Vdash_{\mathbb{P}\times\mathbb{Q}_i}``\maltese$'',
 \end{center}
and since $j$ is an elementary embedding and $j(\mathbb{P})=\mathbb{P},$ we have

$(*)$ $\hspace{4.2cm}$ $M_0 \models \Vdash_{\mathbb{P}\times j(\mathbb{Q}_i)}``j(\maltese)$'',

where

$\hspace{2.1cm}$ $\lusim{j(I)}$ is a linear order, $\lusim{D}$ is an ultrafilter on $\k$ and $(j(\lusim{\bar{f}}^1)/\lusim{D}, j(\lusim{\bar{f}}^2)/\lusim{D})$

$(j(\maltese)):$$\hspace{.9cm}$ represents a $(j(\l_1), j(\l_2))-$pre-cut in $\lusim{j(I)}^\k /\lusim{D}$ which is also a pre-cut

$\hspace{2.1cm}$ in $J^\k/\lusim{D}$ for each linear order  $J \supseteq \lusim{j(I)}$'',

On the other hand, it follows from Lemma 5.12 that  $\Vdash_{\mathbb{P}\times j(\mathbb{Q}_i)}``\maltese$'', and hence

$(**)$ $\hspace{4.2cm}$ $M_0\models \Vdash_{\mathbb{P}\times j(\mathbb{Q}_i)}``\maltese$''.

From $(*)$ and $(**)$ we can get the required contradiction as in the proof of Theorem 5.10. The claim follows.
\end{proof}
Theorem 5.21 follows.
\end{proof}

\subsection*{Acknowledgements}
The authors thank the referee of the paper for many helpful comments and corrections.

\begin{center}

\end{center}

School of Mathematics, Institute for Research in Fundamental Sciences (IPM), P.O. Box:
19395-5746, Tehran-Iran.

E-mail address: golshani.m@gmail.com

\begin{center}

\end{center}

Einstein Institute of Mathematics,  The Hebrew University
of Jerusalem, Jerusalem, 91904, Israel, and Department of Mathematics, Rutgers University, New
Brunswick, NJ 08854, USA.

E-mail address: shelah@math.huji.ac.il

\end{document}